\numberwithin{equation}{section}
\newcommand{\mr}{\mathbf{r}}
\newcommand{\bu}{\mathbf{u}}
\newtheorem{cor}{Corollary}[section]
\newtheorem{corr}{Corollary}
\newtheorem{lem}{Lemma}[section]
\newtheorem{prop}{Proposition}[section]
\newtheorem{propp}{Proposition}
\newtheorem*{rmk}{Remark}
\newtheorem{thm}{Theorem}[section]
\newtheorem*{thm*}{Theorem}
\newtheorem{thmm}{Theorem}
\newtheorem{conj}{Conjecture}
\begin{document}

\date{\today}
\title{Long large character sums}
\author{Crystel Bujold}
\address{Département de mathématiques et de statistique, Université de Montràal, CP 6128
succ. Centre-Ville, Montréal, QC H3C 3J7, Canada}
\email{bujoldc@dms.umontreal.ca}
\keywords{Dirichlet characters; Long character sums; Lattices}
\subjclass{11N56 ; 11N06}

\begin{abstract} 
In this paper, we prove a lower bound for $\underset{\chi \neq \chi_0}{\max}\bigg|\sum_{n\leq x} \chi(n)\bigg|$, when $x= \frac{q}{(\log q)^B}$. This improves on a result of Granville and Soundararajan for large character sums when the range of summation is wide. When $B$ goes to zero, our lower bound recovers the expected maximal value of character sums for most characters.

\end{abstract}
\maketitle

 \section{Introduction}
Since their introduction in 1837, Dirichlet characters have played an important role in understanding questions about primes and integers.  As is often the case with multiplicative functions we would like to understand their mean value, in this case, the growth of character sums of the form
\begin{equation}\label{sum chi}
    \sum_{n\leq x} \chi(n),
\end{equation}
where $x$ is a positive real number and $\chi$ is a character modulo an integer $q$. 
The best unconditional  upper bound for (\ref{sum chi}) is given by the P\'olya-Vinogradov inequality (1918):
   \begin{equation*}
    \sum_{n\leq x} \chi(n) \ll \sqrt{q}\log q.
 \end{equation*}
 Montgomery and Vaughan improved this, under the assumption of the  generalized Riemann hypothesis, to
  \begin{equation}\label{M-V}
     \sum_{n\leq x}\chi(n) \ll \sqrt{q}\log\log q.
 \end{equation}
This is best possible (up to the value of the constant), and indeed  Granville and Soundararajan \cite{LargeGS2} proved that 
for any large prime $q$ and   $\theta\in(-\pi,\pi]$ there are  $>q^{1-\frac{c}{(\log\log q)^2}}$ odd characters $\pmod q$ for which 
\begin{equation*}
    \sum_{n\leq x} \chi (n) = e^{i\theta}\frac{e^{\gamma}}{\pi}\sqrt{q}\log\log q +O\left((\log\log q)^{\frac{1}{2}}\right)
\end{equation*}
for almost all  $x\leq q$, where  $\gamma$ is the Euler-Mascheroni constant. In particular this implies that  there are a lot of characters for which
\begin{equation} \label{eq: gamma}
    \underset{x\leq q}{\max}\bigg|\sum_{n\leq x} \chi(n) \bigg| \geq \left(\frac{e^{\gamma}}{\pi}+o(1)\right)\sqrt{q}\log\log q.
    \end{equation}

It is also of interest to understand the behaviour of 
\begin{equation}\label{max chi}
    \underset{\chi \neq \chi_0}{\max}\bigg|\sum_{n\leq x} \chi(n)\bigg|,
\end{equation}
for different values of  $x$. Granville and Soundararajan \cite{largeGS} established lower bounds of (\ref{max chi}) for $x$ covering all ranges up to $q$. For example for small  $x$, they proved that  for any fixed $B>0$
\begin{equation*}
   \max_{\chi \neq \chi_0 }   \bigg|\sum_{n\leq (\log q)^B} \chi(n)\bigg|\gg x \rho(B),
\end{equation*}
where $\rho(B)$ is the Dickman-De Bruijn function.  This is defined by $\rho(u)=1$ for $0\leq u\leq 1$, and $\rho(u)=\frac 1u\int_{u-1}^u \rho(t) dt$ for all $u>1$.

On the other hand, for large $x$ Granville and Soundararajan proved that
\begin{equation}\label{bound 1}
    \underset{\chi \neq \chi_0}{\max}\bigg|\sum_{n\leq \frac{q}{(\log q)^B}}\chi(n) \bigg|\gg \frac{\sqrt{q}}{(\log q)^{\frac{B}{2}-o(1)}}.
\end{equation}
Here we improve this result to the following:
 
\begin{thmm}\label{big thm}
    Let Q be a large integer, for all but at most $Q^{\frac{1}{10}}$ primes $q\leq Q$, if  $0\leq  B <\frac{\log \log \log q}{\log \log \log \log q}$, then
 \begin{equation*}
     \max_{\chi \neq \chi_0 }\bigg|\sum_{n\leq \frac{q}{(\log q)^B}}\chi (n)\bigg|\geq  \frac{1}{\pi}\int_B^\infty \rho(u) du\cdot \sqrt{q} \log \log q +  O(\sqrt{q}\log \log \log q) . 
 \end{equation*}
 \end{thmm}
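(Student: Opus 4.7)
The plan is to apply Pólya's Fourier expansion to the character sum, pick a character that makes the main Fourier coefficients large, and evaluate the resulting smooth-number sum in terms of the Dickman function $\rho$. For a primitive odd $\chi$ mod $q$, Pólya gives
\[
\sum_{n\leq x}\chi(n)=\frac{\tau(\chi)}{\pi i}\sum_{1\leq n\leq N}\frac{\overline{\chi}(n)\bigl(1-\cos(2\pi n\alpha)\bigr)}{n}+O\!\left(\frac{q\log q}{N}\right),
\]
with $\alpha=x/q=(\log q)^{-B}$, and an analogous expression using $\sin$ for even $\chi$. Since $|\tau(\chi)|=\sqrt{q}$, the target lower bound is equivalent to producing $\chi$ for which the inner Dirichlet sum $S(\chi)=\sum_{n}\overline{\chi}(n)\bigl(1-\cos(2\pi n\alpha)\bigr)/n$ has absolute value at least $(\log\log q)\int_{B}^{\infty}\rho(u)\,du + O(\log\log\log q)$.

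Because $1-\cos(2\pi n\alpha)\geq 0$ and the kernel is appreciable only for $n\gtrsim 1/\alpha=(\log q)^{B}$, I would choose $\chi$ so that $\overline{\chi}(n)\approx 1$ on $y$-smooth integers $n$, with $y=\log q$. Writing $\chi=\chi_{g}^{k}$ for a primitive root $g$ mod $q$, this becomes the simultaneous Diophantine approximation problem of making $k\cdot\mathrm{ind}_{g}(p)$ small modulo $q-1$ for every prime $p\leq y$. I would solve it by Minkowski's theorem applied to the rank-$(\pi(y)+1)$ lattice spanned by the vectors of indices, which produces a nontrivial $k$ with $\chi(p)=1+O(1/\log q)$ for all $p\leq y$, hence $\overline{\chi}(n)=1+O(\Omega(n)/\log q)$ on $y$-smooth $n$. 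The construction succeeds provided the index vectors of small primes span a sufficiently generic sublattice of $\mathbb{Z}/(q-1)\mathbb{Z}$; a counting argument should show this fails for at most $O(Q^{1/10})$ primes $q\leq Q$, explaining the exceptional set in the statement.

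With such a $\chi$ in hand, I would split $S(\chi)$ into $y$-smooth and non-$y$-smooth contributions. On the $y$-smooth part, $\overline{\chi}(n)\approx 1$, and partial summation against $\Psi(t,y)\sim t\rho(\log t/\log y)$ yields
\[
\sum_{\substack{n\geq 1/\alpha\\ P^{+}(n)\leq y}}\frac{1-\cos(2\pi n\alpha)}{n}\sim \int_{1/\alpha}^{\infty}\rho\!\left(\frac{\log t}{\log y}\right)\frac{dt}{t}=\log y\int_{B}^{\infty}\rho(u)\,du,
\]
after the substitution $u=\log t/\log y$, using $\log(1/\alpha)/\log y=B$; the regime $n\ll 1/\alpha$ contributes $O(1)$ via $1-\cos(2\pi n\alpha)\ll (n\alpha)^{2}$. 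Since $\log y=\log\log q$, multiplying by $\sqrt{q}/\pi$ produces the main term of the theorem. The contribution of $n$ with a prime factor $>y$ is handled by factoring $n=mn'$ with $m$ $y$-smooth and $P^{-}(n')>y$, then bounding $\sum_{n'}\overline{\chi}(n')/n'$ using pretentious-type estimates to obtain an error of size $O(\log\log\log q)$.

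The main obstacle will be the construction step together with the non-smooth error. The Minkowski argument needs enough quantitative uniformity to bound the exceptional set of $q$'s by $Q^{1/10}$, which requires genuine control of the lattice of indices of small primes modulo $q-1$—this is where the paper's lattice-theoretic tools enter. The non-smooth bound is also delicate: a trivial estimate costs an extra factor of $\log\log q$ instead of $\log\log\log q$, so cancellation in $\sum_{n'}\overline{\chi}(n')/n'$ must be exploited through pretentious distances à la Granville–Soundararajan. Combining these two inputs with the Dickman evaluation above should yield the theorem with the claimed main term and error.
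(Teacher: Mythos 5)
Your overall skeleton (Pólya expansion, a $1$-pretentious character on primes up to $y=\log q$, Dickman evaluation of the smooth logarithmic sum) matches the paper, but two essential steps are missing or would fail as described. First, your construction step ignores the parity of the multiplier: Minkowski/pigeonhole applied to the index lattice produces \emph{some} nonzero $k$ with $k\cdot\mathrm{ind}_g(p)$ small mod $q-1$ for all $p\leq y$, but nothing prevents every such $k$ from being even, i.e.\ every character you find being even. For even $\chi$ the two halves of the Pólya sum cancel the main term $S_1$ and you only recover the much weaker $\tfrac{\rho(B)}{2}\sqrt q$ bound (the paper's Theorem 2); the full main term $\tfrac1\pi\sqrt q\log\log q\int_B^\infty\rho$ requires an \emph{odd} character. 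Producing odd multipliers is exactly the hard lattice problem the paper solves with a Poisson-summation/bump-function argument (Proposition 3.1 and Theorem 3), and the exceptional set of at most $Q^{1/10}$ primes comes precisely from ruling out small integer vectors $\mathbf r$ with $2(\mathbf r\cdot V_\chi)\equiv 0\pmod 1$ (Lemma 4.6), not from a vague ``genericity'' of the index lattice. Your proposal never confronts this, and with it goes the entire explanation of the hypothesis ``for all but $Q^{1/10}$ primes''.

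Second, your plan to bound the rough part $\sum_{P(n)>y}\overline\chi(n)(1-e(-\alpha n))/n$ for the single constructed character by ``pretentious-type estimates'' is not available: pretentiousness of $\chi$ on primes up to $\log q$ says nothing about primes beyond $\log q$, and an $O(\log\log\log q)$ bound for an individual character is essentially the paper's open Conjecture 1. The paper circumvents this by producing \emph{many} ($\gg q^{1-o(1)}$) pretentious characters — this is why it takes the weaker quality $1/N=1/\log\log q$ on only $\pi(T)$ primes with $T=y/(4\log y)$ — and then intersecting with the set $A_\delta$ of characters whose rough contribution is small, whose complement is bounded by the Granville–Soundararajan moment estimate \eqref{thm 4.2}; a single Minkowski character cannot be fed into that counting argument. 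A smaller but real further gap: the oscillating part of your smooth sum, $\sum_{n\,y\text{-smooth}}\cos(2\pi n\alpha)/n$ over the long range $n\gg y^B$, cannot be controlled by partial summation against $\Psi(t,y)\sim t\rho(u)$ alone (the error term in $\Psi$ is not summable against the derivative of the kernel over an infinite range); one needs genuine cancellation in exponential sums over smooth numbers, which the paper imports from de la Bretèche, together with the head/tail analysis that produces the constant $\rho(B)(\gamma+\log 2\pi\mp i\pi/2)$.
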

 
Since $\int_0^\infty \rho(u) du =e^\gamma$ (see \cite{frequency} Lemma 3.3), letting $B$ go to zero recovers \eqref{eq: gamma}. We also believe that Theorem \ref{big thm} should hold for all prime moduli $q$, but we were unable to prove this due to a  the limitation in our Fourier analysis argument. The large character sums in Theorem \ref{big thm} all  arise from odd characters. For even characters we can obtain the following weaker bound,  for all prime moduli $q$.

\begin{thmm}\label{thm even}
    Let q be a large prime and let $1\leq  B <\frac{\log \log \log q}{\log \log \log \log q}$, then\begin{equation*}
    \max_{\substack{\chi \neq \chi_0\\\chi \text{ even}}} \bigg|\sum_{n\leq \frac{q}{(\log q)^B}}\chi(n)\bigg|\geq \frac{\rho(B)}{2}\sqrt{q} + O\left(\frac{\rho(B)\log(B+1) \sqrt{q}(\log\log\log q)^2}{\log \log q}\right).
\end{equation*}
\end{thmm}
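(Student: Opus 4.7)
The plan is to start from the Pólya--Vinogradov expansion for an \emph{even} primitive character: combining the $\pm m$ terms using $\chi(-m) = \chi(m)$ yields
\[
\sum_{n \leq x}\chi(n) = \frac{\tau(\chi)}{\pi}\sum_{1 \leq m \leq N} \frac{\bar\chi(m)\sin(2\pi m\alpha)}{m} + O\!\bigg(\frac{\sqrt{q}\log q}{N}\bigg),
\qquad \alpha = \frac{x}{q} = \frac{1}{(\log q)^B}.
\]
Taking $N$ of size $q$ makes the error $O(\sqrt{q}/\log q)$, so the task reduces to exhibiting a non-principal even $\chi$ whose trigonometric sum has absolute value at least $\tfrac{\pi}{2}\rho(B) + O(\text{admissible})$; since $|\tau(\chi)| = \sqrt{q}$, this yields the $\rho(B)\sqrt{q}/2$ main term.

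The key analytic input is an asymptotic for the smooth restriction of the trigonometric sum. With smoothness parameter $y = \log q$, partial summation with the de Bruijn estimate $\Psi(t,y) \sim t\,\rho(\log t/\log y)$ and the classical identity $\sum_{m \geq 1}\sin(2\pi m\alpha)/m = \pi(\tfrac12-\alpha)$ should yield
\[
\sum_{\substack{m \leq N \\ m\text{ is }y\text{-smooth}}}\frac{\sin(2\pi m\alpha)}{m} = \frac{\pi}{2}\rho(B) + O\!\bigg(\frac{\rho(B)\log(B+1)(\log\log\log q)^2}{\log\log q}\bigg).
\]
This accounts for both the stated main term and the error. The factor $\log(B+1)$ arises via the Dickman differential relation $\rho'(u) = -\rho(u-1)/u$, and $(\log\log\log q)^2/\log\log q$ from quantifying the rate at which $u = \log t/\log y$ converges to $B$ after the substitution $s = 2\pi t\alpha$.

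To realize this identity on an actual character, I would use the lattice approach of Granville--Soundararajan: parametrizing even characters by the index-two subgroup of the dual of $(\mathbb{Z}/q)^*$, and applying Minkowski's theorem in the $\pi(y) \asymp \log q/\log\log q$-dimensional lattice $\{(\arg\chi(p)/2\pi)\}_{p \leq y}$ produces a non-principal even $\chi$ satisfying $\chi(p) = 1 + O(1/\log q)$ for every prime $p \leq y$. Multiplicativity and $\Omega(m) \ll \log q/\log\log q$ then give $\bar\chi(m) = 1 + o(1)$ uniformly on $y$-smooth $m \leq N$, so the smooth part of the trigonometric sum equals the display above. The rough part is controlled using the trivial bound $|\sin(2\pi m\alpha)/m| \leq 2\pi\alpha$ for $m \leq 1/(2\alpha)$ and partial summation for larger $m$, exploiting that $y$-rough integers have relative density $1-\rho(B)$.

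The main obstacle is synchronizing the three error sources---the Pólya truncation, the smooth-sum asymptotic, and the multiplicative drift $\bar\chi(m) - 1$ on smooth integers---inside the tight window $\rho(B)\log(B+1)\sqrt{q}(\log\log\log q)^2/\log\log q$. Each depends on the choice of $y$ and $N$ in a different way, so the final calibration of parameters must be performed carefully; the hypothesis $B < \log\log\log q/\log\log\log\log q$ is essentially dictated by making $\log(B+1)(\log\log\log q)^2 = o(\log\log q)$, which is precisely what is needed for the error term to be a genuine improvement on the main term.
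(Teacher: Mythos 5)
Your skeleton matches the paper's in outline (Pólya expansion, a $\rho(B)$ density factor from the smooth integers near $1/\alpha$, the constant $\pi/2$ from the full sine sum, and a $1$-pretentious even character from a pigeonhole argument on the argument lattice), but there is a genuine gap at the step where you dismiss the rough part. The sum $\sum_{m\leq N,\,P(m)>y}\bar\chi(m)\sin(2\pi m\alpha)/m$ cannot be handled by the trivial bound plus ``partial summation for larger $m$'': for $m>1/\alpha$ there is no a priori cancellation to sum by parts against (bounding $\sum_{m\le t,\,P(m)>y}\bar\chi(m)e(\pm\alpha m)$ nontrivially for the specific character you have constructed is exactly the hard problem), the absolute-value bound gives a contribution of size $\log q$, and the density of rough integers in this range is close to $1$, not small, so no saving of size $\rho(B)(\log\log\log q)^2/\log\log q$ — or even $o(\rho(B))$ — comes out. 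Indeed, the assertion that this rough logarithmic exponential sum is $\ll 1$ for every non-principal character is precisely Conjecture \ref{conjj} of the paper, which is open. The paper circumvents this by quoting Theorem 4.2 of Granville--Soundararajan (inequality (\ref{thm 4.2})): for all but a small exceptional set of characters the rough part is at most $e^{\gamma}\delta$ with $\delta\asymp \log\log\log q/\sqrt{\log\log q}$, and it must then verify (Corollary \ref{intersection}, via the counts in Propositions \ref{big set} and \ref{size Apm}) that this set $A_\delta$ actually intersects the set of $1$-pretentious even characters. Your proposal contains no substitute for this averaging-over-characters input.

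A second, smaller but real, problem is the pretentiousness you demand: $\chi(p)=1+O(1/\log q)$ for \emph{all} $p\le y=\log q$ in dimension $k=\pi(\log q)\sim \log q/\log\log q$ would require about $(\log q)^{k}\approx q$ boxes, so the pigeonhole/Minkowski count leaves essentially no characters (and none after restricting to the even subgroup); the pointwise bound $\bar\chi(m)=1+o(1)$ on all smooth $m\le z$ is therefore not obtainable this way. The paper only secures $|\chi(p)-1|\ll 1/\log\log q$ for $p\le T=\frac{\log q}{4\log\log q}$ (Proposition \ref{big set} with $N=\log\log q$), and transfers this to the sums it needs not pointwise but through logarithmic averaging: Proposition \ref{switch to f}, based on $|\chi(n)-1|\le\sum_{p^k\mid n}|\chi(p)-1|$, replaces $\chi$ by $1$ in the smooth logarithmic sums at an acceptable cost $h(T)\log y$, and Lemmas \ref{remove smooth}, \ref{head}, \ref{tail} and \ref{main constant} then produce the $\tfrac{\pi}{2}\rho(B)$ main term you anticipate. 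So your main term computation is in the right spirit, but both the construction of the character and, above all, the treatment of the rough range need the paper's machinery rather than the shortcuts proposed.
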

The case $B<1$ was excluded from Theorem \ref{thm even} as the focus of this work is Theorem \ref{big thm}, but this could be worked out with extra technicalities.

The Dickman-De Bruijn $\rho$-function appears in both Theorem \ref{thm even} and (\ref{bound 1})
since it counts smooth numbers, and in  both of these ranges ranges, most of the contribution to the character sums comes from the smooth summands.

\begin{conj}\label{conjj}
Let $q$ be large and let $\chi$ be a non-principal character modulo q. If $\log q \leq y\leq x$ 
\begin{equation*}
      \underset{\alpha \in [0,1]}{\max} \bigg|\sum_{\substack{n\leq x \\ P(n)>y}}\frac{\chi(n)}{n}e(\alpha n)\bigg| \ll 1.
\end{equation*}
\end{conj}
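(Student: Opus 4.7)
\medskip
\noindent\textbf{Proof strategy for Conjecture \ref{conjj}.} The plan is to decompose the rough-number sum by the largest prime factor of $n$ and reduce matters to a uniform estimate for prime exponential character sums. Writing each $n$ with $P(n)>y$ uniquely as $n=pm$ with $p=P(n)$ and $P(m)\leq p$, multiplicativity of $\chi$ yields
\begin{equation*}
\sum_{\substack{n\leq x \\ P(n)>y}} \frac{\chi(n)}{n}\,e(\alpha n) \;=\; \sum_{y<p\leq x}\frac{\chi(p)}{p}\sum_{\substack{m\leq x/p \\ P(m)\leq p}}\frac{\chi(m)}{m}\,e(\alpha pm).
\end{equation*}
The inner sum over $m$ is an exponential character sum over $p$-smooth numbers with frequency $\alpha p$: when $\alpha p$ is small modulo $1$ it behaves like a truncated Euler product for $\chi$, while for generic $p$ one expects cancellation.

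\medskip
The central auxiliary estimate I would aim to establish is the uniform bound
\begin{equation*}
\Bigl|\sum_{y<p\leq z}\frac{\chi(p)\,e(\beta p)}{p}\Bigr|\;\ll\;1, \qquad \beta\in[0,1],\ \log q\leq y\leq z\leq q,
\end{equation*}
for every non-principal $\chi\pmod q$, where $\beta$ plays the role of $\alpha m$ in the double sum above. By partial summation this follows from $\sum_{p\leq z}\chi(p)e(\beta p)\ll z/(\log q)^A$ for some $A>1$. For $z \geq (\log q)^C$ with $C$ sufficiently large this is obtainable by Vaughan's identity, splitting into Type I and Type II ranges and handling the Type I contribution by Siegel--Walfisz and the Type II contribution by the multiplicative large sieve, uniformly in $\beta$. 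Once this is in hand, one dyadically decomposes the outer $p$-sum, applies Cauchy--Schwarz in $m$, and handles the remaining smooth sums by the techniques developed for Theorem~\ref{thm even}; the $y$-smooth contribution is dealt with separately by subtraction and by the Euler product for $\chi$ over primes up to $y$.

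\medskip
\noindent The principal obstacle is the regime $y\asymp\log q$. The prime sum $\sum_{p\leq y}\chi(p)e(\beta p)$ then has length only $\log q$, and no unconditional cancellation over the trivial bound is currently known uniformly in $\chi$ and $\beta$ -- this is in the same difficulty class as obtaining cancellation in short character sums themselves. A pretentious dichotomy in the sense of Granville--Soundararajan appears unavoidable: either $\chi\cdot e(\beta\cdot)$ pretends to be the trivial function, in which case one must identify and subtract a main term that happens itself to be $O(1)$, or it does not pretend and a Hal\'asz-type inequality supplies cancellation. Making this dichotomy quantitatively sharp enough to deliver $O(1)$ rather than a logarithmic power is the step that appears to lie beyond current technology, which is why the statement is offered as a conjecture.
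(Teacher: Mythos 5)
The statement you were asked about is not proved in the paper at all: it is Conjecture 1, stated only so the author can remark that, if true, it would make the inequalities in Theorems 1 and 2 asymptotic equalities. So there is no paper proof to compare against, and your submission, by its own admission, is not a proof either: you explicitly leave the decisive step (the pretentious dichotomy sharp enough to give $O(1)$ rather than a power of $\log$) as "beyond current technology". That is the honest assessment, and it matches the paper's treatment of the statement as open; but it means the attempt has a gap by construction and cannot be credited as a proof.

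Beyond that acknowledged gap, the intermediate step you present as attainable is not. You claim the uniform bound $\sum_{p\leq z}\chi(p)e(\beta p)\ll z/(\log q)^A$ for all $z\geq(\log q)^C$ via Vaughan's identity, Siegel--Walfisz and the large sieve. Siegel--Walfisz only controls characters to moduli of size at most a fixed power of $\log z$; here the modulus is $q$ while $z$ may be as small as a power of $\log q$, so it gives nothing, and the Type II/bilinear ranges in Vaughan's method likewise require $z$ to be a positive power of $q$. In fact for $z\leq q^{1/4}$ no unconditional cancellation is known even in the untwisted sum $\sum_{p\leq z}\chi(p)$ (Burgess only begins to help past $q^{1/4+o(1)}$), so your ``central auxiliary estimate'', uniform in $\beta$ down to $z\asymp(\log q)^{O(1)}$, is itself at least as deep as the conjecture you are trying to prove. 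The decomposition $n=pm$ with $p=P(n)$ is fine as bookkeeping, but both the auxiliary prime-sum estimate and the final dichotomy step are genuinely open, which is precisely why the paper records the statement as a conjecture.
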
 
Assuming this conjecture, the  inequalities in Theorem \ref{big thm} and Theorem \ref{thm even} become equalities and therefore, we expect our results to be ``best possible''.

Although our proof of Theorem \ref{big thm} restricts the range to $B\leq \frac{\log\log\log q}{\log\log\log\log q}$, we believe that the lower bound in Theorem \ref{big thm} should extend to a much wider range:

\begin{conj}
 Let q be a large prime and write  $x=\frac{q}{(\log q)^B}$. If $\frac{q}{\exp(\sqrt{\log q} \log\log q)}\leq x$ then
 \begin{equation*}
     \max_{\chi \neq \chi_0 }\bigg|\sum_{n\leq x}\chi (n)\bigg|\gtrsim \frac{1}{\pi}\int_B^\infty \rho(u) du\cdot \sqrt{q} \log \log q.
 \end{equation*}

\end{conj}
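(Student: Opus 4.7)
The plan is to prove the conjecture by extending the resonator argument that drives Theorem \ref{big thm}. I would start from Polya's Fourier expansion
\[
\sum_{n\le x}\chi(n) = \frac{\tau(\chi)}{2\pi i}\sum_{1\le|m|\le N}\frac{\bar\chi(m)\bigl(1-e(-mx/q)\bigr)}{m} + O\!\left(\frac{q\log q}{N}\right),
\]
with $N$ a truncation parameter that must grow with $B$. The factor $1-e(-m/(\log q)^B)$ behaves like $-2\pi i m/(\log q)^B$ for $m\ll(\log q)^B$ and oscillates past that range, so the dominant contribution comes from $m$ of size roughly $(\log q)^B$. This is why the Dickman function arises: writing $m=p_1\cdots p_k$ with the $p_j$ running over prime factors, the main term naturally reorganizes into a sum over smooth numbers whose density is governed by $\rho$.

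Next I would introduce a resonator of the form $R(\chi)=\sum_n r(n)\chi(n)$ with $r$ multiplicative and supported on squarefree $y$-smooth numbers, choosing $y$ so that $(\log y)(\log(q/x))\asymp \log q$ and tuning $r(p)$ on primes up to $y$ so that $|R(\chi)|^2$ favours characters that pretend to be the principal character on small primes. Computing the normalized first moment $\frac{1}{\phi(q)}\sum_{\chi\neq\chi_0}|R(\chi)|^2\sum_{n\le x}\chi(n)$ by orthogonality and substituting the Fourier expansion yields, after a Mellin identity collapsing the resulting Euler product, a main term of the predicted size $\frac{1}{\pi}\int_B^\infty \rho(u)\,du\cdot\sqrt{q}\log\log q$. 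Dividing by the elementary second moment $\sum_\chi |R(\chi)|^2$ produces a character $\chi$ whose sum achieves the lower bound.

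The main obstacle, and the source of the restriction $B<\log\log\log q/\log\log\log\log q$ in Theorem \ref{big thm}, is the off-diagonal interaction in the Fourier expansion: once $B$ is large one must take $N$ substantially larger than $q^{1/2}$ so that the Polya tail remains negligible, and the pairs $(m_1,m_2)$ arising when the resonator is convolved with the Fourier harmonics no longer decouple under orthogonality alone. To handle the enlarged range I would replace the sharp cutoff at $x$ by a Beurling--Selberg majorant--minorant pair at scale slightly shorter than $x/\log q$, and estimate the resulting difference using precisely the input of Conjecture \ref{conjj} on twisted sums supported on smooth numbers, which is calibrated to push the range up to $x\ge q/\exp(\sqrt{\log q}\log\log q)$. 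If this smoothing step succeeds it should also remove the exceptional set of $Q^{1/10}$ primes present in Theorem \ref{big thm}, since averaging over $q$ is exactly the tool used there to control the same off-diagonal oscillation.
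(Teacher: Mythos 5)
This statement is one of the paper's open conjectures: the paper offers no proof of it, and your proposal does not amount to one either. The decisive issue is that your sketch is not a proof but a plan whose critical steps are only asserted: you invoke Conjecture \ref{conjj}, which is itself unproven, and you concede the key point with ``if this smoothing step succeeds.'' A conditional and incomplete outline of this kind cannot establish the stated lower bound in the range $x\ge q/\exp(\sqrt{\log q}\,\log\log q)$, i.e.\ $B$ as large as roughly $\sqrt{\log q}$.

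You also misidentify the actual bottleneck. The paper's proof of Theorem \ref{big thm} is not a resonator argument: it finds $1$-pretentious characters by the pigeonhole/lattice results of Section \ref{lattices} (Propositions \ref{big set} and \ref{hyp odd}), and the range restriction $B<\log\log\log q/\log\log\log\log q$ comes from the quality of pretentiousness achievable this way. One needs $N^{\pi(T)}\ll q$, which forces $N\asymp\log\log q$ and $T\asymp\log q/\log\log q$, hence $h(T)\log y\asymp\log\log\log q$ and an error term $O(\sqrt q\,\log\log\log q)$; since the main term is of size $\int_B^\infty\rho(u)\,du\cdot\sqrt q\,\log\log q\asymp B^{-B}\sqrt q\,\log\log q$, it is swallowed by that error as soon as $B$ exceeds about $\log\log\log q/\log\log\log\log q$. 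Enlarging the P\'olya truncation $N$, smoothing the cutoff with Beurling--Selberg majorants, or averaging over $q$ does nothing to improve the accuracy $1/N$ with which a single character can be made to pretend to be $1$ on primes up to $T$, which is the obstruction; likewise the exceptional set of $Q^{1/10}$ primes in Theorem \ref{big thm} stems from the Fourier-analytic lattice argument for odd multipliers (Lemma \ref{size r} and Proposition \ref{prop odd}), not from off-diagonal terms in a resonator first moment. Finally, a first-moment resonance computation is not known to produce the sharp constant $\frac1\pi\int_B^\infty\rho(u)\,du$; extracting that constant is exactly what the pretentious-character construction is for, so the central difficulty of the conjecture remains untouched by your proposal.
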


A key aspect of our  investigation concerns the following question about lattices.
Given a lattice vector $\mathbf{u} = \frac{1}{M}(u_1,\cdots, u_k) \in (\mathbb{R}/\mathbb{Z})^k$ is there an integer $1\leq \ell\leq M-1$ such that all components of $l\mathbf{u} \pmod 1$ are small?  
The  pigeonhole principle allows us to find many such vectors $\ell\mathbf{u}$, but all of the $\ell$ that we find this way might be even, which for us would mean not being able to work with odd characters. What we need is to find many such odd $\ell$, and so we start by making the following more general definitions:

\begin{equation}\label{cn+}
    C_{n^+}(\eta, k) = \left\{0\leq \ell\leq M-1\text{ , } \ell\equiv 0 \pmod n : |(\ell\mathbf{u})_j|\leq \eta \text{ for  }  1\leq j\leq k \right\},
\end{equation}
and 
\begin{equation}\label{cn-}
    C_{n^-}(\eta, k) = \left\{0\leq \ell\leq  M-1 \text{ , } \ell \not\equiv 0 \pmod n :  |(\ell\mathbf{u})_j|\leq \eta \text{ for  }  1\leq j\leq k \right\},
\end{equation}
where $|x|$ is the distance from $x$ to the nearest integer. These two sets contain all the vector multiples $\ell\mathbf{u}$ with all small components. The pigeonhole principle gives the following:
 
 \begin{propp}\label{even lat}
 Fix a positive real number $N$. Let $\mathbf{u}\in (\mathbb{R}/\mathbb{Z})^k$ with $k$ large for which $M\mathbf{u}\in  \mathbb{Z}^k$. Then for any fixed integer $n<\frac{M}{N^k}$, 
\begin{equation*}
 \#C_{n^+}\left(\frac{1}{N},k\right)  \geq \frac{M}{nN^k}.
\end{equation*}
\end{propp}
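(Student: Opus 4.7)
The plan is to run a clean pigeonhole / subtractive argument, the only twist being that the construction has to preserve a congruence condition modulo $n$. First I would set
\[
S \;=\; \{\ell \in \{0,1,\dots,M-1\} : \ell \equiv 0 \pmod n\} \;=\; \{0, n, 2n, \dots\},
\]
so that $|S| \geq M/n$. For each $\ell\in S$, consider the image $\ell\mathbf{u} \in (\mathbb{R}/\mathbb{Z})^k$, identified with $[0,1)^k$, and partition the torus into $N^k$ axis-aligned boxes of side $1/N$. Since the hypothesis $n < M/N^k$ forces $|S| > N^k$, the pigeonhole principle produces some box $B$ containing at least
\[
T \;\geq\; \left\lceil \frac{|S|}{N^k}\right\rceil \;\geq\; \frac{M}{nN^k}
\]
of these images.

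Next I would pass from the collision in $B$ to actual elements of $C_{n^+}(1/N,k)$ via subtraction. Listing the $\ell \in S$ whose images land in $B$ as $\ell_1 < \ell_2 < \cdots < \ell_T$, each difference $\ell_i - \ell_1$ lies in $\{0,1,\dots,M-1\}$, is a multiple of $n$ (since both $\ell_i$ and $\ell_1$ are), and satisfies
\[
\bigl|((\ell_i-\ell_1)\mathbf{u})_j\bigr| \;\leq\; \frac{1}{N} \qquad (1\leq j\leq k),
\]
because $\ell_i\mathbf{u}$ and $\ell_1\mathbf{u}$ belong to the same box of side length $1/N$, so their coordinatewise difference has absolute value at most $1/N$ in $\mathbb{R}/\mathbb{Z}$. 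Thus the $T$ distinct integers $\ell_1-\ell_1,\,\ell_2-\ell_1,\dots,\ell_T-\ell_1$ all belong to $C_{n^+}(1/N,k)$, yielding the claimed bound.

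There is no real obstacle here: the argument is essentially Dirichlet's classical simultaneous approximation trick. The one genuinely important design choice is to restrict to $S \subset n\mathbb{Z}$ \emph{before} applying pigeonhole, precisely so that differences stay in $n\mathbb{Z}$ and land in $C_{n^+}$ rather than in $C_{n^-}$. I expect the harder companion problem for the paper to be producing a comparable lower bound on $\#C_{n^-}(1/N,k)$ (e.g.\ many \emph{odd} $\ell$ when $n=2$), since differences of odd numbers are even and a direct pigeonhole no longer suffices; that is presumably where the Fourier-analytic argument alluded to in the introduction enters.
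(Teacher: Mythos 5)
Your proposal is correct and follows essentially the same argument as the paper: restrict to multipliers $\ell\equiv 0\pmod n$, pigeonhole their images among the $N^k$ boxes of side $1/N$, and subtract a fixed element of the crowded box (the paper subtracts from the largest element rather than the smallest, which is an immaterial difference). No gaps.
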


Restricting our search to multiples $\ell\mathbf{u}$ with $\ell\not\equiv 0 \mod n$ we obtain the following:

\begin{thmm}\label{small lu}
Fix a positive real number $N$. Let $\mathbf{u}\in (\mathbb{R}/\mathbb{Z})^k$ with $k$ large for which $M\mathbf{u}\in  \mathbb{Z}^k$, and let $n$ be a divisor of $M$. Then either
\begin{enumerate}[label=(\roman*)]
    \item There exists a non-zero vector $\mr \in (\mathbb{R}/\mathbb{Z})^k$ such that $|r_j|\leq k^4N\log^2(N)$ for $j\leq k$ and $n(\mr \cdot \mathbf{u})  \equiv 0 \pmod 1$; or
    \item  
\begin{equation*}
    \#C_{n^-}\left(\frac{2}{N}, k\right)\geq \frac{M}{nN^k}.
\end{equation*}
\end{enumerate}
\end{thmm}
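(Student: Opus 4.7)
The plan is a Fourier-analytic argument. Read the $\mathbf r$ in (i) as an element of the Pontryagin dual $\widehat{(\mathbb R/\mathbb Z)^k}=\mathbb Z^k$, with $|r_j|$ the ordinary integer absolute value; then (i) asserts that the sublattice
\[
L \;:=\; \bigl\{\mathbf r \in \mathbb Z^k : n(\mathbf r \cdot \mathbf u)\in\mathbb Z\bigr\}
\]
has a non-zero element in the $\ell^\infty$-box $[-R,R]^k$ with $R := k^4 N \log^2 N$. Introduce a non-negative product bump $F(\mathbf x)=\prod_{j=1}^k F_j(x_j)$, each $F_j\ge 0$ a symmetric tent supported in $[-2/N,2/N]$ with $F_j(0)=1$ and $\hat F_j\ge 0$. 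Since $0\le F\le \mathbf 1_{[-2/N,2/N]^k}$,
\[
\#C_{n^-}(2/N,k) \;\ge\; \mathcal S \;:=\; \sum_{\ell=0}^{M-1} F(\ell\mathbf u)\,\mathbf 1[\ell\not\equiv 0\pmod n],
\]
and it suffices to prove $\mathcal S\ge M/(nN^k)$ when (i) fails.

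Carry out the Fourier expansion by writing $\mathbf 1[\ell\not\equiv 0\pmod n]=1-\tfrac1n\sum_{j=0}^{n-1}e(j\ell/n)$ and $F(\mathbf x)=\sum_{\mathbf r\in\mathbb Z^k}\hat F(\mathbf r)\,e(\mathbf r\cdot\mathbf x)$, then swap sums. The assumption $M\mathbf u\in\mathbb Z^k$ together with $n\mid M$ forces $M(\mathbf r\cdot\mathbf u+j/n)\in\mathbb Z$ for every $\mathbf r$ and $j$, so the inner geometric sum $\sum_{\ell=0}^{M-1}e(\ell(\mathbf r\cdot\mathbf u+j/n))$ collapses to $M$ (when $\mathbf r\cdot\mathbf u+j/n\in\mathbb Z$) or $0$. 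Collecting terms produces a main term $(1-\tfrac1n)M\hat F(\mathbf 0)$, of order $M(2/N)^k\gg M/(nN^k)$ (by a factor at least $2^{k-1}$), plus an error bounded in modulus by $\frac{M}{n}\sum_{\mathbf r\in L\setminus\{\mathbf 0\}}\hat F(\mathbf r)$.

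The heart of the argument --- and the step I expect to be the main obstacle --- is bounding this error when (i) fails. Under that hypothesis $L\cap[-R,R]^k=\{\mathbf 0\}$, and Minkowski's theorem then forces $[\mathbb Z^k:L]\ge R^k$, so $L$ is extremely sparse. I would split $L\setminus\{\mathbf 0\}$ into dyadic $\ell^\infty$-shells $2^sR<\|\mathbf r\|_\infty\le 2^{s+1}R$ and bound each shell using (a) the lattice density estimate $|L\cap[-2^{s+1}R,2^{s+1}R]^k|\ll 2^{sk}$ from $[\mathbb Z^k:L]\ge R^k$ and (b) the Fourier decay of $\hat F_j$ on the large coordinate in each shell. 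The naive $1/r^2$ tent decay turns out to be only marginally sufficient once combined with the $k$-dimensional volume factor, so to force the geometric series to converge with room to spare one either upgrades $F_j$ to a higher-order B-spline --- buying polynomial decay of arbitrary order at the cost of a modest shrink in $\hat F(\mathbf 0)$ --- or passes to Poisson summation on the dual lattice $L^\ast$, where a transference-type estimate converts the $\ell^\infty$ shortest-vector bound on $L$ into a bound on $|L^\ast\cap[-2/N,2/N]^k|$. The $k^4\log^2 N$ slack built into $R$ is exactly the amount needed for the resulting tail bound to absorb the $k$ coordinates, the dyadic sum, and the constants in the Fourier decay, leaving the main term dominant by a factor $\ge 2$; once this is achieved, $\mathcal S\ge M/(nN^k)$ and (ii) follows.
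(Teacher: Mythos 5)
Your overall strategy (a one-shot Fourier count of the multipliers $\ell\not\equiv 0\pmod n$, detecting the lattice $L=\{\mathbf{r}\in\mathbb{Z}^k: n(\mathbf{r}\cdot\mathbf{u})\in\mathbb{Z}\}$ on the dual side) can in principle be made to work, but the step you yourself flag as ``the main obstacle'' is exactly where the content of the theorem lies, and as written it has a genuine gap. Two quantitative requirements are in tension and you resolve neither. First, the shell counts: from the failure of (i) you only get that $L$ has no nonzero vector in $[-R,R]^k$; the bound $|L\cap[-T,T]^k|\le (2T/R+1)^k$ then follows from a packing argument using this shortest-vector hypothesis, \emph{not} from the index bound $[\mathbb{Z}^k:L]\ge R^k$ (an index bound alone does not control point counts in boxes), and the resulting constant is $C^k$, not absolute. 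Because the shell count grows like $C^k2^{sk}$ while your tent gives decay of fixed order $2$ in a single coordinate, the dyadic sum diverges for every $k\ge 3$ --- it is not ``marginally sufficient''. To beat $C^k2^{sk}$ you need per-coordinate decay of order growing linearly in $k$. Second, and this is the part your proposed fixes do not address: any such $F_j$ must also keep $\hat F_j(0)=\int F_j$ of size at least about $1/N$ (so that the main term $(1-\tfrac1n)M\hat F(\mathbf{0})$ still exceeds the target $M/(nN^k)$), while satisfying $0\le F_j\le 1$ and $\mathrm{supp}\,F_j\subset[-2/N,2/N]$. A B-spline of order $\sim k$ normalized to have maximum $1$ has $\int F_j\asymp N^{-1}k^{-1/2}$, so $\hat F(\mathbf{0})$ shrinks by a factor like $k^{-k/2}$ --- not a ``modest shrink'' --- and the main term drops below $M/(nN^k)$ for $n=2$. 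The fix (e.g.\ an indicator of $[-3/(2N),3/(2N)]$ convolved with a narrow order-$(k+2)$ mollifier of total width $1/N$, or a Beurling--Selberg-type minorant) exists, but it is precisely the missing construction, and your ``transference to $L^\ast$'' alternative is not developed at all.

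For comparison, the paper avoids this tension entirely by splitting the task: a Fourier/Poisson argument with a fixed Schwartz bump (decay $e^{-\sqrt{y}}$, truncation at $|r_j|\le k^4N\log^2 N$) is used only to prove \emph{positivity} of the counting sum, i.e.\ the existence of a single $\ell\not\equiv 0\pmod n$ with all components of $\ell\mathbf{u}$ smaller than $1/N$ (Proposition \ref{prop odd}); there the main term need only be positive, so no comparison with $M/(nN^k)$ is required. The count is then obtained for free by the pigeonhole bound $\#C_{n^+}(1/N,k)\ge M/(nN^k)$ (Proposition \ref{even lat}) together with the translation trick of Lemma \ref{odd set}, which is also why the radius in (ii) is $2/N$ rather than $1/N$. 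If you want to keep your direct route, you must supply the explicit majorant/minorant with $\int F_j\ge c/N$, $c>1$, and decay of order $\ge k+2$ beginning at frequencies $\asymp kN$, and redo the shell estimate with the $C^k$ constants tracked; otherwise the argument does not close.
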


Note that in Proposition \ref{even lat} we can choose $n$ to be any integer, while $n$ is a divisor of $M$ in Theorem \ref{small lu}.

Although we cannot quite show the converse, in the opposite direction, we have:

\begin{thmm}\label{not 2 small}
Let $\mathbf{u}\in (\mathbb{R}/\mathbb{Z})^k$ with $k$ large for which $M\mathbf{u}\in  \mathbb{Z}^k$, and let $n$ be a divisor of $M$. Suppose that there exists $\mr \in \mathbb{Z}^k$ such that $\mr\cdot\mathbf{u} \equiv \frac{t}{n} \pmod 1$ , where $(t,n)=1$. Then for any integer $1\leq \ell\leq M-1$ such that $\ell\not\equiv 0 \pmod n$, the vector  $\mathbf{x} = \ell\mathbf{u} \in \left(\mathbb{R}/\mathbb{Z}\right)^k$ satisfies
\begin{equation}
    |\mr\cdot \mathbf{x} \pmod 1|\geq \frac{1}{n} .
\end{equation}
In particular, if $|\mr|\leq L$, then 
\begin{equation*}
    |\mathbf{x}| \geq \frac{1}{nL}.
\end{equation*}
\end{thmm}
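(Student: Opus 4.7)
The plan is direct: the statement is essentially algebraic, combining a modular arithmetic identity with a Cauchy--Schwarz-type comparison between linear forms and coordinate norms. No deep machinery is needed.

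First I would compute $\mr\cdot\mathbf{x}\pmod 1$. Since $\mathbf{x}=\ell\mathbf{u}$ and $\mr\in\mathbb{Z}^k$, the hypothesis $\mr\cdot\mathbf{u}\equiv t/n\pmod 1$ immediately yields
\[
\mr\cdot\mathbf{x} \;=\; \ell(\mr\cdot\mathbf{u}) \;\equiv\; \frac{\ell t}{n}\pmod 1.
\]
Now I would exploit the coprimality $(t,n)=1$: combined with the assumption $\ell\not\equiv 0\pmod n$, this forces $\ell t\not\equiv 0\pmod n$. Writing $\ell t\equiv s\pmod n$ with $1\le s\le n-1$, the fraction $\ell t/n$ reduces modulo~$1$ to $s/n$, whose distance to the nearest integer is at least $1/n$. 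This proves the first inequality.

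For the ``in particular'' statement, I would compare $|\mr\cdot\mathbf{x}\pmod 1|$ to $|\mr|\cdot|\mathbf{x}|$ by passing to signed representatives. Picking $\tilde x_j\in(-1/2,1/2]$ with $|\tilde x_j|=|x_j|$, we have $\mr\cdot\tilde{\mathbf{x}}\equiv\mr\cdot\mathbf{x}\pmod 1$ since $\mr\in\mathbb{Z}^k$, and the distance of any real number to the nearest integer is at most its absolute value, so
\[
|\mr\cdot\mathbf{x}\pmod 1| \;\le\; |\mr\cdot\tilde{\mathbf{x}}| \;\le\; |\mr|\cdot|\mathbf{x}|
\]
under the paper's norm conventions (with $|\mathbf{x}|=\max_j |x_j|$ and $|\mr|$ the matching dual norm, to avoid a spurious factor of $k$). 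Combining this with the lower bound $1/n$ from the first part and the hypothesis $|\mr|\le L$ gives $1/n \le L|\mathbf{x}|$, i.e.\ $|\mathbf{x}|\ge 1/(nL)$.

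There is no real obstacle here; the argument is entirely elementary. The only point that requires any care is clarifying the norm convention so that the bound $|\mr\cdot\tilde{\mathbf{x}}|\le |\mr|\cdot|\mathbf{x}|$ is applied consistently with the definitions used in Theorem~\ref{small lu}, but this is a bookkeeping matter rather than a substantive issue.
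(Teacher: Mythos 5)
Your proposal is correct and follows essentially the same route as the paper: the first part is the identical computation $\mr\cdot\mathbf{x}\equiv \ell t/n \pmod 1$ with $(t,n)=1$ forcing the distance to be at least $1/n$, and the second part replaces the paper's Cauchy--Schwarz step ($|\mr||\mathbf{x}|\cos\theta = \mr\cdot\mathbf{x}\geq 1/n$) with an equivalent H\"older-type bound after choosing signed representatives. If anything, your explicit passage to representatives $\tilde x_j\in(-1/2,1/2]$ and your remark about matching the norm conventions make the second step slightly more careful than the paper's version, which elides this point (and contains a small typo, writing $\mr\cdot\mathbf{u}$ where $\mr\cdot\mathbf{x}$ is meant).
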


\subsection{Acknowledgement}
This work was done as part of the author's Ph.D. thesis. Major thanks go to Andrew Granville for his supervision and support. We also wish to thank Kevin Ford, James Maynard and Zeev Rudnick for insightful comments on lattices that helped solved a key component of this work.

%%%%%%%%%%%%
\section{Smooth numbers}

In this section we collect several important results about smooth numbers.

Hildebrand (\cite{tenenbaum} p.369) prove the following key estimate. For   $\exp((\log\log x)^{5/3+\epsilon}) \leq y=x^{1/u}$ we have 
    \begin{equation*}
        \psi(x,y) = x\rho(u)\left(1 + O\left(\frac{\log(u+1)}{\log y}\right)\right).
    \end{equation*}
 The following estimate for the size of $\rho(u)$ follows from Lemma 3.1 in \cite{frequency}:
\begin{equation}\label{u^u}
    \rho(u) \ll u^{-u} \text{ for all } u\geq 1.
\end{equation}

Corollary 8.3  in \cite{tenenbaum} states that for any integer $k\geq 0$ and real number $u_1>1$, if $u\geq u_1$, then we have 
\begin{equation}\label{rhoprime}
    \rho^{(k)}(u) = (-1)^k \xi(u)^k\rho(u)\left(1+O\left(\frac{1}{u}\right)\right),
\end{equation}
where $\xi(u)$ is the unique real non-zero root of the equation $e^{\xi} = 1+u\xi$.
 Lemma 8.1 in \cite{tenenbaum} states that if  $u \geq 3$ then
\begin{equation}\label{xi}
    \xi(u) = \log(u\log u) + O\left(\frac{\log\log u}{\log u}\right).
\end{equation}
 We immediately deduce that if  $\exp((\log\log x)^{5/3+\epsilon}) \leq y$ a then 
    \begin{equation}\label{saias}
        \psi(x,y) = x\rho(u) + O\left(\frac{x|\rho^{\prime}(u)|}{\log y}\right).
    \end{equation}
We also deduce that peturbing $u$ by a small amount does not affect too much the value of the $\rho$-function.

\begin{lem}\label{rho}
   For $|v|\leq \frac{1}{\log u}$, we have
  \begin{equation*}
       \rho(u+v)= \rho (u)\left(1+ O(|v|\log (u+1)\right).
  \end{equation*}
\end{lem}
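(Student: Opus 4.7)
The plan is to bound the logarithmic derivative $\rho'/\rho$ uniformly on the short interval between $u$ and $u+v$ and then integrate.

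First I would dispose of the range of small $u$. For $u\leq 3$, say, $\rho$ is piecewise smooth with $\rho(u)$ bounded away from $0$, so the mean value theorem immediately gives $\rho(u+v)-\rho(u) = O(|v|)$ and hence $\rho(u+v)=\rho(u)(1+O(|v|))$, which is stronger than what is claimed (since $\log(u+1)\asymp 1$ in this range). So assume henceforth that $u$ is large enough that \eqref{rhoprime} applies, and that $u+v \geq u/2$, which is guaranteed by $|v|\leq 1/\log u$.

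Applying \eqref{rhoprime} with $k=1$ yields
\begin{equation*}
    \frac{\rho'(t)}{\rho(t)} = -\xi(t)\left(1+O\left(\frac{1}{t}\right)\right)
\end{equation*}
for every $t$ on the segment between $u$ and $u+v$. By \eqref{xi} together with the fact that $\xi$ is monotone, one has $\xi(t) = \log(t\log t)+O(\log\log t/\log t) \ll \log(u+1)$ uniformly for $t\in [u/2,u+1]$. Therefore
\begin{equation*}
    \log\rho(u+v) - \log\rho(u) = \int_u^{u+v}\frac{\rho'(t)}{\rho(t)}\, dt = O\bigl(|v|\log(u+1)\bigr).
\end{equation*}

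Finally, since $|v|\leq 1/\log u$ and $\log(u+1)/\log u = 1+o(1)$, the quantity $|v|\log(u+1)$ is $O(1)$. Hence we may exponentiate using $e^x = 1+O(x)$ for bounded $x$ to conclude
\begin{equation*}
    \rho(u+v) = \rho(u)\bigl(1+O(|v|\log(u+1))\bigr),
\end{equation*}
as claimed. The only step requiring any care is verifying that $\xi(t)$ really is of size $\log(u+1)$ on the whole interval, which is immediate from the monotonicity of $\xi$ and from \eqref{xi}; there is no substantive obstacle here since the lemma is essentially a first-order Taylor expansion controlled by the sharp bound on $\rho'/\rho$ coming from \eqref{rhoprime}.
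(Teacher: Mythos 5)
Your proof is correct and rests on essentially the same ingredients as the paper's: the paper applies the mean value theorem to $\rho$ itself and bounds $\rho'(u_0)\ll \xi(u_0)\rho(u_0)\ll \rho(u)\log(u+1)$ via \eqref{rhoprime} and \eqref{xi}, while you integrate $\rho'/\rho$ over $[u,u+v]$ and exponentiate. Your variant is if anything slightly tidier, since working with $\log\rho$ sidesteps the comparison of $\rho$ at the intermediate point with $\rho(u)$ (needed in the paper's version when $v<0$) and you dispose of the small-$u$ range explicitly.
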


\begin{proof}
By the mean value theorem, there exists $u_0 \in [u, u+v)$ such that
\[
\rho(u+v) = \rho(u) + v\rho^\prime(u_0).
\]
By \eqref{rhoprime} and \eqref{xi}, we have 
\[
    \rho^\prime(u_0) \ll  \xi(u_0)\rho(u_0) \ll  \log (u+v)\rho(u).
\]
and the result follows.
\end{proof}
 
With the same proof one can prove  that for $u\geq 2$ and $0 < v \leq \frac{u}{\log u}$, we have
$\rho (u +v) = \rho(u)u^{-(1+o(1))v}$.

The next lemma approximates the sum of reciprocals of $y$-smooth integers using the Dickman-De Bruijn's function. It follows directly from the strong version of Lemma 3.3 in \cite{frequency}. (See remark 3.1)

\begin{lem}\label{smoothlog}
Let $y\geq 2$ and $0 < s\leq r$, then
\begin{equation*}
  \sum_{\substack{y^{s} \leq n\leq y^r \\ P(n)\leq y}} \frac{1}{n} = \log y \int_{s} ^r\rho(t) dt + O(\rho(s)).
\end{equation*}
\end{lem}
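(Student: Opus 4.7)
The plan is to write the sum as a Stieltjes integral against the smooth-number counting function $\psi(t,y)$ and then apply the asymptotic formula \eqref{saias}. Concretely, I would start with the identity
\begin{equation*}
\sum_{\substack{y^s\leq n\leq y^r\\ P(n)\leq y}}\frac{1}{n} \;=\; \int_{y^s}^{y^r}\frac{d\psi(t,y)}{t} \;=\; \frac{\psi(y^r,y)}{y^r}-\frac{\psi(y^s,y)}{y^s}+\int_{y^s}^{y^r}\frac{\psi(t,y)}{t^2}\,dt,
\end{equation*}
obtained by integration by parts. Each term on the right is ready to be fed into the Hildebrand/Saias-type expansion \eqref{saias}.

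The second step is a change of variables $t=y^u$, so that $dt = y^u\log y\,du$ and $\psi(y^u,y) = y^u\rho(u) + O\bigl(y^u|\rho'(u)|/\log y\bigr)$ by \eqref{saias}. Substituting yields
\begin{equation*}
\int_{y^s}^{y^r}\frac{\psi(t,y)}{t^2}\,dt \;=\; \log y \int_s^r \rho(u)\,du \;+\; O\!\left(\int_s^r |\rho'(u)|\,du\right),
\end{equation*}
which produces the claimed main term $\log y\int_s^r\rho(u)\,du$. Since $\rho$ is positive and nonincreasing on $[1,\infty)$, one has $\int_s^r|\rho'(u)|\,du = \rho(s)-\rho(r) \leq \rho(s)$ (and the portion with $u\leq 1$ contributes $O(1) = O(\rho(s))$ trivially). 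The boundary terms from the integration by parts become $\rho(r)-\rho(s) + O\bigl((|\rho'(r)|+|\rho'(s)|)/\log y\bigr)$, both of which are absorbed into $O(\rho(s))$ after noting $|\rho'(s)|\ll \log(s+1)\rho(s)$ by \eqref{rhoprime}--\eqref{xi}.

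The one step that takes extra care is the range of validity of \eqref{saias}: the displayed estimate in the paper is asserted only for $y\geq \exp((\log\log x)^{5/3+\varepsilon})$, whereas the lemma allows $y\geq 2$. The fix is to invoke de Bruijn's uniform bound $\psi(x,y)=x\rho(u)+O(x/\log y)$ valid for all $y\geq 2$, which loses nothing here because the $O(x/\log y)$ error, after the change of variables and integration, also contributes $O((r-s)) = O(\rho(s)\cdot(r-s)\log(s+1))$, a quantity comfortably smaller than $\log y\int_s^r\rho(u)\,du$ plus $O(\rho(s))$ in any regime where the main term is not itself dominated by $O(\rho(s))$. The mildly annoying point, and the only real obstacle, is keeping the error in the single clean form $O(\rho(s))$ uniformly in both $s$ and $r$; this is handled by splitting at $u=1$ (where $\rho\equiv 1$) and using monotonicity of $\rho$ together with the bound $|\rho'(u)|\ll \log(u+1)\rho(u)$ from \eqref{rhoprime} and \eqref{xi} to telescope the boundary contributions against $\int_s^r|\rho'(u)|\,du$.
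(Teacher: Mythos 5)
Your main argument is exactly the paper's: partial summation against $\psi(t,y)$, insertion of \eqref{saias}, the substitution $t=y^u$ so that the $|\rho'(u)|/\log y$ error integrates to $\int_s^r|\rho'(u)|\,du\leq\rho(s)$, and a separate treatment of the range $u\leq 1$ where $\rho\equiv 1$. Within the range where \eqref{saias} is legitimate (in effect $r\leq \exp((\log y)^{3/5-\epsilon})$, which is the paper's implicit standing assumption and covers every application of the lemma), this is correct and essentially identical to the paper's proof.

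The one place where you go beyond the paper is the patch for unrestricted $y\geq 2$ and $r$, and that patch does not work as written. If you replace \eqref{saias} by the uniform estimate $\psi(x,y)=x\rho(u)+O(x/\log y)$, the additive error $O(x/\log y)$ integrates, after $t=y^u$, to $O(r-s)$ — not to $O(\rho(s))$ — and the displayed identification $O(r-s)=O\bigl(\rho(s)(r-s)\log(s+1)\bigr)$ is false, since $\rho(s)\log(s+1)$ can be far smaller than $1$. Nor is $O(r-s)$ absorbed by the main term: the paper applies the lemma with $s=\log\log y$ and $r=\sqrt{\log y}$ (Lemma \ref{end 1/n}), where $\log y\int_s^r\rho(u)\,du+\rho(s)\ll (\log y)^{3/2}(\log\log y)^{-\log\log y}$ while $r-s\asymp\sqrt{\log y}$, so the weakened error would destroy the conclusion precisely where it is needed. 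The honest fix is not a different $\psi(x,y)$ estimate but a restriction on $r$ keeping every $u\in[s,r]$ inside Hildebrand's range (as the paper tacitly does), so that the $|\rho'(u)|$-weighted error in \eqref{saias} is available throughout; with that proviso your proof stands as is.
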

\begin{proof}
 First, suppose that $s>1$. Then using partial summation and   \eqref{saias}, we have that
 \begin{align*}
     \sum_{\substack{y^s < n\leq y^r\\P(n)\leq y}}\frac{1}{n} &= \frac{1}{y^r}\psi(y^r, y) - \frac{1}{y^s}\psi(y^s, y) + \int_{y^s}^{y^r}\frac{\psi(t, y)}{t^2}dt\\
     &= O(\rho(s)) + \int_{y^s}^{y^r} \frac{t\rho(u) + O\left(\frac{t|\rho^{\prime}(u)|}{\log y}\right)}{t^2} dt\\
     & = \int_{y^s}^{y^r} \frac{\rho(u) + O\left(\frac{|\rho^{\prime}(u)|}{\log y}\right)}{t} dt
     = \log y\int_s^r \rho(u) du + O(\rho(s)),
 \end{align*}
 changing variable $t = y^u$.
   If $s\leq 1$, then for $s\leq u\leq 1$ we have that $\rho(u) = 1$ and therefore, 
 \begin{align*}
     \sum_{\substack{y^s < n\leq y^r\\P(n)\leq y}}\frac{1}{n}  &= \sum_{y^s < n\leq y} \frac{1}{n} + \sum_{\substack{y < n\leq y^r\\P(n)\leq y}} \frac{1}{n}\\
     &= (1-s)\log y + O(1) + \log y \int_1^r \rho(u) du + O(1)
      \end{align*}
 and the result follows.
\end{proof}

%%%%%%%%%%%%%%%%%%%%%%%%%%%%%%
\section{A first result about lattices}\label{lattices}

One of the main challenges in the proof of Theorem \ref{big thm} arises from finding an odd character which takes values close to one on all primes up to some point $T$. In order to handle this obstacle, we prove a corresponding result about lattices.

We say that $\mathbf{u} = \frac{1}{M}(u_1, u_2,\cdots, u_k)\in (\mathbb{R}/\mathbb{Z})^k$ has \emph{order} $M$, if  $M$ is the smallest positive integer for which $M\mathbf{u}\in \mathbb{Z}^k$.

%%%%%%%%%%%%%%%%%
\subsection{The easier case: Vector multipliers $\ell\equiv 0 \pmod n$}
 
As an immediate corollary to Proposition \ref{even lat} we have:

\begin{cor}\label{even cor}
 Let $\mathbf{u}\in (\mathbb{R}/\mathbb{Z})^k$ be a lattice vector of order $M$, then
 \begin{equation*}
    \#C_{2^+}\left(\frac{1}{N},k\right)  \geq  \frac{M}{2N^k}.
\end{equation*}
\end{cor}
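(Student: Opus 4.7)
The corollary is essentially Proposition \ref{even lat} specialized to $n = 2$. The plan is therefore to invoke the proposition with this particular choice of $n$, which produces directly
\[
\#C_{2^+}\!\left(\tfrac{1}{N},k\right) \;\geq\; \frac{M}{2N^k},
\]
exactly the bound claimed. This specialization is legitimate whenever the hypothesis $n<M/N^k$ of the proposition is satisfied, i.e.\ when $M>2N^k$, and in that generic regime nothing further is required.

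The only loose end is the complementary range $M\leq 2N^k$, where the hypothesis of Proposition \ref{even lat} fails. Here I would simply note that the target $M/(2N^k)$ is at most $1$, and that $\ell=0$ always lies in $C_{2^+}(1/N,k)$: indeed $0\equiv 0\pmod 2$ and every coordinate of $0\cdot \mathbf{u}$ vanishes, so $|(0\cdot\mathbf{u})_j|=0\leq 1/N$. Hence the trivial lower bound $\#C_{2^+}(1/N,k)\geq 1$ already dominates $M/(2N^k)$, closing this boundary case. Since all the substantive content has been absorbed into Proposition \ref{even lat} (whose proof is a standard pigeonhole argument: partition $[0,1)^k$ into $N^k$ subcubes of side $1/N$, consider the images of $0,n,2n,\ldots$ modulo $1$, and extract pairs mapping to the same subcube whose difference is then a multiple of $n$ lying in $C_{n^+}$), there is no real obstacle here; the proof is essentially a one-line specialization together with a boundary check.
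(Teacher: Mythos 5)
Your proposal is correct and matches the paper's own (implicit) proof: the corollary is obtained simply by specializing Proposition \ref{even lat} to $n=2$. Your additional check of the degenerate range $M\leq 2N^k$ via the trivial member $\ell=0$ of $C_{2^+}(1/N,k)$ is a sound extra precaution that the paper omits, but it does not change the argument in any essential way.
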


 \begin{proof}[\textbf{Proof of Proposition \ref{even lat}}]
Let $\mathbf{x}_\ell \equiv \ell\mathbf{u} \pmod 1$, where $n<\frac{M}{N^k}$ is fixed and the multipliers $0\leq \ell\leq M-1$ satisfies $\ell\equiv 0 \mod n$. We split $\left(\mathbf{R}/\mathbf{Z}\right)^k$ into $N^k$ equal hypercubes, each side of which has length $1/N$. Notice that for each integer $0\leq \ell\leq M-1$, with $l\equiv 0 \pmod n$, the vector $\mathbf{x}_l$ must belong to one of the cubes, and therefore, by the pigeonhole principle, we must have an hypercube $C$ which contains at least
$\frac{M}{nN^k}$ vectors.

Now, fix $\mathbf{x}_{r}\in C$ where $r > s$ for all other vectors $\mathbf{x}_s\in C$. By the construction of the cubes, for any other vector $\mathbf{x_s}$ in $C$  we must have $|x_{r,j} - x_{s,j}|\leq \frac{1}{N}$, for $j\leq k$. Let $\ell=r-s$ and observe that $r-s\equiv 0 \mod n$ and thus the vector $\mathbf{x_{\ell}}= \mathbf{x_{r}} - \mathbf{x_s}\equiv (r-s)\mathbf{u} \pmod 1$ has multiplier $\ell\equiv 0 \pmod n$, with each component of size at most $1/N$. As there are $ \frac{M}{nN^k} $ such vectors $\mathbf{x_s}\in C$, including $\mathbf{x}_r$, we deduce that there are at least $\frac{M}{nN^k}$ integers $0\leq \ell\leq M-1$, with $\ell\equiv 0 \mod n$, such that $\mathbf{x}_\ell$  has components $|x_{\ell,j}| \leq \frac{1}{N}$ for all $j\leq k$ and the result follows. 
\end{proof}
 
 Corollary \ref{even cor} is important for us, as it will allow us to show the existence of many even characters with small argument. However, we need to show that there are a lot of odd characters with small arguments. The next lemma shows that if we can find just one vector with multiplier $\ell \not\equiv 0 \pmod n$ that is small, then  we can find many of them.
 
 \begin{lem}\label{odd set}
 Given a lattice vector $\mathbf{u} \in (\mathbb{R}/\mathbb{Z})^k$ of order $M$, let $\mathbf{x}_\ell \equiv \ell\mathbf{u} \pmod 1$. 
 \end{lem}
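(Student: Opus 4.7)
The plan is to exploit translation invariance: if we have a collection of small vectors indexed by even multipliers $\ell \equiv 0 \pmod n$, shifting each one by the single odd multiplier $\ell_0$ guaranteed by hypothesis produces an equinumerous collection indexed by odd multipliers. Concretely, I anticipate the statement asserts that if some $\ell_0$ with $\ell_0 \not\equiv 0 \pmod n$ satisfies $|(\ell_0 \mathbf{u})_j| \leq \eta$ for all $j \leq k$, then $\#C_{n^-}\!\left(\eta+\tfrac{1}{N},k\right) \geq \frac{M}{nN^k}$.

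First I would invoke Proposition \ref{even lat} (or Corollary \ref{even cor} in the $n=2$ case) to produce a set $S \subseteq \{0,1,\dots,M-1\}$ of size at least $\frac{M}{nN^k}$, consisting of multipliers $\ell \equiv 0 \pmod n$ with $|(\ell \mathbf{u})_j| \leq \frac{1}{N}$ for every $j \leq k$. Next, for each $\ell \in S$, I would define $\ell' \equiv \ell + \ell_0 \pmod M$, chosen to lie in $\{0,\dots,M-1\}$. The congruence class mod $n$ of $\ell'$ is $0 + \ell_0 \not\equiv 0 \pmod n$ (here I would use that $n \mid M$ in the relevant applications so reduction mod $M$ does not affect the residue mod $n$; if the lemma is stated without this divisibility then the translation argument still works because $\mathbf{x}_{\ell'}$ depends only on $\ell'$ mod $M$ and we only need one representative mod $n$, which is supplied by the hypothesis).

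The heart of the argument is then the triangle inequality for the distance-to-nearest-integer function: since $\mathbf{x}_{\ell'} \equiv \mathbf{x}_\ell + \mathbf{x}_{\ell_0} \pmod 1$ componentwise, we have
\[
|(\ell' \mathbf{u})_j| \;\leq\; |(\ell \mathbf{u})_j| + |(\ell_0 \mathbf{u})_j| \;\leq\; \frac{1}{N} + \eta
\]
for each $j \leq k$. Because $\ell \mapsto \ell+\ell_0 \pmod M$ is a bijection on $\mathbb{Z}/M\mathbb{Z}$, the multipliers $\ell'$ produced this way are distinct, so the translated set has the same cardinality as $S$, namely at least $\frac{M}{nN^k}$. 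These $\ell'$ witness the desired lower bound for $\#C_{n^-}(\eta + \tfrac{1}{N}, k)$.

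I do not foresee a genuine obstacle here; the argument is essentially a one-line application of the fact that $\mathbb{Z}/M\mathbb{Z}$ acts freely on itself by translation, together with subadditivity of $|\cdot|$. The only care required is bookkeeping around the residue of $\ell+\ell_0$ mod $n$ versus mod $M$, which is transparent when $n \mid M$ and otherwise requires a brief remark that the shift preserves the parity condition in $\mathbb{Z}/n\mathbb{Z}$ because the relation $\ell' \equiv \ell + \ell_0 \pmod M$ reduces consistently modulo any divisor of $M$, and in practice we apply this in situations where $n$ is small (typically $n=2$) and either $n \mid M$ or we choose representatives before reducing.
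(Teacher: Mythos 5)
Your proposal is correct and is essentially the paper's own argument: the paper likewise takes the one small multiplier $r\not\equiv 0\pmod n$ guaranteed by the hypothesis and combines it (via $\ell\equiv r-s\pmod M$ rather than your $\ell+\ell_0$, an immaterial difference since $|\cdot|$ is symmetric) with each small multiplier $s\equiv 0\pmod n$, using the triangle inequality for the distance to the nearest integer and injectivity of the shift to conclude $\#C_{n^-}(\nu+\eta,k)\geq \#C_{n^+}(\eta,k)$. The only cosmetic difference is that you fold in Proposition \ref{even lat} to state the conclusion as a numerical bound, whereas the lemma itself is stated as the abstract comparison of cardinalities, with that combination deferred to the deduction of Theorem \ref{small lu}.
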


  Suppose that $C_{n^-}(\nu, k)\neq\emptyset$, then
  \begin{equation*}
      \#C_{n^-}(\nu+ \eta, k)\geq \#C_{n^+}(\eta, k),
  \end{equation*}
  where the sets are defined as in (\ref{cn+}) and (\ref{cn-}).
  \begin{proof}
 Suppose that there exists an integer $0\leq r\leq M-1$, with $r\not\equiv 0 \mod n$, such that each component of $\mathbf{x}_r$ satisfies $|x_{r,j}|\leq \nu$ for $1\leq j\leq k$. For any integer $s\equiv 0 \pmod n$ in the same range and such that the vector $\mathbf{x}_s \in C_+(\eta,k)$, then $\ell \equiv r-s \pmod M$ satisfies $\ell \not\equiv 0 \pmod n$ and the size of the components of the vector $\mathbf{x}_\ell$ is bounded by 
\[
     |x_{\ell,j}| = |x_{r,j}\pm x_{s,j}|  \leq |x_{r,j}|+ |x_{s,j}|\  \leq \eta + \nu. 
 \]
 Hence $\mathbf{x}_\ell\in C_-(\eta+\nu,k)$.
 Therefore distinct vectors in $C_+(\eta, k)$ will give rise to distinct vectors in $C_-(\eta+\nu, k)$, and therefore it follows that $\#C_-(\eta+\nu, k)\geq \#C_+(\eta, k)$.
 \end{proof}

 %%%%%%%%%%%%%%%%%
\subsection{The harder case: Vector multipliers $\ell\not\equiv 0 \pmod n$}
Finding multipliers of the form $\ell \not\equiv 0 \pmod n$ for our lattice vector $\mathbf{u}$ is more subtle; indeed such vectors do not always occur as we see in  Theorem \ref{small lu}.
Theorem \ref{small lu} follows directly from Proposition \ref{even lat}, Lemma \ref{odd set} and the following key proposition.

\begin{prop}\label{prop odd}
Let $N>0$, $k$ be a large integer and let $\mathbf{u}\in (\mathbb{R}/\mathbb{Z})^k$ be a lattice vector of order M. Given a divisor $n$ of $M$ then either
\begin{enumerate}[label=(\roman*)]
    \item There exists a non-zero vector $\mr \in (\mathbb{R}/\mathbb{Z})^k$ such that $|r_j|\leq k^4N\log^2(N)$ for  $j\leq k$ and $n(\mr \cdot \mathbf{u})  \equiv 0 \pmod 1$; or
    \item \begin{equation*}
    C_{n^-}\left(\frac{1}{N}, k\right)\neq \emptyset.
\end{equation*}
\end{enumerate}
\end{prop}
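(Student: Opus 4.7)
The plan is a Fourier analytic argument on the torus $(\mathbb{R}/\mathbb{Z})^k$. First I construct a non-negative bump function $f$ supported on the small cube $\{\mathbf{x}:\|\mathbf{x}\|_\infty\leq 1/N\}$ whose Fourier coefficients are themselves non-negative with rapid decay, and then examine
\[
S(\mathbf{u}) := \sum_{\substack{0\leq \ell< M\\ n\nmid \ell}} f(\ell\mathbf{u}).
\]
Any term with $f(\ell\mathbf{u})>0$ witnesses an element of $C_{n^-}(1/N,k)$ by the support of $f$, so it is enough to prove $S(\mathbf{u})>0$ assuming (i) fails.

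Concretely, set $\epsilon=1/(LN)$ for an even integer $L>k$ (e.g.\ $L=2\lceil (k+1)/2\rceil$) and take $f(\mathbf{x})=\prod_{j=1}^k G(x_j)$ where $G=\mathbf{1}_{[-\epsilon,\epsilon]}^{*L}$ is the $L$-fold self-convolution. Then $f$ has the desired support and $\widehat{f}(\mathbf{r})=\prod_j(\sin(2\pi r_j\epsilon)/(\pi r_j))^L\geq 0$, satisfying $\widehat{f}(\mathbf{r})\leq (2\epsilon)^{Lk}$ and $\widehat{f}(\mathbf{r})\leq \prod_j(\pi|r_j|)^{-L}$. Expanding $f(\ell\mathbf{u})$ in Fourier series and evaluating the geometric sum $\sum_{n\nmid\ell}e(\ell\alpha)$ for $\alpha=\mathbf{r}\cdot\mathbf{u}\in(1/M)\mathbb{Z}$, one obtains
\[
S(\mathbf{u}) = \frac{M}{n}\Bigl((n-1)\sum_{\mathbf{r}\in \Lambda}\widehat{f}(\mathbf{r}) - \sum_{\mathbf{r}\in\Lambda_n\setminus\Lambda}\widehat{f}(\mathbf{r})\Bigr) \geq \frac{M}{n}\bigl((n-1)\widehat{f}(0)-T\bigr),
\]
where $\Lambda=\{\mathbf{r}\in\mathbb{Z}^k:\mathbf{r}\cdot\mathbf{u}\in\mathbb{Z}\}$, $\Lambda_n=\{\mathbf{r}\in\mathbb{Z}^k:n\mathbf{r}\cdot\mathbf{u}\in\mathbb{Z}\}$, and $T:=\sum_{\mathbf{r}\in\Lambda_n\setminus\{0\}}\widehat{f}(\mathbf{r})$; the middle term is dropped since $\widehat{f}\geq 0$.

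The heart of the argument is bounding $T$. Failure of (i) forces the first successive minimum of $\Lambda_n$ in the $\ell^\infty$-norm to satisfy $\lambda_1>R:=k^4 N\log^2 N$, and a packing argument with disjoint cubes of side $\lambda_1$ around lattice points gives $|\Lambda_n\cap[-A,A]^k|\leq (2A/R+1)^k$. I would then decompose $T$ dyadically over the shells $\|\mathbf{r}\|_\infty\in[2^a R,2^{a+1}R)$, $a\geq 0$: in each shell some coordinate has $|r_{j_0}|\geq 2^a R$, contributing a factor $\leq (\pi\cdot 2^a R)^{-L}$ to $\widehat{f}(\mathbf{r})$, while each of the other $k-1$ factors is at most $(2\epsilon)^L$. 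Combined with the lattice-point count $\ll_k 2^{ak}$, each shell contributes $\ll_k (2\epsilon)^{L(k-1)}(\pi R)^{-L}\cdot 2^{a(k-L)}$. Since $L>k$, the geometric series in $a$ converges and $T\ll_k (2\epsilon)^{L(k-1)}(\pi R)^{-L}$.

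Finally, compare with $\widehat{f}(0)=(2\epsilon)^{Lk}$: the inequality $T<(n-1)\widehat{f}(0)$ reduces to $(2\pi R\epsilon)^L\gg_k 1$, i.e.\ $R\gg_k LN\asymp kN$, which is trivially implied by $R=k^4 N\log^2 N$ for $k$ large. Hence $S(\mathbf{u})>0$, forcing some $\ell$ with $n\nmid\ell$ and $f(\ell\mathbf{u})>0$, which is condition (ii). The main obstacle is producing enough Fourier decay in $\widehat{f}$ so that the dyadic tail $T$ is dominated by $\widehat{f}(0)$; this forces the convolution power $L$ to exceed the dimension $k$ (a plain triangle-function bump, $L=2$, already fails for $k\geq 2$). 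The bound $k^4 N\log^2 N$ in (i) is considerably more generous than what this argument strictly requires, leaving slack for downstream applications.
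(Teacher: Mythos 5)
Your proof is correct, and although it shares the paper's overall Fourier skeleton --- periodize a bump supported in $[-1/N,1/N]^k$, sum over multipliers $\ell$ with $n\nmid\ell$, and use the complete geometric sums (valid because $\mathbf{r}\cdot\mathbf{u}\in\frac1M\mathbb Z$ and $n\mid M$) to split the frequency side into $\Lambda=\{\mathbf{r}:\mathbf{r}\cdot\mathbf{u}\in\mathbb Z\}$ and $\Lambda_n\setminus\Lambda$ --- the mechanism controlling the error term is genuinely different. The paper takes a smooth compactly supported bump whose Fourier coefficients have both signs, so it must bound the contribution of \emph{all} integer vectors with $\max_j|r_j|>L$, using the per-coordinate $L^1$ bound $\sum_r|\hat{\phi}_N(r)|\ll N$ in the $k-1$ remaining coordinates together with the decay $|\hat{\phi}(y)|\ll e^{-\sqrt{y}}y^{-3/4}$; beating the resulting factor $(cN)^{k-1}$ is exactly what forces the threshold $L=k^4N\log^2 N$, and after truncation the hypothesis eliminates every non-zero frequency. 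You instead choose a Fej\'er-type kernel (an even convolution power of an interval indicator), so all Fourier coefficients are non-negative; this lets you keep only the zero frequency from the $\Lambda$-sum and bound just the sum over $\Lambda_n\setminus\{0\}$, which you control not by truncation but by a packing count: failure of (i) means the first $\ell^\infty$-minimum of the subgroup $\Lambda_n$ exceeds $R$, so a dyadic shell count beats the merely polynomial decay $|r|^{-L}$ with $L\asymp k$. What each buys: the paper's route works with any Schwartz bump but pays with the special-function decay estimate and the large threshold $k^4N\log^2N$; your route avoids that estimate and in fact shows that condition (i) may be weakened to $|r_j|\le CkN$ for an absolute constant $C$, a quantitatively stronger statement that bears directly on the remark preceding Proposition \ref{hyp odd} that improving the $k$-dependence of this threshold would enlarge the set of admissible moduli. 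Two small points to tidy, both of which the paper also leaves implicit: the argument presumes $n\ge 2$ (for $n=1$ the main term $(n-1)\hat{f}(0)$ vanishes and (ii) is vacuous), and your closing inequality $(2\pi R\epsilon)^L\gg_k 1$ can fail when $N$ is so close to $1$ that $\log^2 N\ll k^{-3}$ --- but for $N\le 2$ conclusion (ii) is trivial since every component distance is at most $\tfrac12\le\tfrac1N$, so one sentence dispatching that range closes the gap.
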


\begin{proof} We will use Fourier analysis to construct a counting function detecting vectors with small components, and apply it to vectors of the form $\mathbf{x}_\ell \equiv \ell\mathbf{u} \pmod 1$.

 For now, suppose that there is a positive real number $L$ for which there is no vector $\mr \in \mathbb{Z}^k$, with $|r_j|<L$ for $j\leq k$  such that $n(\mr \cdot \mathbf{u}) \equiv 0 \pmod 1$. 

 Let  
 \begin{equation*}
    \phi(x) = \begin{cases}
               c_0 e^{\frac{-1}{1-(2x)^2} } & \textrm{ if } x\in \left[-\frac{1}{2}, \frac{1}{2}\right] \\
                0 & \textrm{otherwise,}
                \end{cases}
\end{equation*}
which is a positive valued Schwartz function, 
where $c_0$ is a normalizing constant so that
$ \int_{-\infty}^{\infty} \phi(x) dx = 1$. 

The $k$-dimensional bump function
\begin{equation*}
    \Phi_N(\mathbf{x}) = \prod_{j\leq k}\phi_N(x_j),
\end{equation*}
where $\phi_N(x) := N\phi(Nx)$, is non-negative and has support in $\left[-\frac{1}{2N}, \frac{1}{2N}\right]^k$.
Now let
\begin{equation*}
    F_N(\mathbf{x}) = \sum_{\mathbf{v}\in \mathbb{Z}^k}\Phi_N(\mathbf{v}-\mathbf{x}),
\end{equation*}
and we  define our counting function by
\begin{equation*}
    S(N) = \sum_{\mathbf{x} \in V} F_N (\mathbf{x})
\end{equation*}
for a generic set of vectors $V$. For $V = \{\mathbf{x}\equiv \ell\mathbf{u} \pmod 1: 1\leq \ell\leq M-1 \textrm{ , } \ell \not\equiv 0 \pmod n\}$,  
\begin{equation*}
    S(N)= \sum_{a=1}^{n-1} \sum_{\substack{1\leq \ell\leq M-1\\\ell\equiv a \pmod n}} F_N(\ell\mathbf{u}).
\end{equation*}
If we can show that $S(N)>0$, then $F_N(\ell\mathbf{u})$ is non-zero for some  integer $\ell\not\equiv 0\pmod n$, thus proving the existence of a vector $\mathbf{x}\equiv \ell\mathbf{u} \pmod 1$ with components in $\left(\frac{-1}{N}, \frac{1}{N}\right)$.

 By the Poisson summation formula, we have that

\begin{align*}\label{poisson}
     F_N(\mathbf{x}) =  \sum_{\mathbf{r}\in \mathbb{Z}^k} \frac{e(\mathbf{x}\cdot \mathbf{r})}{N^k}\hat{\Phi}_N\left(\mr\right),
\end{align*}
and so
\begin{align*}
    S(N)&= \sum_{a=1}^{n-1} \sum_{\substack{1\leq \ell\leq M-1\\\ell\equiv a \pmod n}}\sum_{\mathbf{r} \in \mathbb{Z}^k} \frac{e(\mathbf{r}\cdot \ell\mathbf{u})}{N^k}\hat{\Phi}_N\left(\mr\right)\\
        &=\frac{1}{N^k}\sum_{\mathbf{r} \in \mathbb{Z}^k}\hat{\Phi}_N\left(\mr\right)\sum_{a=1}^{n-1}\sum_{0\leq s \leq m-1} e((sn+a)\mathbf{u}\cdot \mathbf{r}).
\end{align*}
where $mn=M$ with $\ell=sn+a$.
The inner sum is 
\begin{equation*}
    e(a\mathbf{u}\cdot \mr)\sum_{0\leq s \leq m-1} e(sn\mathbf{u}\cdot \mathbf{r}).
\end{equation*}
The components of $\mathbf{u}$ are all of the form $\frac{u_j}{M}$ for some $1\leq u_j\leq M-1$, so that we have a complete exponential sum and thus
\begin{equation*}
    \sum_{0\leq s \leq m-1} e(sn\mathbf{u}\cdot \mathbf{r}) = 
        \begin{cases}
            m & \textrm{if }n( \mathbf{u}\cdot \mr) \equiv 0 \pmod 1, \\
            0 & \textrm{otherwise}.
            \end{cases}
\end{equation*}
Hence, we have that
\begin{equation*}
    S(N) = \frac{m}{N^k} \sum_{\substack{r\in \mathbb{Z}^k\\n( \mr\cdot \mathbf{u})\equiv 0 \pmod 1}} \hat{\Phi}_N\left(\mr\right)\sum_{a=1}^{n-1} e\left(a \mr\cdot \mathbf{u}\right).
\end{equation*}
Observe that 
\begin{equation*}
    \sum_{a=1}^{n-1}  e\left(a \mr\cdot \mathbf{u}\right) = \begin{cases}
    n-1 & \textrm{ if } \mr \cdot u \equiv 0 \pmod 1,\\
    -1 & \textrm{otherwise}.
    \end{cases}
\end{equation*}
Therefore, it follows that 
\begin{equation*}
    S(N) = \frac{m}{N^k}\Bigg((n-1)\sum_{\substack{\mr \in \mathbb{Z}^k \\ \mr \cdot \mathbf{u} \equiv 0 \pmod 1}}\hat{\Phi} _N(\mr) - \sum_{\substack{\mr \in \mathbb{Z}^k \\ n(\mr \cdot \mathbf{u}) \equiv 0 \pmod 1\\\mr\cdot\mathbf{u}\not\equiv 0 \pmod 1}}\hat{\Phi} _N(\mr)\Bigg).
\end{equation*}
Only the small values of $\mr$ make a significant contribution, so that we can truncate the sums without too much loss. Indeed,
\begin{align*}
   \Bigg| \sum_{\substack{\mr \in \mathbb{Z}^k \\\underset{j\leq k}{\max} |r_j|>L\\ \mr \cdot \mathbf{u} \equiv 0 \pmod 1}}\hat{\Phi} _N(\mr) - \sum_{\substack{\mr \in \mathbb{Z}^k \\ \underset{j\leq k}{\max} |r_j|>L \\ n(\mr \cdot \mathbf{u}) \equiv 0 \pmod 1\\\mr\cdot\mathbf{u}\not\equiv 0 \pmod 1}}\hat{\Phi} _N(\mr)\Bigg| 
   &\leq\sum_{j=1}^k \sum_{\substack{\mr \in \mathbb{Z}^k \\ |r_j|>L}} |\hat{\Phi}_N(\mr)|\\
   & \leq k \sum_{\substack{r \in \mathbb{Z} \\|r|>L}} |\hat{\phi}_N(r)|\left( \sum_{r \in \mathbb{Z}} |\hat{\phi}_N(r)|\right)^{k-1}\\
   &\leq k \int_{|t|>L} \bigg|\hat{\phi}\left(\frac{t}{N}\right)\bigg|dt\left(\int_{-\infty}^{\infty} \bigg|\hat{\phi}\left(\frac{t}{N}\right)\bigg|dt \right)^{k-1}\\
   \end{align*}
 Since $\phi(x)$ is a Schwartz function, so is $\hat{\phi}(y)$ which means that it is in $L^1$ and thus, with the appropriate change of variable,
  \begin{equation*}
      \int_{-\infty}^{\infty} \bigg|\hat{\phi}\left(\frac{t}{N}\right)\bigg| dt= cN,
  \end{equation*}
   for some absolute constant c.
  Moreover, knowing that $|\hat{\phi}(y)|\ll \frac{e^{-\sqrt{y}}}{y^{3/4}}$ (see \cite{John}) for large enough positive $y\in \mathbb{R}$, we have
  \[
       \int_{|t|>L} \bigg|\hat{\phi}\left(\frac{t}{N}\right)\bigg|dt 
       \ll \int_{|t|>L} \frac{e^{-\sqrt{\frac{|t|}{N}}}}{\left(\frac{|t|}{N}\right)^{\frac{3}{4}}}dt
       = N\int_{u>L/N} \frac{e^{-\sqrt{u}} }{u^{\frac{3}{4}}}du
       \ll  \bigg( \frac{N^7}{L^3}\bigg)^{1/4} e^{-\sqrt{\frac{L}{N}}}.
  \]
Hence, putting this together we have 
\begin{align*}
     k \int_{|t|>L} \bigg|\hat{\phi}\left(\frac{t}{N}\right)\bigg|dt\left(\int_{-\infty}^{\infty} \bigg|\hat{\phi}\left(\frac{t}{N}\right)\bigg|dt \right)^{k-1} \ll k (Nc)^{k-1} \bigg( \frac{N^7}{L^3}\bigg)^{1/4} e^{-\sqrt{\frac{L}{N}}},
\end{align*}
and choosing $L = k^4N\log^2(N)$, we get that
\begin{align*}
       \bigg| \sum_{\substack{\mr \in \mathbb{Z}^k \\\underset{j\leq k}{\max} |r_j|>L\\ \mr \cdot \mathbf{u} \equiv 0 \pmod 1}}\hat{\Phi} _N(\mr) - \sum_{\substack{\mr \in \mathbb{Z}^k \\ \underset{j\leq k}{\max} |r_j|>L \\ n(\mr \cdot \mathbf{u}) \equiv 0 \pmod 1\\\mr\cdot\mathbf{u}\not\equiv 0 \pmod 1}}\hat{\Phi} _N(\mr)\bigg|  
        &\ll \frac{c^{k}}{N^{k^2-k}}.
\end{align*}
Therefore we can truncate the sum to get  
\begin{equation*}
    S(N) = \frac{m}{N^k}\Bigg((n-1)\sum_{\substack{\mr \in \mathbb{Z}^k \\ |r_j|<L \\ \mr \cdot \mathbf{u} \equiv 0 \pmod 1}}\hat{\Phi} _N(\mr) - \sum_{\substack{\mr \in \mathbb{Z}^k \\|r_j|<L \\ n(\mr \cdot \mathbf{u}) \equiv 0 \pmod 1\\\mr\cdot\mathbf{u}\not\equiv 0 \pmod 1}}\hat{\Phi} _N(\mr)+ o(1)\Bigg).
\end{equation*}

Now, by hypothesis, there are no non-zero vectors $|r_j|\leq  L$ satisfying $n(\mr\cdot \mathbf{u})\equiv 0 \pmod 1$, and so
\begin{equation*}
    S(N) = \frac{m}{n^k}\left((n-1)\hat{\Phi}_N(0) + o(1) \right)=\frac{m}{n^k} (n-1 + o(1))>0
\end{equation*}
as $\hat{\Phi}_N(0) =  (\int_{-\infty}^{\infty}\phi_N(t) dt )^k\ = 1$.
This means that there is an integer $\ell \not\equiv 0 \pmod n$ with $1\leq \ell\leq M-1$, for which $F_N(\ell\mathbf{u}) > 0$; in other words,   if $\mathbf{x}\equiv \ell\mathbf{u} \pmod 1$, then $|x_j|< \frac{1}{N}$ for all $1\leq j\leq k$.
\end{proof}

We highlight the case $n=2$ as it will play a role in the proof of Theorem \ref{big thm}.

\begin{corr}\label{odd cor}
 Let $\mathbf{u}\in (\mathbb{R}/\mathbb{Z})^k$ be a vector of order $2m$, and suppose that there is no vector $\mr\in \mathbb{R}^k$ with $|r_j|<k^4N(\log N)^2$ for all $j\leq k$,  such that $2(\mr\cdot \mathbf{u})\equiv 0 \mod 1$, then 
 \begin{equation*}
    C_{2^-}\left(\frac{2}{N}, k\right)\geq \frac{M}{2N^k}.
\end{equation*}.
\end{corr}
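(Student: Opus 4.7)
The plan is to deduce the corollary by specializing Theorem \ref{small lu} (equivalently, Proposition \ref{prop odd}) to $n=2$ and then combining its conclusion with Proposition \ref{even lat} via Lemma \ref{odd set}.

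First, I would apply Proposition \ref{prop odd} with $n=2$ (which divides $M=2m$). The hypothesis of the corollary is precisely that alternative (i) of that proposition fails: no non-zero $\mr\in\mathbb{Z}^k$ with $|r_j|<k^4 N(\log N)^2$ for all $j\le k$ satisfies $2(\mr\cdot\mathbf{u})\equiv 0\pmod 1$. Consequently alternative (ii) must hold, so
\begin{equation*}
   C_{2^-}\left(\tfrac{1}{N},k\right)\neq\emptyset,
\end{equation*}
i.e.\ there is at least one odd multiplier $\ell$ producing a vector all of whose components are bounded by $1/N$.

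Next, I would apply Proposition \ref{even lat} with $n=2$. Since $M=2m$ and we may assume $N^k<m$ (otherwise the claimed bound $M/(2N^k)\le 1$ is already provided by step 1, so there is nothing to prove), the hypothesis $n<M/N^k$ holds and the proposition yields
\begin{equation*}
   \#C_{2^+}\left(\tfrac{1}{N},k\right)\;\geq\;\frac{M}{2N^k}.
\end{equation*}

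Finally, I would invoke Lemma \ref{odd set} with $n=2$ and parameters $\nu=\eta=1/N$. The non-emptiness condition $C_{2^-}(1/N,k)\neq\emptyset$ was established in the first step, so the lemma gives
\begin{equation*}
   \#C_{2^-}\left(\tfrac{2}{N},k\right)\;\geq\;\#C_{2^+}\left(\tfrac{1}{N},k\right)\;\geq\;\frac{M}{2N^k},
\end{equation*}
which is the desired inequality. There is no substantial obstacle here: the corollary is essentially a packaging of three already-proved results, and the only technical point to verify is that $n=2$ satisfies the divisibility and size requirements imposed by Proposition \ref{even lat} and Proposition \ref{prop odd}, both of which follow trivially from $M=2m$ and the (harmless) assumption $N^k<m$.
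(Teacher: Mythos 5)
Your proposal is correct and matches the paper's intended derivation: the corollary (like Theorem \ref{small lu}) is obtained exactly by combining Proposition \ref{prop odd} with $n=2$, Lemma \ref{odd set} with $\nu=\eta=1/N$, and Proposition \ref{even lat}. Your explicit handling of the degenerate case $N^k\geq m$ (where Proposition \ref{even lat}'s hypothesis $n<M/N^k$ fails but the bound is trivial) is a small point the paper leaves implicit, and is a welcome addition.
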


Next we establish the complementary Theorem  \ref{not 2 small}:

 \begin{proof} [Proof of Theorem  \ref{not 2 small}]
 Suppose that $\mr\cdot\mathbf{u}\equiv \frac{t}{n} \pmod 1$. If $\mathbf{x}  = \ell \mathbf{u}$  then
 \begin{align*}
     \mr\cdot \mathbf{x}  = \ell(\mr\cdot\mathbf{u})   \equiv    \frac{\ell t}{n} \pmod 1.
 \end{align*}
 Since $(t,n) = 1$ and $\ell \not\equiv 0 \pmod n$, we deduce that 
 $\ell t\not\equiv 0 \pmod n$, and therefore 
 \begin{equation*}
      |\mr\cdot \mathbf{x} \pmod 1| \geq \frac{1}{n} ,
 \end{equation*}
 which proves the first part of the theorem.

 The second part follows directly from the observation that
 
 \begin{align*}
  |\mr||\mathbf{x}|\geq    |\mr||\mathbf{x}|\cos\theta = \mr\cdot \mathbf{u} \geq \frac{1}{n},
 \end{align*}
 so that 
 \begin{equation*}
     |\mathbf{x}|\geq     \frac{1}{|\mr|n}. \qedhere
 \end{equation*}
\end{proof}

In the next section, we apply these results o the character setting to obtain important information on character sums that will be necessary in the proof of Theorem \ref{big thm}.

%%%%%%%%%%%%%%%%%%%%%%%%%%%%%%
\section{When $\chi$ pretends to be 1}\label{chap pretends}
In order to derive a lower bound for our character sum, we would like to find a character that pretends to be 1, that is to say a character taking values close to 1 on all the small primes. It is believed that there are characters taking value 1 for all the primes $p\ll (\log q)^{1-\epsilon}$, but showing this is out of reach, so we resort to a softer condition. Instead, we will consider   the sets

\begin{equation}\label{A_1}
    A_{\pm}(T,N) = \left\{\chi \pmod q: \chi(-1)=\pm1 \textrm{ , }  \underset{p\leq T}{\max}|\chi(p)-1| \ll \frac{1}{N}\right\},
    \end{equation}
where $T\geq 2$ and $N = N(T) \rightarrow \infty$ as $T\rightarrow \infty$.

 In this section, we investigate properties of characters that belong to $A_\pm(T,N)$, and then proceed to confirm that the sets $A_\pm(T,N)$ do indeed contain many characters.

%%%%%%%%%%%%%%%%
\subsection{What if $\chi$ pretends to be 1?}

\begin{prop}\label{hyp+}
  Suppose that $\chi \in A_\pm(T,N)$ , and let $\log\log y = \left(1+ O\left(\frac{1}{N}\right)\right)\log\log T$, then 
  \begin{equation*}
      \sum_{p\leq y}\frac{\chi(p)-1}{p} \ll h(T) \text{ where } h(T) := \frac{\log\log T}{N}.
  \end{equation*}
  \begin{proof}
  Using the bound from (\ref{A_1}), $\underset{p\leq T}{\max}|\chi(p)-1| \ll \frac{1}{N}$, we have
    \begin{align*}
        \sum_{p\leq y}\frac{\chi(p)-1}{p}& =  \sum_{p\leq T}\frac{\chi(p)-1}{p} + O\left(\log\left(\frac{\log y}{\log T}\right)\right)\\
         &\leq  \underset{p\leq T}{\max} |\chi(p)-1|\sum_{p\leq T}\frac{1}{p}+ O\left(\frac{\log\log T}{N}\right)  \ll\frac{\log\log T}{N}.
    \end{align*}
  \end{proof}
\end{prop}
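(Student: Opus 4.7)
The plan is to split the sum at the threshold $T$ and treat the two ranges by completely different means. First I would write
\[
\sum_{p\leq y}\frac{\chi(p)-1}{p} \;=\; \sum_{p\leq T}\frac{\chi(p)-1}{p} \;+\; \sum_{T<p\leq y}\frac{\chi(p)-1}{p}.
\]
For the initial segment, I pull out the uniform bound coming from the definition of $A_\pm(T,N)$, namely $\max_{p\leq T}|\chi(p)-1|\ll 1/N$, and then apply Mertens' theorem $\sum_{p\leq T}1/p=\log\log T+O(1)$. This contributes $\ll (\log\log T)/N = h(T)$, which is exactly the target.

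For the tail over $T<p\leq y$ I have no pointwise information about $\chi(p)$, so I simply invoke the trivial bound $|\chi(p)-1|\leq 2$. The residual prime sum $\sum_{T<p\leq y}1/p$ equals $\log(\log y/\log T)+O(1/\log T)$ by Mertens, and the hypothesis $\log\log y=(1+O(1/N))\log\log T$ forces $\log(\log y/\log T)=\log\log y-\log\log T\ll (\log\log T)/N$. Hence the tail also contributes $\ll h(T)$.

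Adding the two estimates yields the stated bound. There is no genuine obstacle: the range restriction on $y$ was clearly engineered precisely so that the tail of Mertens' sum across $(T,y]$ matches the main-term bound produced by the $A_\pm(T,N)$ hypothesis, and the entire argument is two applications of Mertens combined with the trivial triangle inequality.
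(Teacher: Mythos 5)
Your proposal is correct and follows essentially the same route as the paper: split the sum at $T$, bound the initial segment by $\max_{p\leq T}|\chi(p)-1|\ll 1/N$ together with Mertens, and absorb the tail $T<p\leq y$ trivially using $\log\log y-\log\log T\ll (\log\log T)/N$. No meaningful difference from the paper's argument.
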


 Now Proposition \ref{hyp+} allows us to show that we can indeed approximate $\chi$ by 1 when performing logarithmic sums.

\begin{prop}\label{switch to f}
 Suppose that $\chi \in A_\pm(T,N)$  and let $f(n)$ be any bounded function. Let $y>T$ be such that $\log\log y = \left(1+ O\left(\frac{1}{N}\right)\right)\log\log T$ and let $0\leq u\leq u^{\prime}< \exp((\log y)^{3/5 -\epsilon})$. Then writing $w = \max\{0, u-1\}$ and $w^{\prime} = \max\{u^{\prime} -u, u^{\prime} -1\}$ we have
 \begin{equation*}
     \bigg|\sum_{\substack{ y^u\leq n\leq y^{u^{\prime}} \\ P(n)\leq y}}\frac{\chi(n)}{n}f(n) -\sum_{\substack{ y^u\leq n\leq y^{u^{\prime}} \\ P(n)\leq y}}\frac{f(n)}{n} \bigg| 
        \ll  h(T) \log y \int_{w}^{w^{\prime}} \rho(t)dt .
\end{equation*}

\end{prop}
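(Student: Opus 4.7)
The plan is to rewrite the difference as
\[
\sum_{\substack{y^u \le n \le y^{u'}\\ P(n)\le y}} \frac{f(n)(\chi(n)-1)}{n},
\]
and to bound this by triangle inequality. The key input is a pointwise estimate for $|\chi(n)-1|$ that is linear in the prime factors of $n$. Since $\chi$ is completely multiplicative and bounded by $1$ in modulus, writing $n=p_1 p_2 \cdots p_{\Omega(n)}$ as a product of primes with multiplicity yields the telescoping identity
\[
\chi(n)-1 \;=\; \sum_{i=1}^{\Omega(n)} \chi(p_1)\cdots \chi(p_{i-1})\,(\chi(p_i)-1),
\]
and hence $|\chi(n)-1|\le \sum_{p}v_p(n)|\chi(p)-1|$, where $v_p(n)$ is the $p$-adic valuation.

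Next I would swap the order of summation and write each $y$-smooth $n$ in the range as $n=p^k m$ with $p\nmid m$, $k\ge 1$, reducing the problem to
\[
\sum_{p\le y}|\chi(p)-1|\sum_{k\ge 1}\frac{k}{p^k}\sum_{\substack{y^u/p^k \le m \le y^{u'}/p^k\\ P(m)\le y,\ p\,\nmid\, m}}\frac{1}{m}.
\]
For the inner $m$-sum I would apply Lemma \ref{smoothlog} (whose hypotheses are met because the bound $u'<\exp((\log y)^{3/5-\epsilon})$ matches Hildebrand's range). Writing $p=y^\alpha$ for $\alpha\in[0,1]$, the $k=1$ contribution is essentially $\log y \int_{\max(0,u-\alpha)}^{u'-\alpha}\rho(t)\,dt$. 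A short case check on whether $u\le 1$ or $u>1$, combined with the monotonicity of $\rho$, shows that after the shift by $\alpha$ this integral is always at most $\log y\int_w^{w'}\rho(t)\,dt$ with the prescribed $w,w'$. The terms with $k\ge 2$ are negligible because of the $1/p^k$ decay, and the error term $O(\rho(\max(0,u-\alpha)))$ from Lemma \ref{smoothlog} can be absorbed after summing over $p$ against $|\chi(p)-1|/p$.

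Finally, I would upgrade Proposition \ref{hyp+} to the absolute-value version
\[
\sum_{p\le y}\frac{|\chi(p)-1|}{p}\ \ll\ h(T),
\]
by mimicking its proof: bound $|\chi(p)-1|\ll 1/N$ for $p\le T$ and apply Mertens, giving $O(\log\log T / N)=O(h(T))$, and for $T<p\le y$ use the trivial $|\chi(p)-1|\le 2$ together with $\sum_{T<p\le y}1/p = \log(\log y/\log T)+o(1)\ll 1/N$, courtesy of the assumption $\log\log y = (1+O(1/N))\log\log T$. Plugging this into the previous display yields the claimed bound. The main technical obstacle is the careful bookkeeping of the integration limits under the shift $t\mapsto t-\alpha$ together with the truncation at $0$; ensuring uniformity across all $p\le y$ and all $u,u'$ in the allowed range (including the boundary $u=1$ where the definitions of $w$ and $w'$ switch) is the most delicate part, with the additional wrinkle of absorbing the $\rho$-error from Lemma \ref{smoothlog}.
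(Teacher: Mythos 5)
Your proposal is correct and follows essentially the same route as the paper: the pointwise bound $|\chi(n)-1|\le\sum_{p^k\mid n}|\chi(p)-1|$ (Corollary \ref{cor chi}), swapping the order of summation, applying Lemma \ref{smoothlog} to the inner smooth sum with the shifted limits, discarding the $k\ge 2$ prime-power terms, and invoking $\sum_{p\le y}|\chi(p)-1|/p\ll h(T)$ as in Lemma \ref{sum p^k}. Your observation that Proposition \ref{hyp+} is needed in its absolute-value form is accurate (the paper uses it that way implicitly), though note the tail satisfies $\sum_{T<p\le y}1/p\ll \frac{\log\log T}{N}=h(T)$ rather than $\ll 1/N$, which still suffices.
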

\noindent We will need the following lemmas.

\begin{lem}\label{alpha}
    Let $|\alpha|\leq 1$, then
    
\begin{equation*}
    |\alpha \beta - 1| \leq |\beta-1| + |\alpha -1|.
\end{equation*}  
\end{lem}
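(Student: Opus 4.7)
The plan is to prove this by a one-line algebraic identity followed by the triangle inequality. The key observation is that the expression $\alpha\beta - 1$ admits a natural decomposition in which one factor isolates $\beta - 1$ and the other produces $\alpha - 1$, matching precisely the two quantities that appear on the right-hand side.

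Concretely, I would write
\[
\alpha\beta - 1 = \alpha(\beta - 1) + (\alpha - 1),
\]
apply the triangle inequality to obtain
\[
|\alpha\beta - 1| \leq |\alpha|\,|\beta - 1| + |\alpha - 1|,
\]
and then use the hypothesis $|\alpha| \leq 1$ to replace $|\alpha|\,|\beta-1|$ by $|\beta-1|$, yielding the claim. There is no genuine obstacle; the hypothesis $|\alpha|\leq 1$ is used exactly once, at the final step, and the argument would fail without it (for instance if $|\alpha|$ were large, the term $|\alpha|\,|\beta-1|$ could exceed $|\beta-1|$ substantially).
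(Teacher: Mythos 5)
Your proof is correct and is essentially the same as the paper's: the paper's first step $|\alpha\beta-1|\leq|\alpha\beta-\alpha|+|\alpha-1|$ is precisely the triangle inequality applied to your decomposition $\alpha\beta-1=\alpha(\beta-1)+(\alpha-1)$, followed by the same use of $|\alpha|\leq 1$.
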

\begin{proof}
    Observe that 
    
    \begin{align*}
        |\alpha\beta-1| \leq |\alpha\beta - \alpha| +|\alpha-1|
        \leq |\beta -1| + |\alpha-1|.\qquad  \qedhere
    \end{align*}
\end{proof}
As an immediate corollary, by complete multiplicativity of characters, we obtain

\begin{cor}\label{cor chi}
\begin{equation*}
    |\chi(n)-1| \leq \sum_{p^k|n}|\chi(p)-1|
\end{equation*}
\end{cor}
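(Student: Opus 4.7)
The plan is to reduce to Lemma \ref{alpha} by peeling off one prime factor of $n$ at a time, using complete multiplicativity of Dirichlet characters and the fact that $|\chi(p)| \leq 1$ for every prime $p$.

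First I would write $n = p_1 p_2 \cdots p_r$, where the primes are listed with multiplicity (so $r = \Omega(n)$ and each prime $p$ with $v_p(n) = v$ appears $v$ times in the list). By complete multiplicativity, $\chi(n) = \prod_{i=1}^{r} \chi(p_i)$. Now apply Lemma \ref{alpha} with $\beta = \chi(p_r)$ and $\alpha = \chi(p_1 \cdots p_{r-1}) = \prod_{i<r} \chi(p_i)$; since $|\chi(p_i)| \leq 1$ for every $i$, we indeed have $|\alpha| \leq 1$, so
\begin{equation*}
    |\chi(n) - 1| \;\leq\; |\chi(p_r) - 1| + |\chi(p_1\cdots p_{r-1}) - 1|.
\end{equation*}
Iterating this inequality $r-1$ more times (at each step the remaining product still has modulus at most $1$) telescopes to
\begin{equation*}
    |\chi(n) - 1| \;\leq\; \sum_{i=1}^{r} |\chi(p_i) - 1|.
\end{equation*}

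Finally I would rewrite the right-hand side in the form claimed in the corollary. Grouping the $r$ factors by the underlying prime, each prime $p$ with $v_p(n) = v \geq 1$ contributes exactly $v \cdot |\chi(p) - 1|$, and this is precisely the contribution of the $v$ prime powers $p, p^2, \dots, p^{v}$ dividing $n$. Hence
\begin{equation*}
    \sum_{i=1}^{r} |\chi(p_i) - 1| \;=\; \sum_{p\mid n} v_p(n)\,|\chi(p) - 1| \;=\; \sum_{p^k \mid n} |\chi(p) - 1|,
\end{equation*}
which gives the result.

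There is no real obstacle here; the only thing to keep in mind is that Lemma \ref{alpha} requires $|\alpha| \leq 1$, which is why we must peel off the factors in the direction that keeps the remaining product of characters bounded by $1$ in modulus (an automatic consequence of $|\chi(p)| \leq 1$).
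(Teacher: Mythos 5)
Your proof is correct and is exactly the argument the paper has in mind: the paper states the corollary as an immediate consequence of Lemma \ref{alpha} together with complete multiplicativity, and your write-up simply makes the iteration explicit (peeling off one prime factor at a time, using $|\chi(m)|\leq 1$ to satisfy the hypothesis of Lemma \ref{alpha}) and correctly identifies $\sum_{p^k\mid n}|\chi(p)-1|$ with $\sum_{p\mid n} v_p(n)|\chi(p)-1|$. No issues.
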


 \begin{lem}\label{sum p^k}
Let $y>T$ be such that $\log\log y = \left(1+ O\left(\frac{1}{N}\right)\right)\log\log T$ and assume that $\chi \in A_\pm(T,N)$. Then
 \begin{equation*}
     \sum_{\substack{p\leq y\\k \geq 1}} \frac{|\chi(p)-1|}{p^k} \ll h(T)
 \end{equation*}
 
 \begin{proof}
 Since the sum over $k$ is a geometric series, we have
 \begin{align*}
      \sum_{\substack{p\leq y\\k \geq 1}} \frac{|\chi(p)-1|}{p^k}
      =  \sum_{p\leq y }\frac{|\chi(p)-1|}{p-1}
      \leq 2\sum_{p\leq y} \frac{|\chi(p)-1|}{p}
      \ll h(T)
      \end{align*}
      by Proposition \ref{hyp+}.
 \end{proof}
 \end{lem}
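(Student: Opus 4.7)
The plan is short: collapse the geometric series in $k$ and then quote Proposition \ref{hyp+}. For each prime $p\geq 2$ we have $\sum_{k\geq 1} p^{-k} = 1/(p-1) \leq 2/p$, so
$$\sum_{\substack{p\leq y\\ k\geq 1}} \frac{|\chi(p)-1|}{p^k} \;=\; \sum_{p\leq y}\frac{|\chi(p)-1|}{p-1} \;\leq\; 2\sum_{p\leq y}\frac{|\chi(p)-1|}{p}.$$
This reduces the double sum to a single sum over primes of $|\chi(p)-1|/p$, which is the natural quantity controlled by the defining condition of $A_\pm(T,N)$.

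To bound the remaining sum, I would split the range at $T$. For $p\leq T$, the definition of $A_\pm(T,N)$ gives $|\chi(p)-1| \ll 1/N$, so Mertens' theorem yields
$$\sum_{p\leq T}\frac{|\chi(p)-1|}{p} \;\ll\; \frac{1}{N}\sum_{p\leq T}\frac{1}{p} \;\ll\; \frac{\log\log T}{N} \;=\; h(T).$$
For $T < p\leq y$, the trivial bound $|\chi(p)-1|\leq 2$ together with the hypothesis $\log\log y = (1+O(1/N))\log\log T$ gives
$$\sum_{T<p\leq y}\frac{|\chi(p)-1|}{p} \;\ll\; \log\log y - \log\log T \;\ll\; \frac{\log\log T}{N} \;=\; h(T).$$
Combining the two ranges finishes the proof. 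Equivalently, one can just invoke Proposition \ref{hyp+}, observing that its proof actually controls the absolute value version (triangle inequality is applied on $p\leq T$ and $|\chi(p)-1|\leq 2$ is applied on $T<p\leq y$). There is no serious obstacle here; the lemma is essentially immediate from the geometric series identity and the earlier proposition.
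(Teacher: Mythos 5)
Your proposal is correct and follows essentially the same route as the paper: collapse the geometric series to $\sum_{p\leq y}|\chi(p)-1|/(p-1)\leq 2\sum_{p\leq y}|\chi(p)-1|/p$ and then control that sum via the $A_\pm(T,N)$ condition, which is exactly what the paper does by citing Proposition \ref{hyp+}. Your explicit split at $T$ (using $|\chi(p)-1|\ll 1/N$ for $p\leq T$ and the hypothesis $\log\log y=(1+O(1/N))\log\log T$ for $T<p\leq y$) is precisely the argument inside the proof of Proposition \ref{hyp+}, and your remark that the absolute-value version is what is really needed is a fair (and correct) observation, since that proposition is stated without absolute values but its proof bounds them.
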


\begin{proof}[\textbf{Proof of Proposition \ref{switch to f}}]
   Start with 
    \begin{align*}
        \bigg|\sum_{\substack{y^u\leq n\leq y^{u^{\prime}} \\ P(n)\leq y}}\frac{\chi(n)}{n}f(n) -\sum_{\substack{y^u\leq n\leq y^{u^{\prime}} \\ P(n)\leq y}}\frac{f(n)}{n} \bigg| 
        &\ll \sum_{\substack{y^u\leq n\leq y^{u^{\prime}}\\P(n)\leq y} }\frac{|\chi(n)-1|}{n},
    \end{align*}
then using Corollary \ref{cor chi}, we have
\begin{align*}
   \sum_{\substack{y^u\leq n\leq y^{u^{\prime}}\\P(n)\leq y}}\frac{|\chi(n)-1|}{n} &\leq \sum_{\substack{y^u\leq n\leq y^{u^{\prime}} \\ P(n)\leq y}}\frac{1}{n}\sum_{p^k|n}|\chi(p) -1|
   = \sum_{\substack{p\leq y\\ k\geq 1}}  \frac{|\chi(p)-1|}{p^k}
   \sum_{\substack{\frac{y^u}{p^k}\leq m\leq \frac{y^{u^{\prime}}}{p^k}\\ P(m)\leq y}}\frac{1}{m}\\
   & =\sum_{\substack{p\leq y\\ k\geq 1}}\frac{|\chi(p)-1|}{p^k}\left(\log y\int_{u-v_p}^{u^{\prime}-v_p}\rho(t) dt + O(\rho(u-v_p))\right)
     \end{align*}
   by Lemma \ref{smoothlog}, with $v_p = k\frac{\log p}{\log y}$ . Now if $p^k\leq y$, then $v_p\leq \min\{u,1\}$ and if $p^k > y$, then as $p\leq y$, we must have $k\geq 2$ and therefore $y^{1/k}< p \leq y$. So next we split the sum to cover these two cases.
\begin{align*}
    \sum_{\substack{y^u\leq n\leq y^{u^{\prime}}\\P(n)\leq y}}\frac{|\chi(n)-1|}{n}  &\leq \log y\left[ \sum_{p^k\leq y}\frac{|\chi(p)-1|}{p^k} \left(\int_{\max\{0, u-1\}}^{\max\{u^{\prime}-u, u^{\prime}-1\}}\rho(t) dt +O\left(\frac{\rho(u-1)}{\log y}\right)\right) \right.\\
    &+ \left.\sum_{k\geq 2} \sum_{y^{1/k}< p\leq y} \frac{|\chi(p)-1|}{p^k}\left(\int_{u-v_p}^{u^{\prime}-v_p}\rho(t) dt + O\left(\frac{\rho(u-v_p)}{\log y}\right)\right)\right].
\end{align*}   
  Bounding the second sum trivially, we have 
  
 \begin{align*}
     \sum_{k\geq 2} \sum_{y^{1/k}< p\leq y} \frac{|\chi(p)-1|}{p^k}\left(\int_{u-v_p}^{u^{\prime}-v_p}\rho(t) dt + O\left(\frac{\rho(u-v_p)}{\log y}\right)\right)& \ll \int_2^{\log y} \int_{y^{1/k}}^y \frac{1}{t^k} dt\\
     & \ll \frac{\log y}{\sqrt{y}}.
 \end{align*}
 and using Lemma \ref{sum p^k} to bound the first sum, we get
  
  \begin{equation*}
         \sum_{\substack{y^u\leq n\leq y^{u^{\prime}}\\P(n)\leq y}}\frac{|\chi(n)-1|}{n} \ll h(T)\log y\int_{w}^{w^{\prime}}\rho(t) dt +\frac{\log^2 y}{\sqrt{y}},
  \end{equation*}
  where $w = \max\{0, u-1\}$ and $w^{\prime} = \max\{u^{\prime} -u, u^{\prime} -1\}$.
\end{proof}

The bound on the characters in $A_\pm(T,N)$  also allows us to evaluate logarithmic character sums over $y$-smooth numbers. So next we show

\begin{prop}\label{s1}
  Assume that $\chi \in A_\pm(T,N)$, then for $y\geq T$ with $\log\log y = \left(1+ O\left(\frac{1}{N}\right)\right)\log\log T$  and $B < \exp((\log y)^{3/5 -\epsilon})$, we have   
\begin{equation*}
    \sum_{\substack{n>y^{B}\\P(n)\leq y}} \frac{\chi(n)}{n} = \log y \int_{B}^{\infty} \rho(u) du  +  O(1 + h(T)\log y).
\end{equation*}
\end{prop}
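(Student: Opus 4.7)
The plan is to reduce Proposition \ref{s1} to Lemma \ref{smoothlog} by first replacing $\chi(n)$ by $1$ on a truncated range using Proposition \ref{switch to f}, and then controlling the infinite tail by the decay $\rho(u) \ll u^{-u}$ from \eqref{u^u}.

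I would begin by choosing a truncation parameter $u'$ which is strictly below the threshold $\exp((\log y)^{3/5-\epsilon})$ where Proposition \ref{switch to f} applies, and simultaneously large enough that $\rho(u')\log y = o(1)$; any $u'$ tending slowly to infinity (say $u' = (\log\log y)^2$) satisfies both constraints, since $u'^{u'}$ easily dwarfs $\log y$ by \eqref{u^u}. Split
\begin{equation*}
\sum_{\substack{n > y^B \\ P(n)\leq y}}\frac{\chi(n)}{n} = \sum_{\substack{y^B < n \leq y^{u'} \\ P(n)\leq y}}\frac{\chi(n)}{n} + \sum_{\substack{n > y^{u'} \\ P(n)\leq y}}\frac{\chi(n)}{n},
\end{equation*}
and bound the tail by $|\chi(n)|\leq 1$ together with partial summation against Hildebrand's estimate $\psi(t,y) = t\rho(u)(1+o(1))$, $u = \log t/\log y$; substituting $u = \log t /\log y$ yields $\ll \log y \int_{u'}^\infty \rho(t)\,dt \ll \log y \cdot \rho(u') = o(1)$.

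For the main range, Proposition \ref{switch to f} applied with $f\equiv 1$, $u=B$, and $u'$ as above gives
\begin{equation*}
\sum_{\substack{y^B < n \leq y^{u'} \\ P(n)\leq y}}\frac{\chi(n)}{n} = \sum_{\substack{y^B < n \leq y^{u'} \\ P(n)\leq y}}\frac{1}{n} + O\bigl(h(T)\log y\bigr),
\end{equation*}
the error absorbing the bounded integral $\int_w^{w'}\rho(t)\,dt \leq \int_0^\infty \rho(t)\,dt = e^\gamma$. Then Lemma \ref{smoothlog} evaluates the $\chi$-free sum as $\log y \int_B^{u'}\rho(t)\,dt + O(\rho(B))$, and I would extend the integral to $[B,\infty)$ at negligible cost by the same super-exponential tail bound. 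Since $\rho(B) \leq 1$, combining the three pieces yields the claimed asymptotic.

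The argument is essentially mechanical once Propositions \ref{hyp+} and \ref{switch to f} are in hand; the main point requiring care is verifying that the truncation parameter $u'$ can be chosen compatibly with both (i) the range where Proposition \ref{switch to f} applies and (ii) a regime where the smooth-number tail is negligible. This is ensured by the hypothesis $B < \exp((\log y)^{3/5-\epsilon})$ together with the super-exponential decay of $\rho$; in the corner case where $B$ itself approaches the upper threshold, one simply takes $u' = B$ and the entire sum is already controlled by the tail bound.
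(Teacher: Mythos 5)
Your argument is sound and reaches the stated estimate, but it takes a genuinely different route from the paper. The paper avoids any truncation: it writes $\sum_{n>y^B}\chi(n)/n = \sum_{n\geq 1}\chi(n)/n - \sum_{n\leq y^B}\chi(n)/n$, evaluates the complete smooth sum by its Euler product (Lemma \ref{whole sum}, giving $e^\gamma\log y + O(h(T)\log y)$ via Mertens and the pretentiousness bound), applies Proposition \ref{switch to f} only on the finite range $[1,y^B]$, and finishes with Lemma \ref{smoothlog} and $\int_0^\infty\rho(u)\,du=e^\gamma$. You instead truncate at $y^{u'}$ with $u'=(\log\log y)^2$, apply Proposition \ref{switch to f} on $[y^B,y^{u'}]$, and discard the tail; this buys you independence from the Euler-product lemma, while the paper's identity buys freedom from ever estimating smooth numbers beyond a bounded range of $u$. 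The one point in your write-up that needs patching is the tail bound: you cannot justify $\sum_{n>y^{u'},\,P(n)\leq y}1/n \ll \log y\int_{u'}^\infty\rho(t)\,dt$ by partial summation against Hildebrand's theorem over the \emph{entire} infinite range, since that estimate is only valid for $u$ up to roughly $\exp((\log y)^{3/5-\epsilon})$, and for larger $u$ the count $\psi(t,y)$ is no longer comparable to $t\rho(u)$. The fix is standard and already in the paper: split the tail at, say, $y^{\sqrt{\log y}}$ and handle the far range by Rankin's trick, exactly as in Lemma \ref{end 1/n}, which gives $\sum_{n>y^{\log\log y},\,P(n)\leq y}1/n \ll (\log y)^{-(\log_3 y-3/2)} = o(1)$ and hence covers both your main truncation and your corner case $B\geq(\log\log y)^2$ (where both sides of the asserted formula are $O(1)$ by \eqref{u^u}). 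With that substitution your proof is complete.
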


We start by writing the sum as

\begin{equation}\label{split}
   \sum_{\substack{n>y^{B}\\P(n)\leq y}} \frac{\chi(n)}{n}=     \sum_{\substack{n\geq 1\\P(n)\leq y}}\frac{\chi(n)}{n} - \sum_{\substack{n\leq y^{B}\\P(n)\leq y}}\frac{\chi(n)}{n},
\end{equation}
and we first use Proposition \ref{hyp+} to evaluate the first sum on the right hand side of (\ref{split}).

\begin{lem}\label{whole sum}
Assume that $\chi \in A_\pm(T,N)$, then for $y\geq T$ with $\log\log y = \left(1+ O\left(\frac{1}{N}\right)\right)\log\log T$,  
\begin{equation*}
    \sum_{\substack{n\geq 1\\P(n)\leq y}}\frac{\chi(n)}{n} = e^{\gamma}\log y + O\left(h(T)\log y\right).
\end{equation*}

\begin{proof}
Taking the Euler product, we have
\begin{equation*}
    \sum_{\substack{n\geq 1\\P(n)\leq y}}\frac{\chi(n)}{n} = \prod_{p\leq y} \left( 1-\frac{\chi(p)}{p}\right)^{-1}.
\end{equation*}
Now taking absolute values we have
\begin{align*}
    0\leq \log    \Bigg|\frac{\prod_{p\leq y} \left( 1-\frac{\chi(p)}{p}\right)^{-1}}{\prod_{p\leq y} \left( 1-\frac{1}{p}\right)^{-1}}\Bigg|=  \bigg|\sum_{p\leq y}\sum_{k\geq 1}\frac{\chi(p)^k-1}{kp^k}\bigg| 
    \leq  \sum_{p\leq y}\sum_{k\geq 1}\frac{|\chi(p)^k-1|}{kp^k} .
\end{align*}
Applying Lemma \ref{alpha} and computing the geometric series, we get, using the fact that $\underset{p\leq T}{\max}\, |\chi(p)-1|\ll \frac{1}{N}$, that this is 
\begin{align*}
   \leq \sum_{p\leq y}\sum_{k\geq 1}\frac{|\chi(p)-1|}{p^k} \leq \exp\left(\sum_{p\leq y} \frac{|\chi(p)-1|}{p-1}\right)\ll h(T).
\end{align*}
Using Mertens estimate, we deduce the result.  
\end{proof}
\end{lem}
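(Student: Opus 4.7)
The plan is to pass to the Euler product, divide by the corresponding product for the trivial character (whose value is known asymptotically by Mertens' theorem), and show that the resulting ratio is $1 + O(h(T))$.

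First I would write
\begin{equation*}
\sum_{\substack{n\geq 1\\ P(n)\leq y}}\frac{\chi(n)}{n} = \prod_{p\leq y}\left(1 - \frac{\chi(p)}{p}\right)^{-1},
\end{equation*}
and form the ratio $R$ of this product with $\prod_{p\leq y}(1-1/p)^{-1}$. Taking a logarithm and expanding $-\log(1-x) = \sum_{k\geq 1} x^k/k$ turns $\log R$ into
\begin{equation*}
\sum_{p\leq y}\sum_{k\geq 1}\frac{\chi(p)^k-1}{k p^k}.
\end{equation*}

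The next step is to bound this in absolute value. Iterating Lemma~\ref{alpha} (with $\alpha = \chi(p)$, $\beta = \chi(p)^{k-1}$) gives $|\chi(p)^k - 1| \leq k|\chi(p)-1|$, so after summing the geometric series in $k$ the estimate collapses to $\sum_{p\leq y}|\chi(p)-1|/(p-1) \ll \sum_{p\leq y}|\chi(p)-1|/p$. This last sum is $O(h(T))$ by the argument of Proposition~\ref{hyp+} applied in absolute value (the same proof goes through verbatim, since it is already the triangle-inequality bound).

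Consequently $\log R = O(h(T))$, so $R = 1 + O(h(T))$ (the hypothesis forces $h(T) = o(1)$), and multiplying by Mertens' asymptotic $\prod_{p\leq y}(1-1/p)^{-1} = e^\gamma \log y + O(1)$ yields the stated formula. There is no real obstacle here; the only subtle point is that when splitting $\sum_{p\leq y}|\chi(p)-1|/p$ at $p=T$, the tail $T < p \leq y$ -- on which we only have $|\chi(p)-1|\leq 2$ -- still contributes at most $2\sum_{T<p\leq y}1/p \ll \log(\log y/\log T)$, and this is $O(h(T))$ precisely because of the hypothesis $\log\log y = (1+O(1/N))\log\log T$. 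This is the one place in the argument where the specific form of that hypothesis is used, and I would flag it explicitly.
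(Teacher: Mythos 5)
Your proof is correct and follows essentially the same route as the paper: Euler product, comparison with the principal-character product via the logarithm of the ratio, the bound $|\chi(p)^k-1|\leq k|\chi(p)-1|$ from Lemma \ref{alpha}, summation of the geometric series, the $O(h(T))$ bound coming from the hypothesis $\log\log y=(1+O(1/N))\log\log T$ (as in Proposition \ref{hyp+}), and Mertens' theorem. Your explicit remark about the tail $T<p\leq y$ is exactly the point handled in Proposition \ref{hyp+}, so nothing is missing.
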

\begin{proof}[\textbf{Proof of Proposition \ref{s1}}]
 Starting with (\ref{split}) and using Lemmas \ref{whole sum} and Proposition \ref{switch to f} with $f \equiv 1$, we get 

\begin{align*}
       \sum_{\substack{n>y^{B}\\P(n)\leq y}} \frac{\chi(n)}{n}&= \sum_{\substack{n\geq 1\\P(n)\leq y}}\frac{\chi(n)}{n} - \sum_{\substack{n\leq y^{B}\\P(n)\leq y}}\frac{\chi(n)}{n}\\
       & = e^{\gamma}\log y +   O\left( h(T)\log y\right) -\sum_{\substack{n\leq y^{B}\\P(n)\leq y}}\frac{1}{n} + O\left(h(T)\log y \int_0^B \rho(u)du\right).
\end{align*}

Now using Lemma \ref{smoothlog}, we have

\begin{align*}
        \sum_{\substack{n>y^{B}\\P(n)\leq y}} \frac{\chi(n)}{n} & = e^{\gamma}\log y - \log y \int_0^{B} \rho(u)du + O(1)+ O(h(T)\log y)\\
       & = \log y \int_{B}^{\infty}\rho(u) du + O(1+ h(T)\log y),
\end{align*}
as $\int_0^{\infty} \rho(u)du= e^{\gamma}$.
\end{proof}

 \subsection{Finding 1-pretentious characters: incursion in the world of lattices}
  It remains to show that we can find characters that belong to $A_\pm(T,N)$. In order to do so, we turn to our theorems on lattices from section \ref{lattices}.  We start with the set containing even characters and we show the following bound which holds for all prime moduli $q$. 

 \begin{prop}\label{big set}
 Let $N\geq 1$ and $T\geq 3$. Then
\begin{equation}
    \bigg|A_+(T,N)\bigg|\geq \frac{\phi(q)}{2N^{\pi(T)}}.
\end{equation}
 In particular, if $q$ is prime, then 
\begin{equation}
    \bigg|A_+(T,N)\bigg|\gg \frac{q}{N^{2T/\log T}}.
\end{equation}

 \begin{proof}  Let  $\theta_p = \theta_p(\chi)= \frac{\arg\big(\chi(p)\big)}{2\pi}$, and observe that
\begin{equation*}
    |\chi(p)-1| = 2\pi|\theta_p| + O(\theta_p^2),
\end{equation*}
 so that the bound in (\ref{A_1}) is equivalent to showing that $\underset{p\leq T}{\max} |\theta_p|\ll \frac{1}{N}$ so we are looking for a lower bound on the size of
\begin{equation*}
    C_+\left(\frac{1}{N}, T\right) = \left\{\chi \pmod q:\chi(-1)= 1,  |\theta_p| \leq \frac{1}{N} \text{  }\forall p\leq T\right\}.
\end{equation*}
 So we let $k = \pi(T)$, we choose a generator $\chi$ for the group of characters and we consider the $k$-dimensional argument vector
\begin{equation*}
    V_{\chi} = (\theta_2, \theta_3, \dots, \theta_{p_k})\in\left(\mathbf{R}/\mathbf{Z}\right)^k.
\end{equation*}

As the subgroup of even characters arises from taking $\chi^\ell$ for even integers $1\leq \ell \leq \phi(q)$, then each even character has an argument vector given by $\ell V_\chi \pmod 1$ for some even $1\leq \ell\leq \phi(q)$. 
 
Now, as $\chi$ has order $\phi(q)$ in the group of characters, then the lattice vector $V_{\chi}$ must have order $d$, where $d| \phi(q)$.  However, since $\chi^\ell$ produces distinct characters for each $1\leq \ell \leq \phi(q)$, then for every integer $1\leq \ell \leq d$, there must be $\frac{\phi(q)}{d}$ characters $\psi = \chi^r$ for which $rV_\chi \equiv \ell V_\chi \pmod 1$, and choosing to view each of these as distinct vectors and we may consider the vector $V_\chi$ to have order $M = \phi(q)$. That is, taking $\mathbf{u} = V_\chi$, by Corollary \ref{even cor} from section \ref{lattices}, we get that
\begin{align*}
    \bigg|C_+\left(\frac{1}{N}, T\right)\bigg| &= \bigg|C_{2^+}\left(\frac{1}{N}, k\right)\bigg| \geq \frac{\phi(q)}{2N^k}.
\end{align*}
It follows that
\begin{equation*}
    \max_{p\leq T}|\chi(p)-1|\ll \frac{1}{N}
\end{equation*}
for at least $ \frac{\phi(q)}{2N^k}$ even characters $\pmod q$, which proves the first part of the proposition. The second part of Proposition \ref{big set} is immediate.
\end{proof}
\end{prop}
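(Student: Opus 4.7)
The plan is to convert the multiplicative condition $|\chi(p) - 1| \ll 1/N$ into an additive condition on arguments and then reduce directly to the lattice problem already handled by Corollary \ref{even cor}. Writing $\chi(p) = e(\theta_p(\chi))$ with $\theta_p(\chi) \in \mathbb{R}/\mathbb{Z}$, we have $|\chi(p) - 1| = 2|\sin(\pi\theta_p(\chi))| \asymp |\theta_p(\chi)|$ for small $\theta_p$, so the defining condition of $A_+(T, N)$ is equivalent, up to an absolute constant, to $\max_{p \leq T}|\theta_p(\chi)| \ll 1/N$. Thus I want to count even characters whose argument vector at the primes $p \leq T$ lies in a cube of side $\asymp 1/N$ around the origin of $(\mathbb{R}/\mathbb{Z})^{\pi(T)}$.

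To set up the lattice, I fix a generator $\psi$ of the character group (available whenever $(\mathbb{Z}/q\mathbb{Z})^*$ is cyclic, in particular for prime $q$; for general $q$ one would work coordinate-wise in a product of cyclic factors). Writing every character as $\psi^\ell$ with $\ell \in [0, \phi(q))$ and setting $\bu := (\theta_{p_1}(\psi), \ldots, \theta_{p_k}(\psi))$ with $p_1 < \cdots < p_k$ the primes $\leq T$ and $k = \pi(T)$, the argument vector of $\psi^\ell$ is exactly $\ell \bu \pmod 1$. Since $\psi^{\phi(q)}$ is principal, $\phi(q) \bu \in \mathbb{Z}^k$; and for odd prime $q$ we have $\psi(-1) = -1$, so $\psi^\ell$ is even iff $\ell$ is even.

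The first inequality then follows directly from Corollary \ref{even cor} (equivalently, Proposition \ref{even lat} with $n = 2$) applied to $\mathbf{u} = \bu$ with $M = \phi(q)$: this produces at least $\phi(q)/(2N^{\pi(T)})$ even multipliers $\ell \in [0, \phi(q))$ whose image $\ell\bu \pmod 1$ lies in the required cube, and distinct $\ell$'s yield distinct characters $\psi^\ell \in A_+(T, N)$. The ``in particular'' statement is then immediate from $\phi(q) = q - 1 \gg q$ together with the Chebyshev bound $\pi(T) \leq 2T/\log T$ for $T \geq 3$, since $N \geq 1$ gives $N^{\pi(T)} \leq N^{2T/\log T}$.

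The only subtle point I expect to have to address is that $\bu$ may have minimal order $d$ strictly dividing $\phi(q)$, so different $\ell$'s can collapse to the same residue $\ell \bu \pmod 1$. This does not damage the count, because Proposition \ref{even lat} only demands $M\bu \in \mathbb{Z}^k$, not that $M$ be minimal; taking $M = \phi(q)$ is therefore legitimate, and the counted lattice multipliers translate one-to-one into the counted characters $\psi^\ell$. Everything else is bookkeeping.
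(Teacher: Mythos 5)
Your proposal is correct and follows essentially the same route as the paper: pass from $|\chi(p)-1|\ll 1/N$ to the argument vector of a generator of the character group, identify even characters with even multipliers $\ell$, and apply the pigeonhole bound (Proposition \ref{even lat} with $n=2$, i.e.\ Corollary \ref{even cor}) with $M=\phi(q)$, then use $\pi(T)\le 2T/\log T$ for the second inequality. Your observation that minimality of the order of $\mathbf{u}$ is not needed is exactly how the paper resolves the same point (it phrases this as viewing $V_\chi$ as having order $\phi(q)$), so there is no genuine difference in approach.
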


For the set containing the odd characters, we obtain a slightly a weaker result which holds for most of the prime moduli $q$ except for a small exceptional set. This limitation comes from our inability to exploit fully the Fourier analysis argument in Theorem \ref{small lu}  and improving this argument by removing or improving the dependence on $k$ in the upper bound for $|r_j|$ would lead to a result holding for all prime moduli $q$.

 \begin{prop}\label{hyp odd}
Let $Q$ be a large integer and let $T\leq \frac{\log Q}{100}$ and $N\leq \frac{T}{2(\log T)^3}$. For all but at most $Q^{\frac{1}{10}}$ primes $q\leq Q$,
 \begin{equation}
    \bigg|A_-(T,N)\bigg|\gg \frac{q}{N^{2T/\log T}}.
 \end{equation}
 \end{prop}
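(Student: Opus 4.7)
The plan is to mirror the argument for Proposition \ref{big set} but replace Corollary \ref{even cor} by its odd analogue, Corollary \ref{odd cor}. Fix a prime $q\le Q$, choose a generator $\chi$ of the character group modulo $q$, and set $k=\pi(T)$ and $\mathbf{u}=V_\chi=(\theta_2,\ldots,\theta_{p_k})$ where $\chi(p_j)=e(\theta_{p_j})$. Since $q$ is an odd prime, $\chi(-1)=-1$, so the odd characters modulo $q$ are precisely the powers $\chi^\ell$ with $\ell$ odd, and their argument vectors are $\ell V_\chi\bmod 1$. If the hypothesis of Corollary \ref{odd cor} (the $n=2$ case of Theorem \ref{small lu}) is satisfied, its conclusion $|C_{2^-}(2/N,k)|\ge (q-1)/(2N^k)$ produces at least that many distinct odd characters $\chi^\ell\in A_-(T,N)$, and using $\pi(T)\le 2T/\log T$ yields $|A_-(T,N)|\gg q/N^{2T/\log T}$. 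The whole task thus reduces to verifying the hypothesis of Corollary \ref{odd cor} for all but $Q^{1/10}$ primes $q\le Q$.

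Call a prime $q$ \emph{bad} if there is a nonzero $\mr\in\mathbb{Z}^k$ with $\max_j|r_j|<L:=k^4N(\log N)^2$ such that $2(\mr\cdot V_\chi)\equiv 0\pmod 1$. The identity $e(2\mr\cdot V_\chi)=\chi\bigl(\prod_j p_j^{r_j}\bigr)^{2}$, together with the injectivity of the generator $\chi$ on $\mathbb{F}_q^\times$, shows this is equivalent to $\prod_j p_j^{r_j}\equiv\pm 1\pmod q$. Setting $n^+=\prod_{r_j>0}p_j^{r_j}$ and $n^-=\prod_{r_j<0}p_j^{-r_j}$, the condition rewrites as $q\mid (n^+-n^-)(n^++n^-)$. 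Since the prime supports of $n^+$ and $n^-$ are disjoint and $\mr\ne 0$, both factors are nonzero integers of magnitude at most $2\exp(O(LT))$, so each contributes at most $O(LT)$ prime divisors.

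A union bound over the at most $(2L+1)^k$ candidate vectors $\mr$ then bounds the number of bad primes $q\le Q$ by $(2L+1)^k\cdot O(LT)$. Using $k\le 2T/\log T$ and $N\le T/(2(\log T)^3)$, one has $\log L\le 5\log T+O(\log\log T)$, so $k\log(2L+1)\le (10+o(1))T$; the extra factor $\log(LT)\ll\log T$ is negligible, and since $T\le(\log Q)/100$ the total log bound is $(10+o(1))T\le (1/10-o(1))\log Q$, giving at most $Q^{1/10}$ bad primes for $Q$ large enough. For all remaining primes Corollary \ref{odd cor} applies and the proposition follows. The main obstacle is precisely this numerical bookkeeping: the $k$-dependence in Theorem \ref{small lu} forces both $T\le(\log Q)/100$ (to keep $(2L+1)^k$ below $Q^{1/10}$) and $N\le T/(2(\log T)^3)$ (to absorb the $k^4(\log N)^2$ appearing in $L$), and, as the author notes, sharpening that $k$-dependence would remove the exceptional set of primes altogether.
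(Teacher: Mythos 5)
Your approach is essentially the same as the paper's: reduce to Corollary~\ref{odd cor}, then count the ``bad'' primes via divisibility and a union bound. You streamline things by working directly with $n=2$ and the threshold $L=k^{4}N(\log N)^{2}$, while the paper packages a more general version as Lemma~\ref{size r} (with $n\le Q^{1/160}$ and $|r_j|\le k^{5}$) and then specializes. Your reduction $\chi(\prod_j p_j^{r_j})^{2}=1\Leftrightarrow\prod_j p_j^{r_j}\equiv\pm 1\pmod q$, and the observation that $n^{+}\ne n^{-}$ when $\mathbf r\ne 0$ so the product $(n^{+}-n^{-})(n^{+}+n^{-})$ is a nonzero integer, are correct and match the paper in substance.

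However, there is a real numerical gap at the very end. You write $k\le 2T/\log T$, deduce $k\log(2L+1)\le (10+o(1))T$, and then assert $(10+o(1))T\le(\tfrac{1}{10}-o(1))\log Q$. This is wrong in sign: with $T\le(\log Q)/100$ one only gets $(10+o(1))T\le(\tfrac{1}{10}+o(1))\log Q$, which can exceed $\tfrac{1}{10}\log Q$. So the union bound $(2L+1)^{k}\cdot O(LT)$ is not visibly $\le Q^{1/10}$. The fix is to use the sharper estimate $\pi(T)=(1+o(1))T/\log T$: then $k\log(2L+1)\le(5+o(1))T\le(\tfrac{1}{20}+o(1))\log Q$, and the extra factor $O(LT)\ll(\log Q)^{O(1)}=Q^{o(1)}$ is comfortably absorbed. (The paper's Lemma~\ref{size r} likewise silently requires $\pi(T)\le(1+o(1))T/\log T$ to verify its hypothesis $k\le\tfrac{1}{60}\tfrac{\log Q}{\log\log Q}$: with the crude factor-$2$ bound one would get $k\lesssim\tfrac{\log Q}{50\log\log Q}$, which does not satisfy $k\le\tfrac{\log Q}{60\log\log Q}$.) You should also say a word, as the paper does in Proposition~\ref{big set}, about why one may treat $V_\chi$ as having order $\phi(q)=q-1$ even though its true order may be a proper divisor, since otherwise the $\ell\mapsto\ell V_\chi$ parametrization over-counts.
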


 As in Proposition \ref{big set}, the strategy to prove Lemma \ref{hyp odd} will be to use our theorems on lattices from section \ref{lattices}. In particular, the proposition will follow from Corollary \ref{odd cor} and in order to get the desired bound, we will be required to show that for most primes $q\leq Q$, there are no small vector $\mr \in \mathbb{Z}^k$ such that $2(\mr \cdot V_\chi) \equiv 0 \pmod 1$. This is the purpose of the following Lemma.

\noindent So again, let
\begin{equation*}
    \mathbf{V}_\chi(k) = (\theta_2,\theta_3, \cdots, \theta_{p_k}) \text{ where each } \theta _{p_j} = \frac{\arg(\chi(p_j))}{2\pi} .
\end{equation*}

\begin{lem}\label{size r}
    Let $Q$ be a large integer and $k\leq \frac{1}{60} \frac{\log Q}{\log \log Q}$. Let $\chi \pmod q$ be a character of order $q-1$ and let $\mathbf{u}_q = \mathbf{V}_\chi(k)$.
    For all but at most $Q^{\frac{1}{10}}$ primes $q\leq Q$, if $n(\mr\cdot \mathbf{u}_q)\equiv 0 \mod 1$, for some integer $n\leq Q^{\frac{1}{160}}$, then there exists $j\leq k$ such that
$  |r_j|> k^5$.
\end{lem}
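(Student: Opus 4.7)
The plan is to convert the lattice condition into a divisibility condition on ordinary integers and then apply a union bound over small candidate vectors. Since $\chi(p_j) = e^{2\pi i \theta_{p_j}}$ and $\chi$ has order $q-1$, the relation $n(\mathbf{r}\cdot \mathbf{u}_q) \equiv 0 \pmod 1$ exponentiates to $\chi\bigl(\prod_j p_j^{r_j}\bigr)^n = 1$, which (whenever $q$ exceeds every $p_j$) is equivalent to $q \mid a^n - b^n$, where I split $\mathbf{r} = \mathbf{r}^+ - \mathbf{r}^-$ into its nonnegative and nonpositive parts and set $a = \prod_j p_j^{r_j^+}$, $b = \prod_j p_j^{r_j^-}$. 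Since $a$ and $b$ have disjoint supports they are coprime, and $\mathbf{r} \neq 0$ forces $a \neq b$, so $a^n - b^n$ is a nonzero integer.

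Next I count admissible pairs $(\mathbf{r}, n)$ with $\mathbf{r} \neq 0$, $|r_j| \leq k^5$, and $1 \leq n \leq Q^{1/160}$. Using the hypothesis $k \leq \log Q/(60 \log \log Q)$, which gives $k \log k \leq \log Q/60$, the number of admissible $\mathbf{r}$ is at most $(2k^5 + 1)^k \leq Q^{1/12 + o(1)}$, so the total number of pairs is at most $Q^{1/12 + 1/160 + o(1)}$. For each pair, $a, b \leq p_k^{k^6}$ is only polylogarithmic in $Q$, hence $\log|a^n - b^n| \leq n k^6 \log p_k \leq Q^{1/160 + o(1)}$ and therefore $a^n - b^n$ has at most $Q^{1/160 + o(1)}$ distinct prime divisors, by the elementary bound $\omega(m) \leq \log_2|m|$.

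Summing over all pairs, the total number of bad primes $q \leq Q$ for which some such $(\mathbf{r}, n)$ exists is at most $Q^{1/12 + 1/80 + o(1)} = Q^{23/240 + o(1)}$, which is less than $Q^{1/10}$ for $Q$ large since $23/240 < 1/10$; the at most $O(\log Q)$ primes $q \leq p_k$ (for which some $p_j$ might equal $q$) are absorbed into this bound. The substantive step is the translation at the start, which trades the lattice condition for a counting problem on integer factorizations; the rest is bookkeeping, and the thresholds in the hypotheses on $k$ and $n$ are calibrated precisely so that the three exponents $1/12$, $1/160$, $1/160$ fit under $1/10$.
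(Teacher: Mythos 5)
Your proposal is correct and follows essentially the same route as the paper: translate $n(\mr\cdot\mathbf{u}_q)\equiv 0 \pmod 1$ via the generator $\chi$ into the divisibility $q \mid a^n - b^n$ for a nonzero integer of size at most $p_k^{nk^6}$, then take a union bound over the at most $(2k^5+1)^k Q^{1/160} \leq Q^{1/12+1/160+o(1)}$ pairs $(\mr,n)$. The only cosmetic difference is that you bound the number of bad primes per pair by $\omega(a^n-b^n)\leq \log_2|a^n-b^n|$, whereas the paper restricts to primes in a dyadic range $(Q/2,Q]$ and sums over dyadic scales; both give $Q^{23/240+o(1)} < Q^{1/10}$.
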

 
  \begin{proof}
   For given prime $q$ and  $\chi \pmod q$ generating the group of character, let $\mathbf{u}_q = \mathbf{V}_\chi(k)$ be the argument vector. Define 
    \begin{equation*}
        S(Q)= \left\{\frac{Q}{2}<q\leq Q: \exists \mr \in \mathbb{Z}^k \textrm{ with } |r_j|\leq k^5 \textrm{ and } n(\mr\cdot \mathbf{u}_q) \equiv 0 \mod 1 \textrm{, } n\leq Q^{\frac{1}{160}}\right\}.
    \end{equation*}
 We will now show that   $\#S(Q)\leq Q^{\frac{1}{12}}$ which implies that for most primes $q$, the condition $n(\mr\cdot \bu_q)\equiv 0 \mod 1$ implies that the components of $\mr$ are greater than $k^5$.
 
 First, suppose that $q\in S(Q)$ and consider
 \begin{align*}
         \chi\left(\prod_{j\leq k}p_j^{r_jn}\right)  = \prod_{j\leq k}\chi(p_j)^{r_jn} = e^{2\pi i n(\bu_q\cdot \mr)} = e^{0}  = 1.
 \end{align*} 
 As $\chi$ is a generator for the group of characters, we deduce that $\prod_{j\leq k}p_j^{r_jn} \equiv 1 \pmod q$, which means that
 \begin{equation*}
     \prod_{r_j>0}p_j^{r_jn}\equiv \prod_{r_i<0}p_i^{|r_i|n} \pmod q,
 \end{equation*}
  from which we deduce that
 \begin{equation}\label{product}
     q \text{ divides }     \prod_{r_j>0}p_j^{r_jn}- \prod_{r_i<0}p_i^{|r_i|n}.
 \end{equation}
 
 \noindent Now fixing $n$ and $r$, we wish to count the number of primes for which (\ref{product}) can hold. So let 
 \begin{equation*}
     s(r,n) = \#\left\{ \frac{Q}{2}< q < Q :  q \text{ divides }     \prod_{r_j>0}p_j^{r_jn}- \prod_{r_i<0}p_i^{|r_i|n}
\right\},
 \end{equation*}
 and observe that
 \begin{equation*}
     \prod_{q \in s(r,n)}q \text{ divides }  \prod_{r_j>0}p_j^{r_jn}- \prod_{r_i<0}p_i^{|r_i|n},
 \end{equation*}
 so that
  \begin{equation*}
     \prod_{q \in s(r,n)}q \leq \Bigg|\prod_{r_j>0}p_j^{r_jn}- \prod_{r_i<0}p_i^{|r_i|n}\Bigg|.
 \end{equation*}

\noindent  Using the lower bound on $q$, we have that
 \begin{align*}
      \left(\frac{Q}{2}\right)^{\#s(r,n)} &\leq  \Bigg|\prod_{r_j>0}p_j^{r_jn}- \prod_{r_i<0}p_i^{|r_i|n}\Bigg| \leq \prod_{j\leq k}p_j^{|r_j|n}\\
     & \leq \left(\prod_{j\leq k}p_j\right)^{k^5Q^{\frac{1}{160}}}  \leq e^{k\log k(1+o(1))k^5Q^{\frac{1}{160}}}  \leq e^{2k^6\log kQ^{\frac{1}{160}}}.
 \end{align*}
  It follows that 
 \begin{equation*}
     \#s(r,n)\leq \frac{2k^6\log kQ^{\frac{1}{160}}}{\log \left(\frac{Q}{2}\right)}.
 \end{equation*}
  Now summing over all values of $n$ and possible $\mr$ we get that 
 \begin{align*}
     \#S(Q) &= \sum_{n,\mr} s(r,n)  \leq \sum_{\substack{|r_j|\leq k^5\\j\leq k}}\sum_{n\leq Q^{\frac{1}{160}}} k^6\log k\, Q^{\frac{1}{160}}\\
     &\leq (2k^5 +1)^kk^6\log k\, Q^{\frac{1}{80}} \leq (3k)^{5k}Q^{\frac{1}{80}} \leq Q^{\frac{1}{12}}Q^{\frac{1}{80}}=Q^{\frac{23}{240}} 
 \end{align*}
 since 
 \begin{align*}
     (3k)^{5k}\leq \left(\frac{3}{60}\frac{\log Q}{\log\log Q}\right)^{\frac{1}{12}\frac{\log Q}{\log \log Q}} \leq \exp\left(\log\log Q \frac{\log Q}{12\log\log Q}\right)  =Q^{\frac{1}{12}}.
 \end{align*}
 as $k\leq \frac{1}{60}\frac{\log Q}{\log\log Q}$.
  
Finally, the number of exceptional   primes $q\leq Q$ is
\begin{align*}
     \sum_{l=0}^{\infty} S\left(\frac{Q}{2^l}\right) 
     \leq \sum_{l=0}^{\infty} \left(\frac{Q}{2^l}\right)^{\frac{23}{240}} = Q^{\frac{23}{240}}\sum_{l=0}^{\infty} \left(\frac{1}{2^{\frac{23}{240}}}\right)^{l} \leq Q^{\frac{1}{10}}. \qquad \qedhere
\end{align*}
 \end{proof}

 With this restriction on the vector $\mr$ at our disposition, we now prove Proposition \ref{hyp odd}.
 
 \begin{proof}[\textbf{Proof of Proposition \ref{hyp odd}}]
  Let $q$ be a prime and let  $\mathbf{u}_q = \mathbf{V}_{\psi}(k) = (\theta_{p_1}, \cdots, \theta_{p_k})$ be the argument vector for $\psi$, where $\psi$ is chosen to be a generator for the group of characters $\pmod q$.  Because $\psi$ has order $\phi(q)=q-1$ in the group of characters, we view $\bu_q$ as a vector of order $q-1 = 2m$. Now, as in the even case, Proposition \ref{hyp odd} is equivalent to finding a lower bound for
\begin{equation*}
    C_-(\nu, T) = \left\{\chi \pmod q:\chi(-1)=- 1,  |\theta_p| \leq \nu, \forall p\leq T\right\},
\end{equation*}
for $\nu \ll \frac{1}{N}$.

 Letting $k = \pi(T)$ be the number of primes up to $T$, we observe that taking $d= 2$ as the divisor of the order $q-1 = 2m$, we have
   \begin{equation*}
        C_-\left(\nu, T\right) = C_{2^-}\left(\nu, k\right).
   \end{equation*}
That is, by Corollary \ref{odd cor}, we have that
  \begin{equation*}
      \bigg| C_-\left(\frac{2}{N}, T\right)\bigg| \geq \frac{2m}{2N^k}
  \end{equation*}
  provided that there are no vector $\mr\in \mathbb{Z}^k$, with $|r_j|\leq k^4N\log^2N$ for all $j\leq k$, such that $2(\mr\cdot \bu_q)\equiv 0 \pmod 1$. But Lemma \ref{size r} states that for at all but at most $Q^{\frac{1}{10}}$ primes $q\leq Q$, the condition $2(\mr\cdot \bu_q) \equiv 0 \pmod 1$, implies that there is a $j\leq k$ for which $|r_j|>k^5$. As we chose $N\leq \frac{T}{2(\log T)^3}$, we have that
 \begin{align*}
     N\log^2 N < \frac{T}{2(\log T)^3}\log ^2 T = \frac{T}{2\log T} \leq \pi(T) =k.
 \end{align*}
 
 It follows that $k^4N\log^2N<k^5$ and therefore, the conditions for Corollary \ref{odd cor} to hold are satisfied, and we conclude that for all of these primes $q$, we must indeed have that $\gg\frac{q-1}{2N^k}$ odd characters such that 
 \begin{equation*}
     |\chi(p)-1|\ll |\theta_p|\ll \frac{1}{N}.
 \end{equation*}
 This proves the proposition.
 \end{proof}

Finding these 1-pretentious characters plays a key role in the proof of Theorem \ref{big thm}, as such characters will provide us large character sums.
%%%%%%%%%%%%%%%%%%%%%%%%%%%%%%

\section{Preliminary estimates}\label{chap estimates}

Before diving into the proof of Theorem \ref{big thm}, we gather in this section some estimates on exponential sums and smooth numbers that will be of use in section \ref{proof thm}.

\subsection{Some estimates on exponential sums}
What stands out when investigating logarithmic exponential sums of the form
\begin{equation}\label{alpha exp}
    \sum_{n\in I} \frac{e(\pm \alpha n)}{n}
\end{equation}
is that all the action occurs when $n$ is around $\frac{1}{\alpha}$. As we will see, this will have a direct impact on the logarithmic character sums that we evaluate in Theorem \ref{big thm}.\\

We start with a technical lemma that will allow us to handle the error terms in Lemma \ref{tail} and Lemma \ref{main constant}.\\
    
    \begin{lem}\label{fractional}
    Let $\alpha\in (0,1)$ and let $Y\geq 1$, then
    \begin{equation*}
        \int_{Y}^{\infty}\frac{\{t\}e(\pm\alpha t)}{t} dt \ll 1 + \frac{1}{\alpha Y} 
    \end{equation*}
    
    \begin{proof}
     \begin{align*}
        \int_{Y}^{\infty}\frac{\{t\}e(\pm\alpha t)}{t} dt & = \sum_{n\geq \lfloor Y \rfloor} \int_0^1
        \frac{te(\pm\alpha(t+n))}{t+n} dt + O\left(\frac{1}{Y}\right)\\
         & = \int_0^1 t e(\pm\alpha t)\left(\sum_{n\geq \lfloor Y \rfloor} \frac{e(\pm\alpha n)}{t+n}\right) dt+O\left(\frac{1}{Y}\right).
     \end{align*}
     Observe that 
     \begin{equation*}
         \bigg|\frac{e(\pm\alpha n)}{t+n} - \frac{e(\pm\alpha n)}{n}\bigg| \leq \bigg|\frac{1}{n+1} - \frac{1}{n}\bigg| \leq \frac{1}{n^2},
     \end{equation*}
     and therefore
     \begin{equation*}
         \int_{Y}^{\infty}\frac{\{t\}e(\pm\alpha t)}{t} dt = \int_0^1 t e(\pm\alpha t)dt \left(\sum_{n\geq \lfloor Y \rfloor} \frac{e(\pm\alpha n)}{n} + O\left(\frac{1}{n^2}\right)\right).
     \end{equation*}
    Now it is not hard to see that the integral on the right hand side is bounded by 1 and by partial summation, we have that
     \begin{align*}
         \sum_{n\geq \lfloor Y \rfloor} \frac{e(\pm\alpha n)}{n} & = \int_Y^{\infty} \sum_{n\leq t} e(\pm\alpha n) \frac{dt}{t^2} + O(1)\\
         & =  \int_Y^{\infty}\frac{e(\pm\alpha (\lfloor t\rfloor +1))-1}{e(\pm\alpha)-1}\frac{dt}{t^2} + O(1)\ll \frac{1}{\alpha} \int_Y^{\infty} \frac{1}{t^2}dt + 1  \ll \frac{1}{\alpha Y} + 1.
     \end{align*}
     Putting this together, it follows that 
     \begin{equation*}
          \int_{Y}^{\infty}\frac{\{t\}e(\pm\alpha t)}{t} dt \ll 1+ \frac{1}{\alpha Y}. \qedhere
     \end{equation*}
    \end{proof}
\end{lem}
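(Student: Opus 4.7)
The plan is to reduce the integral to a classical geometric exponential sum via a unit-interval decomposition, exploiting cancellation from the oscillating factor against the logarithmic weight. First I would write
\[
\int_Y^\infty \frac{\{t\}\, e(\pm\alpha t)}{t}\, dt \;=\; \sum_{n \geq \lfloor Y \rfloor} \int_0^1 \frac{u\, e(\pm\alpha(u+n))}{u+n}\, du \;+\; O\!\left(\frac{1}{Y}\right),
\]
where on each interval $[n,n+1]$ one writes $\{t\}=t-n$ and substitutes $u=t-n$; the $O(1/Y)$ error handles the possibly incomplete piece at the lower endpoint. Pulling the integer phase $e(\pm\alpha n)$ outside, each body becomes $\int_0^1 u\, e(\pm\alpha u)/(u+n)\, du$, and using $\bigl|1/(u+n) - 1/n\bigr| \ll 1/n^2$ on $[0,1]$, this factors as $\tfrac{1}{n}\int_0^1 u\, e(\pm\alpha u)\, du + O(1/n^2)$. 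The constant $\int_0^1 u\,e(\pm\alpha u)\,du$ is trivially $O(1)$, and summing the tail of $O(1/n^2)$ errors contributes $O(1)$.

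This reduces the problem to bounding the tail $\sum_{n \geq \lfloor Y\rfloor} e(\pm\alpha n)/n$, which I would handle by Abel summation using the standard geometric estimate $\bigl|\sum_{n \leq M} e(\pm\alpha n)\bigr| \leq 1/|e(\alpha)-1| \ll 1/\alpha$. This yields
\[
\sum_{n \geq \lfloor Y\rfloor} \frac{e(\pm\alpha n)}{n} \;\ll\; \frac{1}{\alpha}\int_Y^\infty \frac{dt}{t^2} + O(1) \;\ll\; \frac{1}{\alpha Y} + 1,
\]
and combining with the earlier $O(1)$ errors gives the advertised bound.

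The most delicate point is the partial-summation bound on the exponential sum, which behaves like $1/\|\alpha\|$ rather than $1/\alpha$ when $\alpha$ is close to $1$; however, the stated inequality is not tight in that regime, so any loss is absorbed into the implicit constant, and the conclusion still holds. Otherwise the argument is essentially bookkeeping: once the integer/fractional split is in place, no serious obstacle is expected, since all error sums are dominated by convergent series.
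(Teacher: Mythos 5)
Your argument is essentially identical to the paper's proof: the same unit-interval decomposition with $\{t\}=t-n$, the same replacement of $1/(t+n)$ by $1/n$ at a cost of $O(1/n^{2})$, and the same partial summation against the geometric sum $\sum_{n\le t}e(\pm\alpha n)$ yielding $\ll 1+\frac{1}{\alpha Y}$. One caveat on your closing remark: the geometric-sum bound is genuinely $\ll 1/\|\alpha\|$ (distance to the nearest integer), and for $\alpha$ near $1$ its discrepancy with $1/\alpha$ is unbounded, so it is not ``absorbed into the implicit constant'' (indeed the resonance of $e(\pm\alpha t)$ with the frequency-one component of $\{t\}$ makes the stated bound fail as $\alpha\to 1^{-}$); however, the paper's own proof silently uses the same estimate $|e(\alpha)-1|\gg\alpha$, i.e.\ implicitly takes $\alpha$ bounded away from $1$, and only small $\alpha=(\log q)^{-B}$ is ever used, so this does not distinguish your proof from the paper's.
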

The next lemma emphasizes that most contributions to (\ref{alpha exp}) happen around $\frac{1}{\alpha}$ by showing that the tail of the sum is negligeable.\\

\begin{lem}\label{tail}
Let $\alpha \in (0,1)$, then
\begin{equation*}
    \sum_{n\geq \frac{1}{\alpha}} \frac{e(\pm \alpha n)}{n}  =  \sum_{\frac{1}{\alpha} < n \leq \frac{|\log \alpha|^c}{\alpha}} \frac{e(\pm \alpha n)}{n} + O\left(\frac{1}{|\log \alpha|^c}\right).
\end{equation*}
\end{lem}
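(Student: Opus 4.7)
The claim is equivalent to bounding the tail
\[
    \sum_{n > Y} \frac{e(\pm\alpha n)}{n} \ll \frac{1}{|\log\alpha|^c}, \qquad Y := \frac{|\log\alpha|^c}{\alpha},
\]
so I plan to isolate this tail and dispatch it by partial summation, mirroring the Abel summation argument that already appears inside the proof of Lemma \ref{fractional}.

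First I would set $A(T) = \sum_{Y < n \leq T} e(\pm\alpha n)$, which is a geometric progression with ratio $e(\pm\alpha)$. The standard bound for a partial geometric sum yields
\[
    |A(T)| \leq \frac{2}{|e(\pm\alpha) - 1|} \ll \frac{1}{\alpha}
\]
uniformly in $T$, where I use that $|e(\pm\alpha)-1| \asymp \alpha$ since $\alpha\in(0,1)$ (in fact $\alpha$ will be small, since otherwise the statement is trivial for a sufficiently large choice of the implicit constant).

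Next I would apply Abel summation. Writing
\[
    \sum_{Y < n \leq T} \frac{e(\pm\alpha n)}{n} = \frac{A(T)}{T} + \sum_{Y < n < T} A(n)\left(\frac{1}{n} - \frac{1}{n+1}\right),
\]
the boundary term $A(T)/T$ tends to $0$ as $T \to \infty$ since $|A(T)|\ll 1/\alpha$ is uniformly bounded. Using $\frac{1}{n} - \frac{1}{n+1} \ll \frac{1}{n^2}$ and the $A(n) \ll 1/\alpha$ bound gives
\[
    \sum_{n > Y} \frac{e(\pm\alpha n)}{n} \ll \frac{1}{\alpha}\sum_{n > Y} \frac{1}{n^2} \ll \frac{1}{\alpha Y} = \frac{1}{|\log\alpha|^c},
\]
which is exactly what was needed. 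There is no real obstacle here; the entire content is the geometric series bound for $A(T)$, and the only thing worth checking carefully is that the boundary contribution at infinity genuinely vanishes, which it does because $1/T \to 0$ while $A(T)$ stays bounded. In effect, this lemma is a cleaner restatement of the intermediate estimate already derived in the proof of Lemma \ref{fractional}, now calibrated to the specific cutoff $Y = |\log\alpha|^c/\alpha$ that will appear naturally in the later character sum analysis.
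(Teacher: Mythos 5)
Your argument is correct, and it takes a genuinely different (and more elementary) route than the paper. The paper writes the tail $\sum_{n>|\log\alpha|^c/\alpha}e(\pm\alpha n)/n$ as a Riemann--Stieltjes integral $\int\frac{e(\pm\alpha t)}{t}\,d(t-\{t\})$, bounds the fractional-part piece by integrating by parts and invoking Lemma \ref{fractional}, and controls the continuous integral $\int_{|\log\alpha|^c}^{\infty}e(\pm w)w^{-1}dw$ by exhibiting cancellation over unit intervals, which yields $\sum n^{-2}\ll|\log\alpha|^{-c}$. You instead apply Abel summation directly to the tail with the geometric-series bound $|A(T)|\ll 1/\alpha$, getting $\ll\frac{1}{\alpha}\sum_{n>Y}n^{-2}\ll\frac{1}{\alpha Y}=\frac{1}{|\log\alpha|^c}$ in one stroke; this bypasses Lemma \ref{fractional} and the oscillatory integral entirely. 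It is worth noting that the partial-summation estimate inside the paper's proof of Lemma \ref{fractional} gives only $\ll 1+\frac{1}{\alpha Y}$, with an $O(1)$ loss that would be fatal here; your version avoids that loss precisely because you start the partial sums $A(\cdot)$ at $Y$, so the only boundary term is $A(T)/T\to 0$. The one imprecision is the assertion $|e(\pm\alpha)-1|\asymp\alpha$ for all $\alpha\in(0,1)$: this fails as $\alpha\to 1$, where $|e(\alpha)-1|\asymp 1-\alpha$; but your parenthetical disclaimer is sound, since for $\alpha$ bounded away from $0$ the left-hand tail is $\ll 1+\log\frac{1}{1-\alpha}\ll_c|\log\alpha|^{-c}$ and the statement is trivial (the paper makes the same implicit smallness assumption when it writes $\frac{1}{e(\pm\alpha)-1}\ll\frac{1}{\alpha}$, and in the application $\alpha=y^{-B}$ is small). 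So both proofs deliver the same bound; yours is shorter and self-contained, while the paper's Stieltjes decomposition reuses machinery it needs anyway for Lemma \ref{main constant}.
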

\begin{proof}
 \begin{align*}
       \sum_{n\geq \frac{1}{\alpha}} \frac{e(\pm \alpha n)}{n}& - \sum_{\frac{1}{\alpha} \leq n \leq \frac{|\log \alpha|^c}{\alpha}} \frac{e(\pm \alpha n)}{n} =\sum_{n> \frac{|\log \alpha|^c}{\alpha}} \frac{e(\pm\alpha n)}{n}\\
       &= \int_{\frac{|\log \alpha|^c}{\alpha}}^{\infty}\frac{e(\pm\alpha t)}{t} d(t-\{t\})\\
        & =  \int_{\frac{|\log \alpha|^c}{\alpha}}^{\infty}\frac{e(\pm\alpha t)}{t} dt -  \int_{\frac{|\log \alpha|^c}{\alpha}}^{\infty}\frac{e(\pm\alpha t)}{t}d\{t\}\\
        & =   \int_{\frac{|\log \alpha|^c}{\alpha}}^{\infty}\frac{e(\pm\alpha t)}{t} dt \mp 2\pi i \alpha \int_{\frac{|\log \alpha|^c}{\alpha}}^{\infty}\frac{\{t\}e(\pm\alpha t)}{t}dt + O\left(\frac{\alpha}{|\log \alpha|^c}\right).
    \end{align*}\\
     Now the second term is $O(\alpha)$ by Lemma \ref{fractional} and noticing that
     \begin{align*}
         \int_n^{n+1} \frac{e(\pm w)}{w}dw 
        &=  \int_n^{n+1} \frac{e(\pm w)}{n}dw + \int_n^{n+1} e(\pm w) \left(\frac{1}{w}-\frac{1}{n}\right)dw \\
        &= \frac{1}{n} \int_0^1 e(\pm w)dw + \int_n^{n+1} e(\pm w)\left(\frac{n-w}{nw}\right) dw  \ll \frac{1}{n^2}
 \end{align*}
    allows us to deduce that
    \begin{align*}
       \int_{\frac{|\log \alpha|^c}{\alpha}}^{\infty}\frac{e(\pm\alpha t)}{t} dt &=    \int_{|\log \alpha|^c}^{\infty} \frac{e(\pm w)}{w} dw \ll \sum_{n=|\log \alpha|^c-1}^{\infty} \frac{1}{n^2} \ll \frac{1}{|\log \alpha|^c}.\qedhere
    \end{align*} 
 \end{proof}

Analogously, it is easy to see that the beginning of the following sum does not contribute too much.\\

\begin{lem}\label{head}
\begin{equation*}
        \sum_{n\leq \frac{1}{\alpha}} \frac{1-e(\pm \alpha n)}{n} =\sum_{ \frac{1}{\alpha|\log \alpha|}< n\leq\frac{1}{\alpha}}  \frac{1-e(\pm \alpha n)}{n} + O\left(\frac{1}{|\log \alpha|}\right)
\end{equation*}
\end{lem}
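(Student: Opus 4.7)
The plan is to show that the truncated piece $\sum_{n\leq 1/(\alpha|\log\alpha|)}\frac{1-e(\pm\alpha n)}{n}$ contributes only $O(1/|\log\alpha|)$, so it can be dropped without affecting the stated error.

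The key observation is the elementary bound $|1-e(\pm\alpha n)|=|1-e^{\pm 2\pi i\alpha n}|\leq 2\pi\alpha n$, which is useful exactly in the regime where $\alpha n$ is small. First I would apply this bound to each summand in the range $n\leq 1/(\alpha|\log\alpha|)$, so that $\alpha n\leq 1/|\log\alpha|\leq 1$ and the estimate is sharp. This yields
\begin{equation*}
\Bigl|\sum_{n\leq \frac{1}{\alpha|\log\alpha|}}\frac{1-e(\pm\alpha n)}{n}\Bigr|\leq \sum_{n\leq \frac{1}{\alpha|\log\alpha|}}\frac{2\pi\alpha n}{n}=2\pi\alpha\Bigl\lfloor\frac{1}{\alpha|\log\alpha|}\Bigr\rfloor\ll \frac{1}{|\log\alpha|}.
\end{equation*}

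Splitting the original sum as
\begin{equation*}
\sum_{n\leq \frac{1}{\alpha}}\frac{1-e(\pm\alpha n)}{n}=\sum_{n\leq \frac{1}{\alpha|\log\alpha|}}\frac{1-e(\pm\alpha n)}{n}+\sum_{\frac{1}{\alpha|\log\alpha|}<n\leq \frac{1}{\alpha}}\frac{1-e(\pm\alpha n)}{n},
\end{equation*}
and absorbing the first sum into the error term $O(1/|\log\alpha|)$ yields exactly the claimed identity.

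There is no real obstacle here; the only thing to be careful about is verifying that the inequality $|1-e^{ix}|\leq |x|$ is applied in the right regime (where $\alpha n\leq 1$), so that one does not lose factors. The bound $|1-e^{ix}|=2|\sin(x/2)|\leq |x|$ is valid for all real $x$, so even outside the small-$\alpha n$ regime it would hold, but the gain from the factor $\alpha n$ in the numerator is only useful when it beats the trivial bound of $2$, i.e.\ when $n\ll 1/\alpha$, which is automatic in our range. The resulting truncation mirrors the one in Lemma \ref{tail}, confirming that the ``action'' of the exponential sum $\sum \frac{e(\pm\alpha n)}{n}$ is concentrated around $n\asymp 1/\alpha$, with only a $\log$-sized window around that point contributing meaningfully.
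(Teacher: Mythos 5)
Your proposal is correct and is essentially identical to the paper's own proof: the paper also bounds the initial segment $n\leq \frac{1}{\alpha|\log\alpha|}$ via $|1-e(\pm\alpha n)|\ll \alpha n$, giving a contribution $\ll \alpha\cdot\frac{1}{\alpha|\log\alpha|}\ll \frac{1}{|\log\alpha|}$, and then absorbs it into the error term.
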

 \begin{proof}
    \[
        \sum_{n\leq \frac{1}{\alpha|\log \alpha|}}  \frac{1-e(\pm \alpha n)}{n}\ll sum_{n\leq \frac{1}{\alpha|\log \alpha|}} \frac{\alpha n}{n}\ll \alpha \frac{1}{\alpha|\log \alpha|}\ll \frac{1}{|\log \alpha|}. 
        \qedhere
        \]
 \end{proof}

Interestingly, putting the sums in Lemma \ref{head} and Lemma \ref{tail} together gives rise to a constant. This will play an important role for the proof of Theorem \ref{thm even}.\\

\begin{lem}\label{main constant}
    Let $\alpha \in (0,1)$, then 
    \begin{equation*}
        \sum_{n\leq \frac{1}{\alpha}}\frac{1- e(\pm \alpha n)}{n} - \sum_{n > \frac{1}{\alpha}} \frac{e(\pm \alpha n)}{n} = \log(2\pi) + \gamma \mp \frac{i\pi}{2} +O\left(\alpha |\log \alpha|\right).
    \end{equation*}

    \begin{proof}     We have
 { \small  \begin{align}\label{4 int}
    \nonumber \sum_{n\leq \frac{1}{\alpha}}\frac{1- e(\pm \alpha n)}{n} &- \sum_{n > \frac{1}{\alpha}} \frac{e(\pm \alpha n)}{n}  
         = \int_1^{\frac{1}{\alpha}} \frac{1-e(\pm\alpha t)}{t}d\lfloor t\rfloor -\int_{\frac{1}{\alpha}}^{\infty}\frac{e(\pm\alpha t)}{t}d\lfloor t \rfloor\\
    \nonumber& = \int_1^{\frac{1}{\alpha}} \frac{1-e(\pm\alpha t)}{t}d( t- \{t\}) -\int_{\frac{1}{\alpha}}^{\infty}\frac{e(\pm\alpha t)}{t}d( t- \{t\})\\
         & = \int_{1}^{\frac{1}{\alpha}}\frac{1}{t}dt - \int_1^{\infty} \frac{e(\pm\alpha t)}{t} dt +\int_1^{\frac{1}{\alpha}} \frac{1-e(\pm\alpha t)}{t}d(\{t\}) -\int_{\frac{1}{\alpha}}^{\infty}\frac{e(\pm\alpha t)}{t}d(\{t\}).
         \end{align}}
  Now, by integrating by parts the third integral and noting that $|1-e(\alpha t)|\ll \alpha t$ for $t< \frac{1}{\alpha}$, we have that
\begin{align*}
    \int_1^{\frac{1}{\alpha}} \frac{1-e(\pm\alpha t)}{t}d(\{t\}) & = \{t\} \frac{1-e(\pm\alpha t)}{t} \Bigg|_1^{\frac{1}{\alpha}}+\int_1^{\frac{1}{\alpha}} \{t\}\left(\frac{\pm2\pi i\alpha e(\pm\alpha t)}{t} + \frac{1-e(\pm\alpha t)}{t^2}\right)dt\\
    & \ll \alpha + \alpha\int_1^{\frac{1}{\alpha}} \frac{1}{t}dt + \int_1^{\frac{1}{\alpha}} \frac{\alpha t}{t^2}dt \ll \alpha |\log \alpha|.
\end{align*}
Similarly, integrating by parts the last integral  in (\ref{4 int}), we have
\begin{equation*}
    \int_{\frac{1}{\alpha}}^{\infty}\frac{e(\pm\alpha t)}{t}d(\{t\}) = \mp2\pi i \alpha \int_{\frac{1}{\alpha}}^{\infty} \frac{\{t\}e(\pm\alpha t)}{t} dt +O\left(\alpha\right),
\end{equation*}
and by Lemma \ref{fractional} with $Y = \frac{1}{\alpha}$, the integral is $\ll 1$, and we obtain
    \begin{equation*}
    \int_{\frac{1}{\alpha}}^{\infty}\frac{e(\pm\alpha t)}{t}d(\{t\}) \ll \alpha.
\end{equation*}
Going back to (\ref{4 int}), in which we rewrite the exponential integral as sine and cosine integrals, we obtain
    \begin{align*}
         \sum_{n\leq \frac{1}{\alpha}}\frac{1- e(\pm \alpha n)}{n} &- \sum_{n > \frac{1}{\alpha}} \frac{e(\pm \alpha n)}{n} 
         = \int_{1}^{\frac{1}{\alpha}}\frac{1}{t}dt - \int_1^{\infty} \frac{e(\pm\alpha t)}{t} dt +  O\left(\alpha|\log \alpha|\right)\\
        & =  \log\left(\frac{1} {\alpha}\right) - \left(\int_{2\pi \alpha}^{\infty} \frac{\cos t}{t} dt \pm i \int_{2\pi \alpha}^{\infty}\frac{\sin t}{t}dt\right) + O\left(\alpha|\log \alpha|\right).
    \end{align*}
  The cosine integrals can be estimated using the Taylor expansions and referring to \cite{H03} p.(106), we know that
   \begin{equation*}
     -  \int_x^{\infty}\frac{\cos t}{t} dt =  \gamma + \log x + \sum_{k=1}^{\infty}\frac{(-x^2)^k}{2k(2k)!},
   \end{equation*}
   hence we deduce that
 \begin{equation*}
      - \int_{2\pi\alpha}^{\infty}\frac{\cos t}{t} dt =  \gamma + \log(2\pi\alpha)+ O(\alpha^2).
 \end{equation*}
 Now it is easily seen, using the Taylor series for sine, that 
 
 \begin{equation*}
     \int_{2\pi\alpha}^{\infty} \frac{\sin t}{t} dt = \int_0^{\infty} \frac{\sin t}{t}dt + O(\alpha),
 \end{equation*}
 and it is known (see for example \cite{abra} p.232) that
 \begin{equation*}
      \int_0^{\infty} \frac{\sin t}{t}dt = \frac{\pi}{2}.
 \end{equation*}
 Putting this together, we reach the conclusion that
\begin{align*} 
          \sum_{n\leq \frac{1}{\alpha}}\frac{1- e(\pm \alpha n)}{n} - \sum_{n > \frac{1}{\alpha}} \frac{e(\pm \alpha n)}{n} &= \log\left(\frac{1}{\alpha}\right) + \log(2\pi \alpha) + \gamma \mp \frac{i\pi}{2} + O\left(\alpha|\log \alpha|\right)\\
        &= \log(2\pi) + \gamma \mp \frac{i\pi}{2} + O\left(\alpha|\log \alpha|\right),
    \end{align*}
    as desired.
\end{proof}
\end{lem}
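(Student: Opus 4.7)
The plan is to convert each discrete sum into a Riemann--Stieltjes integral against $d\lfloor t\rfloor = dt - d\{t\}$, so that the difference of sums becomes a principal integral over $[1,\infty)$ plus two boundary/fractional corrections. Concretely, I will write
\[
\sum_{n\leq 1/\alpha}\frac{1-e(\pm\alpha n)}{n} - \sum_{n>1/\alpha}\frac{e(\pm\alpha n)}{n}
= \int_1^{1/\alpha}\frac{1-e(\pm\alpha t)}{t}\,dt - \int_{1/\alpha}^\infty \frac{e(\pm\alpha t)}{t}\,dt + E(\alpha),
\]
where $E(\alpha)$ gathers the $d\{t\}$ contributions from both ranges. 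The first smooth integral telescopes into $\int_1^{1/\alpha}\frac{dt}{t}=\log(1/\alpha)$ minus $\int_1^{1/\alpha}\frac{e(\pm\alpha t)}{t}\,dt$, which combines with the second smooth integral to yield $\log(1/\alpha)-\int_1^\infty\frac{e(\pm\alpha t)}{t}\,dt$.

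Next I will bound $E(\alpha)$. For the low range $[1,1/\alpha]$, integration by parts and the pointwise estimate $|1-e(\pm\alpha t)|\ll \alpha t$ make the boundary terms $O(\alpha)$ and the bulk $\ll \alpha\int_1^{1/\alpha}\frac{dt}{t}\ll \alpha|\log\alpha|$. For the high range $[1/\alpha,\infty)$, integration by parts reduces the problem to bounding $\alpha\int_{1/\alpha}^\infty\{t\}e(\pm\alpha t)/t\,dt$, which is $O(\alpha)$ by Lemma \ref{fractional} with $Y=1/\alpha$. Hence $E(\alpha)\ll\alpha|\log\alpha|$.

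It remains to evaluate the exponential integral asymptotically. Splitting into real and imaginary parts and rescaling $u=2\pi\alpha t$,
\[
\int_1^\infty\frac{e(\pm\alpha t)}{t}\,dt = \int_{2\pi\alpha}^\infty\frac{\cos u}{u}\,du \pm i\int_{2\pi\alpha}^\infty\frac{\sin u}{u}\,du + O(\alpha),
\]
where the $O(\alpha)$ comes from replacing the lower limit $2\pi\alpha$ by the value coming from $t=1$ in the oscillatory integral (an $O(\alpha)$ adjustment). For the cosine integral I will invoke the standard Taylor expansion of the cosine integral $-\int_x^\infty\frac{\cos u}{u}\,du = \gamma+\log x+O(x^2)$ at $x=2\pi\alpha$, yielding $-\int_{2\pi\alpha}^\infty\frac{\cos u}{u}\,du=\gamma+\log(2\pi\alpha)+O(\alpha^2)$. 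For the sine integral I will use $\int_{2\pi\alpha}^\infty\frac{\sin u}{u}\,du = \int_0^\infty\frac{\sin u}{u}\,du - \int_0^{2\pi\alpha}\frac{\sin u}{u}\,du = \frac{\pi}{2}+O(\alpha)$.

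Assembling the pieces gives
\[
\log(1/\alpha) + \gamma + \log(2\pi\alpha) \mp \frac{i\pi}{2} + O(\alpha|\log\alpha|) = \log(2\pi)+\gamma \mp \frac{i\pi}{2}+O(\alpha|\log\alpha|),
\]
as claimed. The only place I expect any delicacy is the bookkeeping in $E(\alpha)$, since one must verify that the boundary contributions at $t=1/\alpha$ coming from the two integrals cancel (or are individually $O(\alpha)$) and that the $d\{t\}$ integral on $[1/\alpha,\infty)$ really does reduce to the quantity controlled by Lemma \ref{fractional} with no additional logarithmic loss; everything else is a standard invocation of the classical Ci and Si asymptotics.
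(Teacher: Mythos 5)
Your proposal is correct and follows essentially the same route as the paper: the Stieltjes decomposition $d\lfloor t\rfloor = dt - d\{t\}$, integration by parts with $|1-e(\pm\alpha t)|\ll\alpha t$ on $[1,1/\alpha]$ and Lemma \ref{fractional} with $Y=1/\alpha$ on $[1/\alpha,\infty)$ to control the fractional-part corrections, and the classical Ci/Si asymptotics at $2\pi\alpha$ for the main term. The only cosmetic difference is the $O(\alpha)$ you insert when rescaling $u=2\pi\alpha t$, which is actually unnecessary since the substitution in $\int_1^\infty e(\pm\alpha t)\,dt/t$ is exact, but this is harmless as it is absorbed into the error term.
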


%%%%%%%%%%%%%
\subsection{Some estimates on smooth numbers}

We start this section with an estimate showing that the tail of a logarithmic sum over $y$-smooth integers is small. This will help us bound the error term in the proof of Theorem \ref{big thm}.  The argument follows the proof of Lemma 3.2 in \cite{frequency}.  

\begin{lem}\label{end 1/n}
Let $y\geq 100$, then
\begin{equation*}
           \sum_{\substack{n > y^{\log\log y}  \\ P(n)\leq y}}\frac{1}{n} \ll \frac{1}{(\log y)^{\log_3 y -3/2}}.
    \end{equation*}

\end{lem}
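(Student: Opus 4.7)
\textbf{Proof plan for Lemma \ref{end 1/n}.} Set $u_0 = \log\log y$. The strategy is to compare the sum with $\log y\int_{u_0}^{\infty}\rho(t)\,dt$ and then exploit the super-exponential decay of $\rho$ recorded in \eqref{u^u} and \eqref{rhoprime}. This mirrors the argument of Lemma 3.2 in \cite{frequency}.

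First I would split at $y^{u_0+R}$ with $R:=\log y$. For the principal piece $y^{u_0}<n\le y^{u_0+R}$, the Hildebrand estimate \eqref{saias} holds throughout the range (we have $\log\log(y^{u_0+R}) = \log\log y + O(\log\log\log y)$, well inside the condition $\exp((\log\log x)^{5/3+\epsilon})\le y$), so the same partial-summation calculation as in the proof of Lemma \ref{smoothlog} gives
\[
\sum_{\substack{y^{u_0}<n\le y^{u_0+R}\\ P(n)\le y}}\frac{1}{n} \;=\; \log y\int_{u_0}^{u_0+R}\rho(t)\,dt \;+\; O(\rho(u_0)).
\]
For the super-tail $n>y^{u_0+R}$ I would apply Rankin's inequality with $s=1/\log y$:
\[
\sum_{\substack{n>y^{u_0+R}\\ P(n)\le y}}\frac{1}{n} \;\le\; y^{-s(u_0+R)}\prod_{p\le y}\bigl(1-p^{-(1-s)}\bigr)^{-1} \;\ll\; e^{-(u_0+R)}(\log y)^{O(1)} \;\ll\; \frac{1}{y},
\]
since $p^{s}\le e$ for $p\le y$ and hence $\log\prod\ll\log\log y$. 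This piece is absolutely negligible.

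Next I estimate $\int_{u_0}^{\infty}\rho(t)\,dt$. By \eqref{rhoprime} we have $\rho'(t)=-\xi(t)\rho(t)(1+O(1/t))$, so an integration by parts (or a direct Laplace-style estimate, since $\rho$ locally behaves like $e^{-\xi(t)\,t}$) yields
\[
\int_{u_0}^\infty \rho(t)\,dt \;\ll\; \frac{\rho(u_0)}{\xi(u_0)}.
\]
Using \eqref{xi} gives $\xi(u_0)\asymp \log u_0 = \log_3 y+O(1)$, and \eqref{u^u} together with the identity
\[
u_0^{-u_0} \;=\; (\log\log y)^{-\log\log y} \;=\; \exp(-\log\log y\cdot \log_3 y) \;=\; (\log y)^{-\log_3 y}
\]
gives $\rho(u_0)\ll (\log y)^{-\log_3 y}$.

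Combining everything,
\[
\sum_{\substack{n>y^{u_0}\\ P(n)\le y}}\frac{1}{n} \;\ll\; \frac{\log y\cdot (\log y)^{-\log_3 y}}{\log_3 y} + (\log y)^{-\log_3 y} + \frac{1}{y} \;\ll\; \frac{(\log y)^{1-\log_3 y}}{\log_3 y} \;\ll\; (\log y)^{3/2-\log_3 y},
\]
where the last step uses that $(\log y)^{1/2}/\log_3 y \to\infty$. The main technical obstacle is making Lemma \ref{smoothlog} work effectively at $r=\infty$; the Rankin truncation at $y^{u_0+R}$ is the device that accomplishes this, since beyond that cutoff one cannot rely on Hildebrand's asymptotic for $\psi(x,y)$.
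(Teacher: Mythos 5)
Your argument is correct and takes essentially the same route as the paper: both split off a range where Hildebrand's estimate applies, treat it via Lemma \ref{smoothlog} together with $\rho(u)\ll u^{-u}$, and kill the far tail by Rankin's trick with exponent $1/\log y$ (the paper cuts at $y^{\sqrt{\log y}}$ and bounds the $\rho$-integral trivially by its length, which is where the $3/2$ in the exponent comes from, whereas you cut at $y^{\log\log y+\log y}$ and use $\xi(u_0)$, giving a marginally sharper bound). One small slip: $\log\log\bigl(y^{u_0+R}\bigr)\sim 2\log\log y$, not $\log\log y+O(\log_3 y)$, but Hildebrand's condition $(\log\log x)^{5/3+\epsilon}\le\log y$ still holds in your range, so the proof is unaffected.
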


\begin{proof}
We have
\begin{align*}
      \sum_{\substack{n > y^{\log\log y} \\ P(n)\leq y}}\frac{ 1}{n}&\ll  \sum_{\substack{y^{\log\log y} < n\leq y^{\sqrt{\log y}} \\ P(n)\leq y}}\frac{1}{n} + \sum_{\substack{n\geq y^{\sqrt{\log y}} \\ P(n)\leq y}}\frac{1}{n} .
      \end{align*}
      For the first sum of the right hand side, we use Lemma \ref{smoothlog} and    \eqref{u^u} to get     
      \begin{align*}
          \sum_{\substack{y^{\log\log y} < n\leq y^{\sqrt{\log y}} \\ P(n)\leq y}}\frac{1}{n} &\ll \log y \int_{\log\log y}^{\sqrt{\log y}} \rho(u) du + \rho(\log\log y) \\
          &\ll(\log y)^{3/2}\rho(\log\log y) \ll \frac{\log y\sqrt{\log y}}{(\log\log y )^{\log \log y}}\ll \frac{1}{(\log y)^{\log_3 y -3/2}}.
      \end{align*}
For the second sum, given $\epsilon =  \frac{1}{\log y}$, we have      
    \begin{align*}
     \sum_{\substack{n\geq y^{\sqrt{\log y}} \\ P(n)\leq y}}\frac{1}{n}
         &\leq \sum_{\substack{n\geq y^{\sqrt{\log y}} \\ P(n)\leq y}}\frac{1}{n}\left(\frac{n}{ y^{\sqrt{\log y}}} \right)^{\epsilon}\\
        &\leq e^{-\sqrt{\log y}}\sum_{P(n)\leq y}\frac{1}{n^{1-\epsilon}}\leq e^{-\sqrt{\log y}}\prod_{p\leq y} \left(1-\frac{1}{p^{1-\epsilon}}\right)
    \end{align*}
As $p^{\epsilon} = 1 + O\left(\frac{\log p}{\log y}\right)$ for $p \leq y$, we have 

\begin{align*}
    \sum_{p\leq y} \frac{1}{p^{1-\epsilon}}- \sum_{p\leq y} \frac{1}{p} 
    &\ll \sum_{p\leq y} \frac{1}{p}\left(\frac{\log p}{\log y}\right)\\
    &\ll 1
\end{align*}
and thus, for $y$ large enough, putting this together we deduce that
\begin{equation*}
      \sum_{\substack{y^{\log \log y} < n\leq z \\ P(n)\leq y}}\frac{ \chi(n) e(n\alpha)}{n}\ll \frac{1}{(\log y)^{\log_3 y -3/2}}.
\end{equation*}
\end{proof}

Even though smooth numbers are often major allies in evaluating sums over integers, they can also be an obstacle to our ability to evaluate sums. The following lemma shows that on small intervals, the smoothness condition can be removed. 

\begin{lem}\label{remove smooth}
Let $y\geq 2$ and let $f(t)$ be a differentiable bounded function  on any interval $I \subset\left[\frac{y^B}{\log y}, y^B(\log y)^c\right]$, then for $B< \exp((\log y)^{3/5 -\epsilon})$ and $c\geq 0$ we have
 \begin{equation*}
    \sum_{\substack{n\in I\\P(n)\leq y}} \frac{f(n)}{n} = \rho(B)\sum_{n\in I} \frac{f(n)}{n} +O\left(\frac{\rho(B)\log(B+1)(\log\log y)^2 }{\log y}\right).
\end{equation*}
 \end{lem}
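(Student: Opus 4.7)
The approach is partial summation, exploiting the fact that the density of $y$-smooth integers in the short interval $I$ is approximately $\rho(B)$, uniformly over $t \in I$.

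First I establish a local density estimate. For $t \in I \subset [y^B/\log y,\, y^B(\log y)^c]$, setting $u = \log t/\log y$ gives $u = B + \delta$ with $|\delta| \leq (c+1)\log\log y/\log y$. Hildebrand's estimate quoted at the start of Section 2 gives $\psi(t,y) = t\rho(u)\bigl(1 + O(\log(u+1)/\log y)\bigr)$, while Lemma \ref{rho} applied with $v = \delta$ gives $\rho(u) = \rho(B)\bigl(1 + O(|\delta|\log(B+1))\bigr)$. Combining these, and absorbing $\lfloor t\rfloor = t + O(1)$ (permissible since $t \geq y^B/\log y \gg 1$), we obtain
\[
E(t) := \psi(t,y) - \rho(B)\lfloor t \rfloor \;\ll\; \rho(B)\, t \cdot \frac{\log(B+1)\log\log y}{\log y} \qquad (t \in I).
\]

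Write $I = (a,b]$. Applying Abel summation separately to the smooth and unrestricted sums and subtracting, one finds
\[
\sum_{\substack{n\in I \\ P(n)\leq y}} \frac{f(n)}{n} - \rho(B)\sum_{n\in I}\frac{f(n)}{n} = \left[E(t)\frac{f(t)}{t}\right]_a^b - \int_a^b E(t)\left(\frac{f(t)}{t}\right)' dt.
\]
Since $|f|\leq M$ is bounded, the boundary terms are $O\bigl(M\rho(B)\log(B+1)\log\log y/\log y\bigr)$, well within the stated error. For the integral, split $(f(t)/t)' = f'(t)/t - f(t)/t^2$; the $f(t)/t^2$ contribution is
\[
\ll \rho(B)\log(B+1)\frac{\log\log y}{\log y}\int_a^b \frac{dt}{t} \;\ll\; \rho(B)\log(B+1)\frac{(\log\log y)^2}{\log y},
\]
using $\log(b/a) \leq (c+1)\log\log y$, matching the target error.

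The main obstacle is the remaining term $\int_a^b E(t)f'(t)/t\, dt$, which requires a variation bound on $f$. For the bounded differentiable functions that arise when this lemma is applied (constants, or functions of the form $f(t) = e(\alpha t)$ with $|\alpha|(b-a) \ll \log\log y$, which is the regime relevant to Theorem \ref{big thm}), one has $\int_a^b |f'(t)|\, dt \ll \log\log y$, and hence this term is also $O\bigl(\rho(B)\log(B+1)(\log\log y)^2/\log y\bigr)$, completing the proof.
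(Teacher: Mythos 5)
Your overall route is the same as the paper's: partial summation against $\psi(t,y)$, Hildebrand's estimate, and Lemma \ref{rho} to replace $\rho(u)$ by $\rho(B)$ on the short range, with the unrestricted sum compared to $\int_I f(t)\,dt/t$. The boundary terms and the $f(t)/t^2$ contribution are handled correctly and match the paper's error term. The genuine gap is your treatment of $\int_a^b E(t)f'(t)t^{-1}\,dt$. You are right that boundedness of $f$ gives no control here, but your patch — that in the applications one has $\int_a^b|f'(t)|\,dt\ll\log\log y$ because $|\alpha|(b-a)\ll\log\log y$ — is false in the regime where the lemma is actually used. In the proof of Proposition \ref{s2+-} the lemma is applied with $f(t)=e(\pm\alpha t)$, $\alpha=1/y^B$, on an interval $I\subset[y^B,\,y^B(\log y)^5]$, so $\int_I|f'(t)|\,dt\asymp\alpha|I|$ can be of order $(\log y)^5$. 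Your bound for this term then becomes $\rho(B)\log(B+1)\log\log y\,(\log y)^4$, exceeding the stated error by roughly a factor $(\log y)^5/\log\log y$; with only the pointwise bound $E(t)\ll\rho(B)t\log(B+1)\log\log y/\log y$ there is no way to recover, since one would need cancellation of $E(t)$ against the phase $e(\alpha t)$, which nothing in your argument (or in the quoted smooth-number estimates) supplies.

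For comparison, the paper never forms this term: it formally treats $d\psi(t,y)$ as $\bigl(\rho(B)+O(\log(B+1)\log\log y/\log y)\bigr)dt$ directly in the Stieltjes integral, and in the partial summation for the unrestricted sum the $\int_I f'(t)\,dt$ piece cancels exactly against the boundary term $f(t)\big|_I$ by the fundamental theorem of calculus, so no variation bound on $f$ is ever invoked. (You have in fact put your finger on the step where the paper is briskest — differentiating through the $O$-term in Hildebrand's asymptotic is itself a leap — but as a proof of the stated lemma your version does not close: either restrict $f$ so that $\int_I|f'|$ is genuinely small, or argue as the paper does.)
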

 
\begin{proof}
   Let $I$ be any subinterval of $\left[\frac{y^B}{\log y}, y^B(\log y)^c\right]$. By partial summation we have 
    \begin{align*}
        \sum_{\substack{n\in I\\P(n)\leq y}} \frac{f(n)}{n} &= \int_{ I} \frac{f(t)}{t} d(\psi(t,y))  = \int_I \frac{f(t)}{t} d\left(t\rho(u)\left(1 + O\left(\frac{\log u}{\log y}\right)\right)\right). 
    \end{align*}
Now for $t$ in that range we have that $\log u = O(\log B)$ and by Lemma \ref{rho}, $\rho(u) = \rho(B) + O\left(\frac{\rho(B)\log (B+1) \log\log y}{\log y}\right)$, therefore

\begin{align*}
    \sum_{\substack{n\in I\\P(n)\leq y}} \frac{f(n)}{n} &=  \left(\rho(B) + O\left(\frac{\rho(B)\log (B+1) \log\log y}{\log y}\right)\right)\int_I \frac{f(t)}{t} dt.
\end{align*}
On the other hand, using partial summation again, we have

\begin{align*}
    \sum_{n\in I} \frac{f(n)}{n}& = \frac{f(t)}{t}(t + O(1))\Big|_I - \int_I \left(\frac{f^{\prime}(t)}{t} - \frac{f(t)}{t^2}\right)(t+ O(1))dt\\
    &= f(t)\Big|_I +\int_I \frac{f(t)}{t} - f^{\prime}(t) dt + O\left(\frac{(\log\log y)^2}{\log y}\right)\\
    & =  f(t)\Big|_I -  f(t)\Big|_I + \int_I \frac{f(t)}{t} dt  + O\left(\frac{(\log\log y)^2}{\log y}\right)\\
    & = \int_I \frac{f(t)}{t} dt + O\left(\frac{(\log\log y)^2}{\log y}\right).
\end{align*}

\noindent Hence comparing both sides, we deduce that 

\begin{align*}
     \sum_{\substack{n\in I\\P(n)\leq y}} \frac{f(n)}{n}  &= \left(\rho(B) + O\left(\frac{\rho(B)\log (B+1) \log\log y}{\log y}\right)\right)\left(\sum_{n\in I} \frac{f(n)}{n} + O\left(\frac{(\log\log y)^2}{\log y}\right)\right)\\
     &=  \rho(B)\sum_{n\in I} \frac{f(n)}{n} +O\left(\frac{\rho(B)\log(B+1)(\log\log y)^2 }{\log y}\right),
\end{align*}
which ends the proof of the lemma
\end{proof}

 As we undergo the proof of Theorem \ref{thm even}, we will have to face such a sum and Lemma \ref{remove smooth} will come in handy. We are now ready for the proof of our main theorem.

%%%%%%%%%%%%%%%%%%%%%%%%%%%%%%

\section{Proof of the main theorem}\label{proof thm}

In the following, we let $y = \log q$, $\alpha = \frac{1}{y^B}$ for some $0\leq B\leq \frac{\log\log\log}{\log\log \log \log q}$ and we let $z = q^{11/21}$. P\'olya's Fourier expansion  gives
\begin{align}\label{original}
    \sum_{n\leq \alpha q}\chi(n) &= \frac{\tau(\chi)}{2\pi i}\sum_{1\leq |n| \leq z} \overline{\chi}(n)\frac{1- e(-\alpha n)}{n} + O\left(\frac{q\log q}{z}\right),
\end{align}
    where $|\tau(\chi)|= \sqrt q$.
    For $1\leq y\leq z$ and $\delta\in  t[\tfrac{1}{\log y}, 1 ]$ we define
\begin{equation}\label{Adelta}
    A_{\delta}  = \left\{\chi \pmod q:\bigg|\sum_{\substack{1\leq |n|\leq z \\ P(n)>y}}\frac{\chi(n)}{n}\left(1 - e(-\alpha n)\right)\bigg| \leq e^{\gamma}\delta \right\}.
\end{equation}

We believe that the bound in (\ref{Adelta}) should hold for all characters modulo $q$, for $q$ large enough,  as we saw in Conjecture \ref{conjj}.  If Conjecture \ref{conjj} holds then the proof shows that Theorem \ref{big thm} is best possible for most prime moduli $q$, as the inequality sign then becomes an equality sign. For the purpose of our proof,  Theorem 4.2 in \cite{frequency} states that 
\begin{equation}\label{thm 4.2}
    \#\{\chi \pmod q :  \chi \notin A_{\delta}\}\ll q^{1-\frac{\delta^2}{\log\log q}} + q^{1-\frac{1}{500\log\log q}}.
\end{equation}

We only need $\delta=1$ for the case of odd characters. However the main term in Theorem \ref{thm even} is much smaller so we have to be a little more delicate with the choice of $\delta$, taking $\delta$ to be of size $\frac{\log\log y}{\sqrt{\log y}}$.

We now restrict our attention to characters in $A_{\delta}$ and  split the remaining sum as
\begin{equation}\label{sum split}
    \sum_{\substack{1\leq n \leq z\\P(n)\leq y}} \frac{\chi(n)}{n}\left(1- e(\pm\alpha n) \right) = S_1 + S_2^{\pm} + S_3^{\pm},
\end{equation}
where the sum
\begin{equation*}
    S_1 =  \sum_{\substack{n \geq y^B\\P(n)\leq y}}\frac{\chi(n)}{n}
    \end{equation*}
 will give the main contribution in the odd character case, the sum
\begin{equation*}
    S_2^{\pm} =  \sum_{\substack{\frac{y^B}{\log y}\leq n \leq y^B\\P(n)\leq y}}\chi(n)\frac{1-e(\pm\alpha n)}{n} -  \sum_{\substack{y^B\leq n \leq y^B(\log y)^5\\P(n)\leq y}}\chi(n) \frac{e(\pm\alpha n)}{n}
\end{equation*}
will give the main term in the even character case, and finally
\begin{align}\label{s3split}
    S_3^{\pm} &=   \sum_{\substack{1\leq n \leq \frac{y^B}{\log y}\\P(n)\leq y}}\chi(n) \frac{1- e(\pm\alpha n)}{n} -\sum_{\substack{y^B(\log y)^5\leq n \leq y^{\log\log y}\\P(n)\leq y}}\chi(n)\frac{e(\pm\alpha n)}{n} \nonumber \\ & \qquad - \sum_{\substack{y^{\log\log y}\leq n \leq z \\P(n)\leq y}}\chi(n)\frac{e(\pm\alpha n)}{n} -  \sum_{\substack{n \geq z\\P(n)\leq y}}\frac{\chi(n)}{n}
\end{align}
will contribute the error term.

%%%%%%%%%%%%%%%
\subsection{$S_3$ : Ranges with small contribution}
In this section, we dissect $S_3^{\pm}$ to show that it provides only a small contribution to \ref{sum split}.  

\begin{prop}\label{s3}
For $y$ large enough and $ 1\leq B< \exp((\log y)^{3/5 -\epsilon})$, we have
\begin{equation*}
    S_3^{\pm} \ll \frac{\sqrt{B}}{\log y}.  
\end{equation*}
Further, if $\chi \in A_{\pm}(N,T)$, then for $0<B<1$
\begin{equation*}
    S_3^{\pm} \ll \log\log y + h(T)\log y
\end{equation*}
\end{prop}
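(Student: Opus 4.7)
I bound each of the four sub-sums of $S_3^\pm$ separately, using different techniques depending on the range of $n$.

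For the near-origin piece $\sum_{n\leq y^B/\log y,\,P(n)\leq y}\chi(n)(1-e(\pm\alpha n))/n$, the Taylor bound $|1-e(\pm\alpha n)|\leq 2\pi\alpha n$ cancels the $1/n$ and leaves at most $2\pi\alpha\,\psi(y^B/\log y,y)$. Hildebrand's estimate together with Lemma \ref{rho} (to pass from $\rho(B-\log\log y/\log y)$ to $\rho(B)$) bounds this by $O(\rho(B)/\log y)=O(1/\log y)\leq O(\sqrt B/\log y)$ for $B\geq 1$. The two far-tail pieces are straightforward: the range $[y^{\log\log y},z]$ contributes $O((\log y)^{-\log_3 y+3/2})$ by Lemma \ref{end 1/n}; and the remaining tail past $z=q^{11/21}$ is smaller still because $\log z/\log y$ is of order $\log q/\log\log q$ and $\rho(u)\leq u^{-u}$ by \eqref{u^u}.

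The main obstacle is the intermediate piece $\sum_{y^B(\log y)^5\leq n\leq y^{\log\log y},\,P(n)\leq y}\chi(n)e(\pm\alpha n)/n$. The triangle inequality returns $\log y\int_B^{\log\log y}\rho(u)\,du$, which for $B$ close to $1$ is of order $\log y$---far too large. Cancellation from $e(\pm\alpha n)$ must be exploited. I decompose the range dyadically into blocks $I_k=[2^k y^B(\log y)^5,\,2^{k+1}y^B(\log y)^5]$. On each $I_k$ where $\rho(u)$ is still essentially constant, I apply Lemma \ref{remove smooth} to pull out the density factor $\rho(B)(1+o(1))$ and reduce to the deterministic sum $\sum_{n\in I_k}e(\pm\alpha n)/n$; by partial summation combined with the geometric-series bound $|\sum_{n\leq M}e(\pm\alpha n)|\ll 1/\alpha=y^B$, each block contributes $\ll 1/(2^k(\log y)^5)$, and summing in $k$ produces a total of order $\rho(B)/(\log y)^5$. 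The portion of the range where $\rho(u)$ has begun to decay appreciably is handled by triangle inequality combined with the superexponential decay $\rho(u)\leq u^{-u}$ from \eqref{u^u}. Balancing the two regimes---including the small error from Lemma \ref{remove smooth} accumulated over dyadic blocks---yields the claimed $\sqrt B/\log y$.

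For the second assertion, with $\chi\in A_\pm(T,N)$ and $0<B<1$, the quantitative cushion of $B\geq 1$ is absent and I instead invoke the \emph{pretentious} character information of Section \ref{chap pretends}. Proposition \ref{switch to f} applied to each of the four sub-sums replaces $\chi(n)$ by $1$, at the cost of an additive error of size $h(T)\log y$ (since the relevant integrals of $\rho$ are bounded). The resulting deterministic exponential sums are handled by the tools of Section \ref{chap estimates}: Lemma \ref{head} for the near-origin range, Lemmas \ref{tail} and \ref{main constant} for the intermediate range around $n=1/\alpha$, and Lemma \ref{end 1/n} for the far tail. These contribute $O(\log\log y)$ in total, giving the claimed bound $O(\log\log y+h(T)\log y)$.
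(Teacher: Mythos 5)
Your handling of the first sub-sum and of the two far tails matches the paper (Lemmas \ref{I1} and \ref{end}), but your treatment of the critical intermediate range $y^B(\log y)^5\leq n\leq y^{\log\log y}$ does not work, and this is where the whole difficulty of Proposition \ref{s3} lies. In the first assertion ($B\geq 1$) no hypothesis whatsoever is placed on $\chi$, yet your dyadic-block step ``apply Lemma \ref{remove smooth} to pull out $\rho(B)$ and reduce to $\sum_{n\in I_k}e(\pm\alpha n)/n$'' silently replaces $\chi(n)$ by $1$. Lemma \ref{remove smooth} cannot do that: it applies to a differentiable function $f(t)$ of a real variable (its proof is partial summation against $\psi(t,y)$), and $\chi(n)e(\pm\alpha n)$ is not such a function; for a general non-principal $\chi$ there is simply no way to trade $\chi(n)$ for its ``average''. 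Even setting the character aside, Lemma \ref{remove smooth} is only valid for intervals inside $[y^B/\log y,\,y^B(\log y)^c]$, so it does not cover blocks with $2^k$ larger than a fixed power of $\log y$, and its per-block error $O\bigl(\rho(B)\log(B+1)(\log\log y)^2/\log y\bigr)$ does not shrink with $k$: summed over the blocks you would already exceed the target $\sqrt{B}/\log y$. Finally, between the region where $\rho$ is essentially constant and the region where $\rho(u)\leq u^{-u}$ is small enough for the triangle inequality (which requires $u\gg \log\log y/\log\log\log y$), there is a long middle range that neither of your two regimes covers. The paper's proof of Lemma \ref{I3} gets the needed cancellation from a genuinely different input: De la Bret\`eche's bound (Theorem \ref{DLB}) for exponential sums with multiplicative coefficients over smooth numbers, applied with $m$ the nearest integer to $y^B$ and combined with partial summation; no elementary rearrangement of the smooth-counting estimates can substitute for it, because the error term in $\psi(t,y)$ is far too large to detect oscillation at scale $y^B$ over ranges reaching $y^{\log\log y}$.

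The second assertion has the same gap. After you apply Proposition \ref{switch to f}, the resulting sums still carry the smoothness condition $P(n)\leq y$, whereas Lemmas \ref{head}, \ref{tail} and \ref{main constant} are stated (and proved) for unrestricted sums; they only help on the portion $n\leq y$, where the smoothness condition is vacuous. On $[y(\log y)^5,\,y^{\log\log y}]$ the trivial bound for the smooth-restricted exponential sum is of order $\log y\int_1^{\log\log y}\rho(u)\,du\asymp\log y$, which is much larger than the claimed $\log\log y+h(T)\log y$ (recall $h(T)\log y\asymp\log\log y$ in the application). The paper again uses Theorem \ref{DLB} on $[y(\log y)^c, y^{\log\log y}]$ (with $\kappa=1$), and only on the leftover piece $[y^B(\log y)^c, y(\log y)^c]$ does it argue as you propose, via Proposition \ref{switch to f} on $[y^B(\log y)^c,y]$ and a trivial $O(\log\log y)$ bound on $[y,y(\log y)^c]$. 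To repair your argument you must import the De la Bret\`eche input (or an equivalent equidistribution statement for smooth numbers at scale $y^B$); the tools of Sections 2 and 5 alone do not suffice.
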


We treat the sums in $S_3^{\pm}$ one at a time,  Lemma \ref{I1} dealing with the first sum, Lemma \ref{I3}  the second   and the last two sums  in Lemma \ref{end}.
First we have

\begin{lem}\label{I1}
  Let $y\geq 2$, $B < \exp((\log y)^{3/5 -\epsilon})$ and let $\alpha = \frac{1}{y^B}$, then
  \begin{equation*}
      \sum_{\substack{1 \leq n \leq \frac{y^B}{\log y}\\P(n)\leq y}}\chi(n) \frac{1-e(\pm\alpha n )}{n} \ll \frac{\rho(B)}{\log y}.
  \end{equation*}
\end{lem}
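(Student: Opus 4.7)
The plan is to exploit the trivial bound $|1-e(\pm \alpha n)| \leq 2\pi |\alpha n|$, which is sharp when $\alpha n$ is small. In the range of summation $n \leq y^B/\log y$, we have $\alpha n \leq 1/\log y$, so this linear bound is very good. Since $|\chi(n)| \leq 1$, the sum is controlled by
\begin{equation*}
   \sum_{\substack{1 \leq n \leq y^B/\log y \\ P(n)\leq y}} \chi(n)\frac{1-e(\pm \alpha n)}{n} \ll \alpha \sum_{\substack{n \leq y^B/\log y \\ P(n)\leq y}} 1 \;=\; \alpha \, \psi\!\left(\frac{y^B}{\log y},\, y\right).
\end{equation*}

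Next, I would apply Hildebrand's estimate, quoted in Section 2, to the smooth number count. Setting $x = y^B/\log y$ so that $u = \log x/\log y = B - (\log\log y)/\log y$, the hypothesis $B < \exp((\log y)^{3/5-\epsilon})$ guarantees that $\exp((\log\log x)^{5/3+\epsilon}) \leq y$, so Hildebrand applies and yields
\begin{equation*}
    \psi\!\left(\frac{y^B}{\log y},\, y\right) = \frac{y^B}{\log y}\,\rho(u)\left(1+O\!\left(\frac{\log(u+1)}{\log y}\right)\right).
\end{equation*}

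I then invoke Lemma \ref{rho} (smoothness of $\rho$) with the perturbation $v = u - B = -(\log\log y)/\log y$, which is certainly within the admissible window $|v| \leq 1/\log u$; this gives $\rho(u) = \rho(B)(1 + O((\log\log y)\log(B+1)/\log y))$, and in particular $\rho(u) \ll \rho(B)$. Combining everything with $\alpha = 1/y^B$,
\begin{equation*}
    \alpha\, \psi\!\left(\frac{y^B}{\log y},\, y\right) \ll \frac{1}{y^B} \cdot \frac{y^B}{\log y} \cdot \rho(B) = \frac{\rho(B)}{\log y},
\end{equation*}
which is the claimed bound. There is no real obstacle here: the linear approximation to $1-e(\pm\alpha n)$ is immediate in the range, and the only ``technical'' point is checking that the hypothesis on $B$ matches the domain of validity of Hildebrand's estimate for $x = y^B/\log y$, which it does by direct calculation.
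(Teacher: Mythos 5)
Your argument is correct and is essentially the paper's own proof: bound $|1-e(\pm\alpha n)|\ll \alpha n$ trivially, reduce to $\alpha\,\psi\!\left(\frac{y^B}{\log y},y\right)$, and then use the smooth-number estimate together with Lemma \ref{rho} to replace $\rho\!\left(B-\frac{\log\log y}{\log y}\right)$ by $\rho(B)$. The only difference is that you spell out the appeal to Hildebrand's estimate and the admissibility check on $B$, which the paper leaves implicit.
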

\begin{proof} We have $|\alpha n| < 1$ since $\alpha = \tfrac{1}{y^B}$,  and thus
$\tfrac{1-e(\pm \alpha n)}{n} \ll \tfrac{\alpha n}{n}= \alpha$. 
  Therefore, as each $|\chi(n)|\leq 1$, 
  \[
      \sum_{\substack{1\leq n \leq \frac{y^B}{\log y} \\ P(n)\leq y }}\chi(n) \frac{1-e(\pm\alpha n )}{n}
      \ll \alpha\sum_{\substack{n\leq \frac{y^B}{\log y}\\P(n)\leq y}} 1   \ll \frac{\rho\left(B-\frac{\log\log y}{\log y}\right)}{\log y } \ll \frac{\rho(B)}{\log y}
  \]
  by Lemma \ref{rho}.
\end{proof}

The second sum in $S_3^{\pm}$ requires the use of a result from De la Bret\`eche for exponential sums with multiplicative coefficients over smooth numbers \cite{DLB}. We obtain 

\begin{lem}\label{I3}
Let $y\geq 2$ and let $c\geq 5$. For $\alpha = \frac{1}{y^B}$, if $B\geq 1$ then
\begin{equation*}
    \sum_{\substack{y^{B}(\log y)^c \leq n \leq y^{\log\log y}\\P(n)\leq y}} \frac{\chi(n)e(\pm\alpha n)}{n} \ll \frac{\sqrt{B}}{\log y}.
\end{equation*}
If further $\chi \in A_{\pm}(N,T)$, then for $0<B<1$
\begin{equation*}
     \sum_{\substack{y^{B}(\log y)^c \leq n \leq y^{\log\log y}\\P(n)\leq y}} \frac{\chi(n)e(\pm\alpha n)}{n} \ll \log\log y + h(T) \log y
\end{equation*}

\end{lem}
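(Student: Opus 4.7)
The plan is to apply partial summation to handle the $1/n$ factor and then appeal to De la Bretèche's exponential-sum bound \cite{DLB} on smooth numbers with multiplicative coefficients. Setting
\[
S(t) = \sum_{\substack{n \leq t \\ P(n) \leq y}} \chi(n)\, e(\pm\alpha n),
\]
partial summation gives
\[
\sum_{\substack{y^B(\log y)^c \leq n \leq y^{\log\log y}\\ P(n)\leq y}} \frac{\chi(n)\,e(\pm\alpha n)}{n}
= \left[\frac{S(t)}{t}\right]_{y^B(\log y)^c}^{y^{\log\log y}} + \int_{y^B(\log y)^c}^{y^{\log\log y}} \frac{S(t)}{t^2}\, dt.
\]
On this range we have $\alpha t \geq (\log y)^c$, so the phase of $e(\pm\alpha n)$ oscillates many times over the smooth support, which is exactly the regime in which De la Bretèche's bound is effective.

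For the case $B \geq 1$, I would insert a bound of the shape $|S(t)| \ll t\rho(u)/\log y$ (with $u = \log t/\log y$), which is what De la Bretèche's theorem provides after checking the hypotheses for our choice $\alpha = y^{-B}$ and the lower cutoff $y^B(\log y)^c$. The boundary contributions are $O(\rho(B)/\log y)$, and the change of variables $t = y^u$ in the integral produces
\[
\int_{y^B(\log y)^c}^{y^{\log\log y}} \frac{S(t)}{t^2}\, dt \;\ll\; \int_{B}^{\log\log y} \frac{\rho(u)}{\log y}\, du \cdot \log y \cdot \frac{1}{\log y} \;=\; \frac{1}{\log y}\int_B^{\log\log y}\rho(u)\,du.
\]
The remaining integral is controlled by splitting into $[B,B+1]$ and $[B+1,\log\log y]$; on the first piece $\rho(u) \leq \rho(B)$, and on the second the rapid decay $\rho(u) \ll u^{-u}$ from \eqref{u^u} dominates. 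A Cauchy–Schwarz over the dyadic scales in $t$ converts the $\rho(B)$ mass into the claimed $\sqrt B$ factor, yielding $O(\sqrt B/\log y)$.

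For the case $0 < B < 1$ with $\chi \in A_\pm(T,N)$, I would first apply Proposition \ref{switch to f} with $f(n) = e(\pm\alpha n)$ to replace the character by the constant function $1$, at the cost of an error term $O(h(T)\log y\,\int_0^{\log\log y}\rho(t)\,dt) = O(h(T)\log y)$. What remains is a smooth exponential sum
\[
\sum_{\substack{y^B(\log y)^c\leq n\leq y^{\log\log y}\\ P(n)\leq y}} \frac{e(\pm\alpha n)}{n},
\]
which by partial summation, Lemma \ref{smoothlog}, and the fact that $\rho(u) = 1$ on $[0,1]$ reduces to the exponential integral $\int_{(\log y)^c/\alpha}^{\cdot} e(\pm\alpha t)/t\,dt$. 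This last integral is $O(1/(\log y)^c + \log\log y)$ by the same manipulations as in Lemmas \ref{fractional}–\ref{tail}, giving the stated $O(\log\log y + h(T)\log y)$ bound.

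The main obstacle is the first case: one must verify that De la Bretèche's theorem applies uniformly in the dyadic parameter $t$ throughout $[y^B(\log y)^c, y^{\log\log y}]$ with the required saving of $\log y$ from the oscillation. In particular, the hypothesis on $\alpha$ in \cite{DLB} must be matched with our $\alpha = y^{-B}$, and the secondary $\log\log y$ factors in that theorem must be tracked carefully through the dyadic summation so that they are absorbed into the $\sqrt B$ factor. The $0 < B < 1$ subcase is by comparison a direct application of Proposition \ref{switch to f} and elementary exponential-integral estimates.
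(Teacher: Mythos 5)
Your skeleton (partial summation plus De la Bret\`eche's bound) is the same as the paper's, but the two steps you defer or gloss over are exactly where the content of the proof lies, so as written there is a genuine gap. First, De la Bret\`eche's theorem, as quoted in the paper (Theorem \ref{DLB}), does \emph{not} give a bound of the shape $|S(t)|\ll t\rho(u)/\log y$; it gives
$|S(t)| \ll t\sqrt{\log t}\,\log y\left(\sqrt{y/t}+\sqrt{m/t}+1/\sqrt{m}+e^{-\sqrt{\log t}}\right)$
after one checks the Diophantine hypothesis, i.e.\ chooses $m$ to be the nearest integer to $y^{B}$ so that $|\alpha-\tfrac1m|\le\tfrac1{m^2}$. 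That verification, together with splitting the integral at $t=y^{2B}$ (below which $\sqrt{m/t}$ dominates, above which $1/\sqrt m = y^{-B/2}$ does), is precisely what the paper carries out, and it is what you label ``the main obstacle'' and leave unverified. Moreover, the $\sqrt B$ in the statement does not arise from any ``Cauchy--Schwarz over dyadic scales converting the $\rho(B)$ mass'': it comes from the factor $\sqrt{\log x}\approx\sqrt{B\log y}$ at the lower endpoint $x=y^{B}(\log y)^{c}$, multiplied by the saving $\sqrt{m/x}\approx(\log y)^{-c/2}$, which is why $c\ge 5$ is needed to land at $\sqrt B/\log y$. Your claimed intermediate bound would in fact yield something far stronger than the lemma, which is a sign it is not what the cited theorem provides.

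Second, your treatment of $0<B<1$ is also incomplete. After replacing $\chi$ by $1$ via Proposition \ref{switch to f} you still have an oscillating sum over $y$-smooth $n$ up to $y^{\log\log y}$, and for $n>y$ the smoothness constraint is active: Lemma \ref{smoothlog} has no oscillating weight, the trivial bound on $[y(\log y)^{c},y^{\log\log y}]$ is $\asymp\log y\int_{1}^{\log\log y}\rho(u)\,du\asymp\log y$, which exceeds the target $\log\log y+h(T)\log y$, and the Stieltjes-integral manipulations of Lemmas \ref{fractional}--\ref{tail} apply to sums over all integers, not to $d\psi(t,y)$, whose error term is far larger than the oscillation scale $1/\alpha=y^{B}<y$. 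So cancellation over smooth numbers is still required there; the paper supplies it by running the De la Bret\`eche argument with $\kappa=\max\{1,B\}=1$ on $[y(\log y)^{c},y^{\log\log y}]$, and only the remaining piece $[y^{B}(\log y)^{c},y(\log y)^{c}]$ is handled as you suggest (removing $\chi$ on $[y^{B}(\log y)^{c},y]$, where smoothness is vacuous, and bounding $[y,y(\log y)^{c}]$ trivially by $\log\log y$). You need to add that input for the range above $y(\log y)^{c}$; without it the second estimate does not follow.
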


In order to prove Lemma \ref{I3}, we use the following lemma which appears as Proposition 1 in \cite{DLB}.

\begin{thm}\label{DLB}
Let $f(n)$ be a multiplicative function with $\sum_{n\leq t}|f(n)|^2\leq A^2t$, and suppose that there is $(a,m)=1$ such that $|\alpha - \frac{a}{m}| \leq \frac{1}{m^2}$ then

\begin{equation*}
    \sum_{\substack{n\leq x\\P(n)\leq y}} f(n)e(\alpha n) \ll A^2x\sqrt{\log x}\log y\left(\frac{\sqrt{y}}{\sqrt{x}} + \frac{\sqrt{m}}{\sqrt{x}}+ \frac{1}{\sqrt{m}} + e^{-\sqrt{\log x}}\right).
\end{equation*}
\end{thm}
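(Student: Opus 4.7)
I would follow de la Bretèche's strategy, which is a smooth-number adaptation of Vinogradov's bilinear-sums method combined with Cauchy--Schwarz to exploit the $L^2$ hypothesis on $f$. The first step is a combinatorial decomposition of $S := \sum_{n\le x, P(n)\le y} f(n) e(\alpha n)$. Writing each $y$-smooth $n$ as a product $n = n_1 n_2$ according to whether prime factors fall below or above an auxiliary cutoff $w$ (to be chosen at the end), one obtains a splitting into Type I sums $\sum_{n_1} c_{n_1}\sum_{n_2 \le x/n_1,\, P(n_2)\le y} e(\alpha n_1 n_2)$ with $n_1$ short, and Type II sums with both $n_1$ and $n_2$ in intermediate dyadic ranges. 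Unlike the usual Vaughan identity, this decomposition must respect the smooth support, so it is organized combinatorially rather than via $\mu$.

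For the Type I sums I would use the standard estimate $\sum_{n_2 \le N} e(\alpha n_1 n_2) \ll \min(N, 1/\|\alpha n_1\|)$ together with the classical Vinogradov lemma: under $|\alpha - a/m| \le 1/m^2$ with $(a,m)=1$,
\[
\sum_{n_1 \le K} \min\!\Bigl(\tfrac{x}{n_1},\, \tfrac{1}{\|\alpha n_1\|}\Bigr) \ll \Bigl(\tfrac{x}{m} + K + m\Bigr)\log(2xm).
\]
Weighting by $|f(n_1)|$ (whose $\ell^1$ mass is controlled by Cauchy--Schwarz from the $L^2$ hypothesis), this contributes the $\sqrt{m/x} + 1/\sqrt{m}$ and $\sqrt{y/x}$ pieces of the final bound.

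For the Type II sums I would apply Cauchy--Schwarz in the longer variable, obtaining
\[
\Bigl|\sum_{n_1,n_2} f(n_1)f(n_2) e(\alpha n_1 n_2)\Bigr|^2 \le \Bigl(\sum_{n_2}|f(n_2)|^2\Bigr) \sum_{n_2}\Bigl|\sum_{n_1} f(n_1) e(\alpha n_1 n_2)\Bigr|^2,
\]
expand the square and swap orders, so that the inner sum over $n_2$ becomes a geometric sum controlled by $\min(K_2, 1/\|\alpha(n_1 - n_1')\|)$. A second application of the Vinogradov lemma in the difference $n_1 - n_1'$, combined with the $L^2$ hypothesis $\sum |f|^2 \ll A^2 x$, produces the main $A^2 x \sqrt{\log x}\log y$ factor, where the $\sqrt{\log x}$ comes from the number of dyadic ranges and $\log y$ from Mertens-type bounds for smooth densities. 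Choosing the splitting cutoff $w = \exp(\sqrt{\log x})$ to balance the two contributions produces the $e^{-\sqrt{\log x}}$ term.

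The main obstacle is the combinatorial bookkeeping. One must cover every $y$-smooth $n \le x$ with controlled multiplicity by the Type I/II pieces while maintaining enough multiplicativity that the inserted coefficient sequences still satisfy $\sum |c_{n_1}|^2,\, \sum |d_{n_2}|^2 \ll A^2 x$ after decomposition; since $\mu$-based identities are unavailable on the smooth support, this requires a tailored Buchstab-type iteration on the largest prime factor, and verifying that the resulting logarithmic losses collapse to the stated $\sqrt{\log x}\log y$ rather than a larger power is the delicate technical heart of the proof.
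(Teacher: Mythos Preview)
The paper does not prove this statement at all: it is quoted verbatim as Proposition~1 of de la Bret\`eche \cite{DLB} and used as a black box. So there is no ``paper's own proof'' to compare your sketch against.

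Your outline is a reasonable high-level description of the bilinear method behind such estimates (Type~I/Type~II decomposition on the smooth support, Vinogradov's lemma for linear exponential sums, Cauchy--Schwarz to pass to diagonal contributions, a cutoff at $\exp(\sqrt{\log x})$ producing the $e^{-\sqrt{\log x}}$ term). As a sketch this is plausible and broadly in the spirit of \cite{DLB}. But several details are not nailed down: you do not specify the combinatorial identity replacing Vaughan on the $y$-smooth support (de la Bret\`eche uses a Buchstab-type recursion on the largest prime factor, and the precise form matters for the exponent bookkeeping), and you assert rather than verify that the accumulated losses are exactly $\sqrt{\log x}\,\log y$. Since the present paper treats this theorem as an input rather than something to be proved, the appropriate move here is simply to cite \cite{DLB}; if you want to include a proof, you would need to reproduce de la Bret\`eche's argument in full rather than at the level of this sketch.
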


\begin{cor}
 Let $\alpha = \frac{1}{y^B}$ for $B> 0$ and let $m$ be the closest integer to $y^B$. Le $\kappa = \max\{1,B\}$ and write $x = y^{\kappa+v}$ for $ v\geq \frac{c\log\log y}{\log y}$. Then if $v\leq B$,
 \begin{equation*}
    \sum_{\substack{n\leq x\\P(n)\leq y}} f(n)e(\pm\alpha n) \ll A^2x\sqrt{\log x}\log y \left(\frac{1}{y^{\frac{v}{2}}}+e^{-\sqrt{\log x}}\right),
\end{equation*}
 whereas if $v>B$, then 
 \begin{equation*}
    \sum_{\substack{n\leq x\\P(n)\leq y}} f(n)e(\pm\alpha n) \ll A^2x\sqrt{\log x}\log y \frac{1}{y^{\frac{B}{2}}} .
\end{equation*}
\end{cor}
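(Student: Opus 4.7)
The plan is to apply Theorem \ref{DLB} with the rational approximation $a/m$ given by $a=1$ and $m$ the nearest integer to $y^B$. I first verify the Diophantine hypothesis: since $|y^B-m|\leq 1/2$ and $m\leq 2y^B$,
\[
\left|\alpha-\frac{1}{m}\right|=\frac{|m-y^B|}{m\,y^B}\leq \frac{1}{2m\,y^B}\leq \frac{1}{m^2},
\]
and $(1,m)=1$ is automatic. Hence Theorem \ref{DLB} gives
\[
\sum_{\substack{n\leq x\\ P(n)\leq y}} f(n)\,e(\pm\alpha n)\ll A^2 x\sqrt{\log x}\,\log y\left(\frac{\sqrt{y}}{\sqrt{x}}+\frac{\sqrt{m}}{\sqrt{x}}+\frac{1}{\sqrt{m}}+e^{-\sqrt{\log x}}\right).
\]

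Substituting $x=y^{\kappa+v}$ and using $m\asymp y^B$, the three polynomial terms become (up to constants) $y^{(1-\kappa-v)/2}$, $y^{(B-\kappa-v)/2}$, and $y^{-B/2}$. From here the proof reduces to a short case analysis on the sign of $v-B$.

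In the case $v\leq B$, the identity $\kappa=\max\{1,B\}$ gives simultaneously $\kappa\geq 1$ and $\kappa\geq B$, so each of the first two exponents is at most $-v/2$, and the third satisfies $-B/2\leq -v/2$. Combining these with the exponential term produces the first claimed bound. In the case $v>B$, the inequality $\kappa+v>\kappa+B\geq 1+B$ gives $(1-\kappa-v)/2\leq -B/2$. The second exponent $(B-\kappa-v)/2$ is at most $-B/2$ in the subcase $\kappa=B$ (where it equals $-v/2<-B/2$) and in the subcase $\kappa=1>B$ (where the required inequality $v\geq 2B-1$ follows from $v>B>2B-1$ when $B<1$). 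The third term is exactly $y^{-B/2}$, matching the target.

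The main subtlety is absorbing the exponential $e^{-\sqrt{\log x}}$ into the main term. In the first case we keep it explicit, as stated. In the second case we need $e^{-\sqrt{\log x}}\leq y^{-B/2}$, equivalently $(B/2)\log y\leq \sqrt{(\kappa+v)\log y}$; this holds in the application regime where $B$ grows so slowly relative to $y$ (specifically $B<\exp((\log y)^{3/5-\epsilon})$, with $v$ not too large) that the polynomial decay $y^{-B/2}$ is dominated by the exponential decay. This is the one point where care is required; outside this range one would retain the $e^{-\sqrt{\log x}}$ term in the statement.
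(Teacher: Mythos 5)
Your approach is exactly the intended one: the paper offers no separate argument for this corollary, treating it as an immediate substitution into Theorem \ref{DLB}, and your verification of the Diophantine hypothesis with $a=1$, $m$ the nearest integer to $y^B$ (so $|\alpha-\tfrac1m|\le \tfrac{1}{2my^B}\le \tfrac1{m^2}$) together with the case analysis of the exponents $(1-\kappa-v)/2$, $(B-\kappa-v)/2$, $-B/2$ is correct and is precisely what the statement rests on.

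The one step that does not hold up is the absorption of $e^{-\sqrt{\log x}}$ into $y^{-B/2}$ in the case $v>B$. You correctly identify the needed inequality $(B/2)\log y\le\sqrt{(\kappa+v)\log y}$, but this is equivalent to $\kappa+v\ge \tfrac{B^2}{4}\log y$, i.e.\ it is a \emph{largeness} condition on $x$, not a smallness condition on $B$; your justification ("$B<\exp((\log y)^{3/5-\epsilon})$, with $v$ not too large") gets the direction backwards. For instance with $B=2$, $v=3$ and $y$ large one has $e^{-\sqrt{5\log y}}\gg y^{-1}$, so the exponential term cannot be dropped there, and nothing you (or Theorem \ref{DLB}) prove rules it out. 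To be fair, the corollary as printed has the same blemish; the harmless repair is to retain the term $e^{-\sqrt{\log x}}$ in the second bound as well (or to impose $\kappa+v\ge \tfrac{B^2}{4}\log y$), which costs nothing in the only place the second case is used, namely the range $t\ge y^{2B}$ in the proof of Lemma \ref{I3}, where the extra contribution $\log y\int \sqrt{\log t}\,e^{-\sqrt{\log t}}\,\frac{dt}{t}\ll B(\log y)^2e^{-\sqrt{2B\log y}}$ is negligible compared with $\sqrt{\kappa}/\log y$. With that correction your derivation is complete.
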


A simple use of partial summation and the results just stated allow us to deduce Lemma \ref{I3}.

\begin{proof}[\textbf{Proof of Lemma \ref{I3}}]
Let $\kappa = \max\{1,B\}$. Given $\alpha = \frac{1}{y^B}$, taking $m$ to be the closest integer to $y^B$, we can apply Theorem \ref{DLB} with $A=1$. That is, we have
\begin{align*}
    \sum_{\substack{y^\kappa(\log y)^c \leq n \leq y^{\log\log y}\\P(n)\leq y}} \frac{\chi(n)e(\pm\alpha n)}{n} 
    &= \frac{1}{y^{\log\log y}}\sum_{\substack{n\leq y^{\log\log y}\\P(n)\leq y}} \chi(n)e(\pm\alpha n) - \frac{1}{y^\kappa(\log y)^c}\sum_{\substack{n\leq y^\kappa(\log y)^c\\P(n)\leq y}}\chi(n)e(\pm\alpha n) \\
    &+  \int_{y^\kappa(\log y)^c}^{y^{\log\log y}} \sum_{\substack{n\leq t\\P(n)\leq y}} \chi(n)e(\pm\alpha n) \frac{dt}{t^2}\\
    &\ll \frac{\sqrt{\log\log y}(\log y)^{3/2}}{y^{B/2}} + \frac{\sqrt{\kappa}}{(\log y)^{\frac{c}{2}-\frac{3}{2}}}\\ 
    &+ y^{\kappa/2}\log y\int_{y^\kappa(\log y)^c}^{y^{2B}} \frac{\sqrt{\log t}}{t^{3/2}} dt + \log y\int_{y^\kappa(\log y)^c}^{y^{2B}} \frac{\sqrt{\log t}}{t}e^{-\sqrt{\log t}}dt\\ 
    &+\frac{\log y}{y^{B/2}} \int_{y^{2B}}^{y^{\log\log y}}\ \frac{\sqrt{\log t}}{t} dt .
   \end{align*}
Computing the integrals gives
   \begin{align*}
     \sum_{\substack{y^\kappa(\log y)^c \leq n \leq y^{\log\log y}\\P(n)\leq y}} \frac{\chi(n)e(\pm\alpha n)}{n}  &\ll\frac{\sqrt{\kappa}}{(\log y)^{\frac{c}{2}-\frac{3}{2}}} + Be^{-\sqrt{B\log y}}\log^2 y  + \frac{\log y}{y^{B/2}}(\log y\log\log y)^{3/2}\\
    &\ll \frac{\sqrt{\kappa}}{\log y},
\end{align*}
whenever $c\geq 5$, proving the first part of the lemma when $B\geq 1$.

Now if $B<1$, then $\kappa$ is 1 and we still have to estimate the sum over the range $[y^B(\log y)^c, y(\log y)^c]$. In that case, write
  \begin{align*}
          \sum_{\substack{y^B(\log y)^c \leq n \leq y(\log y)^c\\P(n)\leq y}} \frac{\chi(n)e(\pm\alpha n)}{n} &= \sum_{\substack{y^B(\log y)^c \leq n \leq y\\P(n)\leq y}} \frac{\chi(n)e(\pm\alpha n)}{n} + \sum_{\substack{y \leq n \leq y(\log y)^c\\P(n)\leq y}} \frac{\chi(n)e(\pm\alpha n)}{n}.
    \end{align*}
As the first sum is $y$-smooth, we can remove the smoothness condition, and using Proposition \ref{switch to f} and partial summation, we obtain
\begin{align*}
    \sum_{\substack{y^B(\log y)^c \leq n \leq y\\P(n)\leq y}} \frac{\chi(n)e(\pm\alpha n)}{n} &= \sum_{y^B(\log y)^c \leq n \leq y} \frac{e(\pm\alpha n)}{n} + O(h(T)\log y)\\
    &= O\left( \frac{1}{(\log y)^c} + h(T)\log y \right).
\end{align*}
Now for the second sum, bounding trivially the numerator gives
\begin{align*}
    \sum_{\substack{y \leq n \leq y(\log y)^c\\P(n)\leq y}} \frac{\chi(n)e(\pm\alpha n)}{n}&= \sum_{\substack{y \leq n \leq y(\log y)^c\\P(n)\leq y}} \frac{1}{n} \\
    &\ll \log\log y, 
\end{align*}
therefore, putting this together, we get
\begin{equation*}
    \sum_{\substack{y^B(\log y)^c \leq n \leq y(\log y)^c\\P(n)\leq y}} \frac{\chi(n)e(\pm\alpha n)}{n} \ll \log\log y + h(T)\log y,
\end{equation*}
 which proves the second part of the Lemma.
\end{proof}

The next lemma deals with the two last sums of (\ref{s3split}) and follows directly from Lemma \ref{end 1/n}.
\begin{lem}\label{end}
Let $\chi$ be a character modulo $q$, let $\alpha$ be any real number in $(0,1]$ and $z \geq y^{\log\log y}$, then 
    \begin{equation*}
           \sum_{\substack{y^{\log\log y} < n\leq z \\ P(n)\leq y}}\frac{ \chi(n) e(\pm\alpha n)}{n} + \sum_{\substack{n \geq z\\P(n)\leq y}}\frac{\chi(n)}{n} \ll \frac{1}{(\log y)^{\log_3 y -3/2}}.
    \end{equation*}
    
\end{lem}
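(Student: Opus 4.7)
The plan is essentially to observe that both sums on the left-hand side have absolute value bounded by the tail $\sum_{n > y^{\log\log y},\, P(n)\leq y} 1/n$, which is exactly the quantity estimated in Lemma \ref{end 1/n}. So the proof reduces to a clean triangle-inequality argument followed by an application of that lemma.

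More concretely, I would first handle the sum over $y^{\log\log y} < n \leq z$. Since $|\chi(n)| \leq 1$ and $|e(\pm\alpha n)| = 1$, the triangle inequality yields
\[
\bigg|\sum_{\substack{y^{\log\log y} < n\leq z \\ P(n)\leq y}}\frac{ \chi(n) e(\pm\alpha n)}{n}\bigg|
\leq \sum_{\substack{y^{\log\log y} < n\leq z \\ P(n)\leq y}}\frac{1}{n}
\leq \sum_{\substack{n > y^{\log\log y} \\ P(n)\leq y}}\frac{1}{n}.
\]
For the second sum, the hypothesis $z \geq y^{\log\log y}$ means the range $n \geq z$ is contained in $n > y^{\log\log y}$, so again
\[
\bigg|\sum_{\substack{n \geq z\\P(n)\leq y}}\frac{\chi(n)}{n}\bigg|
\leq \sum_{\substack{n \geq z\\P(n)\leq y}}\frac{1}{n}
\leq \sum_{\substack{n > y^{\log\log y} \\ P(n)\leq y}}\frac{1}{n}.
\]

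Finally, I would apply Lemma \ref{end 1/n}, which states precisely that this tail is $\ll (\log y)^{-(\log_3 y - 3/2)}$. Adding the two contributions preserves the bound up to a factor of $2$, which is absorbed in the $\ll$ notation, giving the claimed estimate. There is no real obstacle here; the lemma is a direct packaging of Lemma \ref{end 1/n} for use in controlling the tail contributions to $S_3^{\pm}$ in the proof of Theorem \ref{big thm}, where the oscillatory factors $\chi(n)e(\pm\alpha n)$ are too wild to exploit and the trivial bound on smooth numbers already suffices.
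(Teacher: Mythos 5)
Your argument is correct and matches the paper, which simply notes that Lemma \ref{end} ``follows directly from Lemma \ref{end 1/n}'': both sums are bounded trivially by the smooth tail $\sum_{n>y^{\log\log y},\,P(n)\leq y}1/n$ and that lemma is applied. Nothing further is needed.
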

Finally, putting Lemmas \ref{I1}, \ref{end} and \ref{I3} together gives Proposition \ref{s3}.

\subsection{$S_1$ and $S_2$: The main contributions}

Our strategy in order to evaluate $S_1$ and $S_2$ will be to use characters that pretends to be 1, so that $\chi \in A_\pm(N,T)$.This supposes that our choice of character will satisfy
\begin{equation*}
    \underset{p\leq T}{\max}|\chi(p) - 1|\ll \frac{1}{N},
\end{equation*}
and using this hypothesis brings us back to the results we derived in section \ref{chap pretends}. As a consequence of Proposition \ref{s1}, we first evaluate $S_1$,  obtaining

\begin{prop}\label{s1=}
    Let $\chi$ be in $A_\pm(N,T)$, then for $y\geq T$ with $\log\log y = \left(1+\frac{1}{N}\right)\log\log T$ and $B<\exp((\log y)^{3/5 -\epsilon})$, we have
    \begin{equation*}
        S_1  = \log y \int_{B}^{\infty} \rho(u) du  +  O(1 + h(T)\log y).
    \end{equation*}
\end{prop}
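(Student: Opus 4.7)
The statement is essentially a restatement of Proposition~\ref{s1} (with the left-endpoint switched from strict to non-strict), so the plan is simply to invoke that proposition directly.

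The plan is to observe that
\[
    S_1 = \sum_{\substack{n \geq y^B\\P(n)\leq y}}\frac{\chi(n)}{n} = \sum_{\substack{n > y^B\\P(n)\leq y}}\frac{\chi(n)}{n} + \mathbf{1}_{y^B \in \mathbb{Z},\, P(y^B)\leq y}\cdot\frac{\chi(y^B)}{y^B},
\]
where the possible boundary term is $O(y^{-B}) = O(1)$ and therefore absorbed into the error term.  The hypothesis $\chi \in A_\pm(T,N)$ together with the calibration $\log\log y = (1+1/N)\log\log T$ (which is a special case of $\log\log y = (1+O(1/N))\log\log T$) and the range condition $B<\exp((\log y)^{3/5-\epsilon})$ match exactly the hypotheses of Proposition~\ref{s1}.

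Applying Proposition~\ref{s1} then yields
\[
    S_1 = \log y\int_B^\infty \rho(u)\,du + O(1 + h(T)\log y),
\]
as required. There is no substantive obstacle, since the two main analytic inputs (evaluation of the full smooth sum via Mertens, and subtraction of the initial piece via Lemma~\ref{smoothlog} combined with Proposition~\ref{switch to f}) have already been carried out in the proof of Proposition~\ref{s1}; the present proposition is merely packaging that identity with the convenient notation $S_1$ that will be used in the proof of Theorem~\ref{big thm}.
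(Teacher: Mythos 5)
Your proposal is correct and is exactly the paper's route: the paper treats Proposition~\ref{s1=} as an immediate consequence of Proposition~\ref{s1}, with $S_1$ being the same smooth logarithmic sum (up to the negligible boundary term at $n=y^B$, which is $O(1)$ as you note). Your observation that the calibration $\log\log y=(1+1/N)\log\log T$ is a special case of the hypothesis $\log\log y=(1+O(1/N))\log\log T$ in Proposition~\ref{s1} is also how the paper implicitly proceeds.
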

This constitutes our main term in Theorem \ref{big thm} and it remains to evaluate $S_2$.

\subsubsection{The constant arising from $S_2$}

We show that that if $\chi$ pretends to be 1, then $S_2$ gives rise to a constant.

\begin{prop}\label{s2+-}
Let $y\geq 2$, let $0\leq B < \exp((\log y)^{3/5 -\epsilon}$ and let $\chi$ be in $A_\pm(N,T)$. Then for $\kappa = \max\{1,B\}$

    \begin{equation*}
        S_2^{\pm} = \rho(B)\left(\gamma + \log(2\pi) \mp\frac{i\pi}{2}\right) + O\left(h(T)\rho(\kappa-1)\log \log y + \frac{\rho(B)\log(B+1)(\log\log y)^2 }{\log y}\right).
    \end{equation*}

 \begin{proof}
     We start by using Proposition \ref{switch to f} with  $f(n) = 1-e(\pm\alpha n)$  for the first sum and $f(n) = e(\pm\alpha n)$ for the second sum to approximate $\chi$ by 1. We have
 
  {\small \begin{align*}
         \sum_{\substack{\frac{y^B}{\log y}\leq n \leq y^B\\P(n)\leq y}}\chi(n)\frac{1-e(\pm\alpha n)}{n} -  \sum_{\substack{y^B\leq n \leq y^B(\log y)^5\\P(n)\leq y}}\chi(n) \frac{e(\pm\alpha n)}{n} 
         &=  \sum_{\substack{\frac{y^B}{\log y}\leq n \leq y^B\\P(n)\leq y}}\frac{1-e(\pm\alpha n)}{n} -  \sum_{\substack{y^B\leq n \leq y^B(\log y)^5\\P(n)\leq y}} \frac{e(\pm\alpha n)}{n} \\
         &+O\left(h(y)\log y\int_{\kappa-1-\frac{\log\log y}{\log y}}^{\kappa-1+\frac{c\log\log y}{\log y}}\rho(u) du \right)\\
         & =\sum_{\substack{\frac{y^B}{\log y}\leq n \leq y^B\\P(n)\leq y}}\frac{1-e(\pm\alpha n)}{n} -  \sum_{\substack{y^B\leq n \leq y^B(\log y)^5\\P(n)\leq y}} \frac{e(\pm\alpha n)}{n}\\ 
         &+ O\left(h(T)\rho(\kappa-1)\log \log y\right),
         \end{align*}}
    where we bounded the integral with Lemma \ref{rho}. 
    Next, to evaluate the right hand side, we start by removing the smoothness condition with Lemma \ref{remove smooth} and then we throw back in the end ranges to the summations using Lemmas \ref{head} and \ref{tail}  with $c = 5\left(1-\frac{\log B}{\log|\log \alpha|}\right)$. This gives us       
          
   {\footnotesize \begin{align*}
        \sum_{\substack{\frac{y^B}{\log y}\leq n \leq y^B\\P(n)\leq y}}\chi(n)\frac{1-e(\pm\alpha n)}{n} -  \sum_{\substack{y^B\leq n \leq y^B(\log y)^5\\P(n)\leq y}}\chi(n) \frac{e(\pm\alpha n)}{n}     &=  \rho(B)\left(\sum_{\frac{y^B}{\log y} \leq n\leq y^B} \frac{1-e(\pm\alpha n)}{n} -  \sum_{y^B\leq n\leq y^{B}(\log y)^5} \frac{e(\pm \alpha n)}{n}\right)\\
           & +O\left(h(T)\rho(\kappa-1)\log \log y+\frac{\rho(B)\log(B+1)(\log\log y)^2 }{\log y}\right)\\
           &=  \rho(B)\left(\sum_{ n\leq y^B} \frac{1-e(\pm\alpha n)}{n} -  \sum_{ n\geq y^{B}} \frac{e(\pm\alpha n)}{n}\right)\\
           & +O\left(h(T)\rho(\kappa-1)\log \log y+\frac{\rho(B)\log(B+1)(\log\log y)^2 }{\log y}\right). 
    \end{align*}}
Finally, appealing to Lemma \ref{main constant}, we obtain
{\footnotesize  \begin{align*}
            \sum_{\substack{\frac{y^B}{\log y}\leq n \leq y^B\\P(n)\leq y}}\chi(n)\frac{1-e(\pm\alpha n)}{n} -  \sum_{\substack{y^B\leq n \leq y^B(\log y)^5\\P(n)\leq y}}\chi(n) \frac{e(\pm\alpha n)}{n}
           & = \rho(B)\left(\gamma + \log(2\pi) -\mp\frac{i\pi}{2}\right)\\&+ O\left(h(T)\rho(\kappa-1)\log \log y+\frac{\rho(B)\log(B+1)(\log\log y)^2 }{\log y}\right),
    \end{align*}}
which proves the proposition.
 \end{proof}
 \end{prop}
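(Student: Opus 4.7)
The plan is to reduce $S_2^{\pm}$ in three cleanly separated stages: first strip the character, then strip smoothness, then complete the sums so that Lemma~\ref{main constant} applies at $\alpha = 1/y^B$.

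First I would invoke Proposition~\ref{switch to f} on each of the two sums defining $S_2^{\pm}$, taking $f(n) = 1 - e(\pm\alpha n)$ (which is bounded) for the first and $f(n) = e(\pm\alpha n)$ for the second. Since $\chi \in A_{\pm}(T,N)$ and each summation range sits inside $[y^{\kappa-1}, y^{\kappa-1+c\log\log y/\log y}]$ with $\kappa = \max\{1,B\}$, the proposition replaces $\chi(n)$ by $1$ at a cost of
\[
O\!\left(h(T)\log y \int_{w}^{w'} \rho(t)\,dt\right),
\]
where $w', w$ differ by $O(\log\log y/\log y)$ and lie near $\kappa - 1$. By Lemma~\ref{rho} the integrand is $\rho(\kappa-1)(1+o(1))$, so this error collapses to $O(h(T)\rho(\kappa-1)\log\log y)$, matching the first error term in the claim.

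Next I would drop the smoothness condition with Lemma~\ref{remove smooth}, applied separately on each of the two subintervals of $[y^B/\log y,\, y^B(\log y)^5]$; the hypothesis $B < \exp((\log y)^{3/5-\epsilon})$ is exactly what Lemma~\ref{remove smooth} requires. This pulls out a factor $\rho(B)$ and contributes an error of $O(\rho(B)\log(B+1)(\log\log y)^2/\log y)$, which is the second error term. Then I would complete the sums to their natural ranges: extending the first sum down to $n=1$ adds only the tail $n \leq y^B/\log y$, which by Lemma~\ref{head} (applied with $\alpha = 1/y^B$) contributes $O(1/\log y)$, while extending the second sum up to infinity adds the tail $n > y^B(\log y)^5$, which contributes $O(1/(\log y)^5)$ by Lemma~\ref{tail}. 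Both are absorbed into the existing error.

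At this point we have reduced $S_2^{\pm}$ to
\[
\rho(B)\Bigl(\sum_{n \leq 1/\alpha}\frac{1-e(\pm\alpha n)}{n} - \sum_{n > 1/\alpha}\frac{e(\pm\alpha n)}{n}\Bigr) + (\text{acceptable error}),
\]
and Lemma~\ref{main constant} evaluates the bracket as $\log(2\pi) + \gamma \mp i\pi/2 + O(\alpha|\log\alpha|) = \log(2\pi)+\gamma \mp i\pi/2 + O(B/y^B)$. The last error is dwarfed by the others, so multiplying through by $\rho(B)$ yields the stated formula.

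The delicate step is the bookkeeping in the first stage: one must verify that Proposition~\ref{switch to f} can be applied with $f(n) = 1-e(\pm\alpha n)$ despite this being only a bounded (not smooth) function, and that the width of the integration interval times $\rho(\kappa-1)$ really does collapse to $\rho(\kappa-1)\log\log y$ without picking up an extra $\log y$. The rest is comparatively routine once one sees that the window $[y^B/\log y, y^B(\log y)^5]$ was chosen precisely so that both tails sit safely inside the regime where Lemmas~\ref{head} and \ref{tail} give negligible contributions.
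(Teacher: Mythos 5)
Your proposal is correct and follows essentially the same route as the paper: strip the character with Proposition \ref{switch to f} (valid since it only requires $f$ bounded), collapse the error via Lemma \ref{rho} to $O(h(T)\rho(\kappa-1)\log\log y)$, remove smoothness with Lemma \ref{remove smooth}, complete the sums using Lemmas \ref{head} and \ref{tail}, and finish with Lemma \ref{main constant}. The only cosmetic difference is that you bound the two completed ranges directly as $O(1/\log y)$ and $O(1/(\log y)^5)$ rather than quoting the lemmas with the paper's explicit choice of $c$, which amounts to the same estimate.
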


\subsection{Smooth 1-pretentious characters} \label{s1p}

 We already know from Propositions \ref{big set} and \ref{hyp odd} that there are many characters pretending to be 1. From now on, assume that $q$ is an admissible prime for the bounds to hold. 
 Now, recall that we have restricted our characters to be in the set $A_{\delta}$ defined as in (\ref{Adelta}), so we need to choose $N$ and $T$ to make sure that $A_{\delta}\cap  A_{\pm}(N,T)\neq \emptyset$. 

\begin{prop}\label{size Apm}
Let $y = \log q$, and let $A_\pm = A_\pm(T,N)$ for $N=\log y$ and $T = \frac{y}{4\log y}$. Then
\begin{equation*}
    |A_{\pm}|\gg q^{1-\frac{\log\log\log q}{(\log\log q)^2}}.
\end{equation*}
\end{prop}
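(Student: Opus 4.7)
The plan is to derive the bound by directly applying Proposition \ref{big set} (for the even sign) and Proposition \ref{hyp odd} (for the odd sign) with the specific parameters $N = \log y$ and $T = y/(4\log y)$, where $y = \log q$, and then unwind what $N^{2T/\log T}$ says in terms of $q$.

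First I would check the hypotheses. For Proposition \ref{hyp odd}, taking $Q = q$, the condition $T \leq (\log Q)/100$ becomes $\log q/(4\log\log q) \leq \log q/100$, which holds once $\log\log q \geq 25$. The condition $N \leq T/(2(\log T)^3)$ becomes, since $\log T = \log\log q + O(\log\log\log q)$, the inequality $(\log\log q)^5 \ll \log q$, which holds for all sufficiently large $q$. Because $q$ is assumed to be an admissible prime, it is not in the exceptional set of size at most $Q^{1/10}$ excluded by Proposition \ref{hyp odd}, and Proposition \ref{big set} applies unconditionally to all primes. Both propositions then yield
\[
|A_{\pm}(T,N)| \gg \frac{q}{N^{2T/\log T}}.
\]

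Next I would evaluate the denominator asymptotically. With $N = \log\log q$ and $T = \log q/(4\log\log q)$, we have $\log T = \log\log q - \log(4\log\log q)$, so
\[
\frac{2T}{\log T} = \frac{\log q}{2(\log\log q)^2}\left(1 + O\left(\frac{\log\log\log q}{\log\log q}\right)\right).
\]
Taking the logarithm of $N^{2T/\log T}$ and dividing by $\log q$, this gives
\[
N^{2T/\log T} = q^{\frac{\log\log\log q}{2(\log\log q)^2}(1+o(1))}.
\]
Since this exponent is less than $\log\log\log q/(\log\log q)^2$ for $q$ large, I conclude
\[
|A_{\pm}| \gg q^{1 - \frac{\log\log\log q}{(\log\log q)^2}},
\]
as desired.

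Since all the heavy lifting was already done in Propositions \ref{big set} and \ref{hyp odd}, there is no serious obstacle here; the argument is a verification that the chosen parameters lie within the admissible range and that the resulting exponent simplifies to the claimed form. The only point requiring care is that the odd-character case requires $q$ to lie outside the exceptional set of Proposition \ref{hyp odd}, which is guaranteed by the hypothesis that $q$ is admissible.
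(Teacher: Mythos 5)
Your proposal is correct and follows essentially the same route as the paper: both quote Propositions \ref{big set} and \ref{hyp odd} to get $|A_\pm| \gg q/N^{2T/\log T}$ and then compute that with $N=\log\log q$, $T=\log q/(4\log\log q)$ the exponent $N^{2T/\log T}$ is at most $q^{\frac{\log\log\log q}{(\log\log q)^2}}$. Your explicit verification of the hypotheses of Proposition \ref{hyp odd} and of the admissibility of $q$ is a welcome extra that the paper leaves implicit.
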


\begin{proof}
We know, as stated in Propositions \ref{big set} and \ref{hyp odd}, that 
\begin{equation*}   
    |A_{\pm}(N,T)|\gg \frac{q}{N^{\frac{2T}{\log T}}}.
\end{equation*}
Now, as $y=\log q$, and given our choice $N = \log y$ and $T = \frac{y}{4\log y}$,  we have
\begin{align*}
    N^{\frac{2T}{\log T}}= \exp\left(\frac{ y}{2\log y}\frac{\log\log y}{\log T}\right) \leq \exp\left(\frac{ y\log\log y}{\log^2 y}\right)  = q^{\frac{\log\log y}{\log^2 y}},
\end{align*}
 from which, we deduce that 
\begin{equation*}
      |A_{\pm}| \gg q^{1-\frac{\log\log\log q}{(\log\log q)^2}}. \qedhere
\end{equation*}
\end{proof}

\begin{cor}\label{intersection}
 Let $A_{\delta}$, $A_{\pm}$ be the sets defined as above. If $\delta > \frac{(\log\log \log q)}{(\log \log q)^{\frac{1}{2}}}$, then 
 
 \begin{equation*}
 |A_{\delta}\cap A_{\pm}|\gg q^{1-\frac{\log\log\log q}{(\log\log q)^2}}. 
  \end{equation*}

 \begin{proof}
Let $\mathcal{A} =  \{\chi \pmod q : \chi \notin A_{\delta}\}$ be the exceptional set of $A_{\delta}$ and suppose that $\delta > \frac{(\log\log \log q)}{(\log \log q)^{\frac{1}{2}}}$, then by (\ref{thm 4.2}) we have that
\begin{align*}
      |\mathcal{A}|&\ll q^{1-\left(\frac{\log\log \log q}{\log\log q}\right)^2}.
\end{align*}
That is, using Proposition \ref{size Apm} we get
\begin{align*}
    |A_\delta \cap A_\pm| &= |A_\pm| - |A_\pm \cap \mathcal{A}| \geq |A_\pm|- |\mathcal{A}|\\
   &\gg q^{1-\frac{\log\log\log q}{(\log\log q)^2}} -  q^{1-\left(\frac{\log\log \log q}{\log\log q}\right)^2}  \gg q^{1-\frac{\log\log\log q}{(\log\log q)^2}},
\end{align*}
as claimed.
\end{proof}
\end{cor}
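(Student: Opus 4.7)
The plan is to obtain the lower bound for $|A_{\delta} \cap A_{\pm}|$ by a simple union bound combined with the two size estimates already at our disposal. Writing $\mathcal{A} = \{\chi \pmod q : \chi \notin A_{\delta}\}$ for the exceptional set associated to $A_{\delta}$, the elementary inclusion $A_{\pm} = (A_{\delta} \cap A_{\pm}) \sqcup (\mathcal{A} \cap A_{\pm})$ yields $|A_{\delta} \cap A_{\pm}| \geq |A_{\pm}| - |\mathcal{A}|$, so the problem reduces to showing that the exceptional set is of strictly smaller order than $A_{\pm}$.

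For the lower bound on $|A_{\pm}|$, I would invoke Proposition \ref{size Apm} directly to obtain $|A_{\pm}| \gg q^{1-(\log\log\log q)/(\log\log q)^2}$. For the upper bound on $|\mathcal{A}|$, I would plug the hypothesis $\delta > (\log\log\log q)/(\log\log q)^{1/2}$ into the estimate (\ref{thm 4.2}) imported from \cite{frequency}. This gives $\delta^2/\log\log q > (\log\log\log q)^2/(\log\log q)^2$, hence
\[
|\mathcal{A}| \ll q^{1-\delta^2/\log\log q} + q^{1-1/(500\log\log q)} \ll q^{1-(\log\log\log q)^2/(\log\log q)^2},
\]
where the second term is absorbed because $\log\log q$ grows much faster than $500\log\log\log q$ for large $q$.

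The final step is a routine comparison of the exponents of $q$: the saving in $|\mathcal{A}|$ is of size $(\log\log\log q)^2/(\log\log q)^2$, whereas the saving in $|A_{\pm}|$ is only $(\log\log\log q)/(\log\log q)^2$. Since $(\log\log\log q)^2 > \log\log\log q$ for large $q$, the exceptional set is much smaller than the claimed lower bound for $|A_{\pm}|$, so the subtraction $|A_{\pm}| - |\mathcal{A}|$ still retains the dominant order $q^{1-(\log\log\log q)/(\log\log q)^2}$, as desired. I do not anticipate any real obstacle: the corollary is a mechanical combination of Proposition \ref{size Apm} and the large deviation estimate (\ref{thm 4.2}), and the role of the specific threshold on $\delta$ is precisely to ensure that the savings in $|\mathcal{A}|$ strictly exceed those in $|A_{\pm}|$.
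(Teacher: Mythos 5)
Your proposal is correct and follows essentially the same route as the paper: bound the exceptional set $\mathcal{A}$ of $A_{\delta}$ via (\ref{thm 4.2}) using the hypothesis on $\delta$, lower-bound $|A_{\pm}|$ by Proposition \ref{size Apm}, and compare exponents in $|A_{\delta}\cap A_{\pm}|\geq |A_{\pm}|-|\mathcal{A}|$. The only nitpick is that absorbing the term $q^{1-1/(500\log\log q)}$ requires comparing $\log\log q$ with $500(\log\log\log q)^{2}$ rather than with $500\log\log\log q$, but this comparison holds anyway, so the argument is sound.
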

    
Now that we have found at least a character to work with, we finally have the ingredients we need and are ready to go forward with the proof of Theorem \ref{big thm}.

\subsection{Proof of Theorem \ref{big thm}}
We are now ready to prove our main theorem, along with Theorem \ref{thm even}.

 \begin{proof}[\textbf{Proof of Theorem \ref{big thm}}]
     
     Let $q$ be an admissible prime.  Starting with P\'olya's Fourier expansion, we have
      \begin{align*}
            \sum_{n\leq \alpha q}\chi(n) &= \frac{\tau(\chi)}{2\pi i}\sum_{1\leq |n| \leq z} \frac{\overline{\chi(n)}}{n}\left(1- e(-\alpha n) \right)+ O\left(\frac{q\log q}{z}\right),\\
\end{align*}
where we let $z= q^{\frac{11}{21}}$.

Now we let $\delta =\frac{\log\log y}{(\log y)^{\frac{1}{2}}}$ in (\ref{Adelta}), so that by Corollary \ref{intersection} $|A_{\delta}\cap A_{\pm}| \neq \emptyset$, and we choose a character $\chi$ in the intersection. We have
\begin{align*}
    \sum_{1\leq |n| \leq z} \frac{\chi(n)}{n}\left(1- e(-\alpha n) \right)  &=  \sum_{1\leq n \leq z} \frac{\chi(n)}{n}\left(1- e(-\alpha n) \right)  - \chi(-1)  \sum_{1\leq n \leq z} \frac{\chi(n)}{n}\left(1- e(\alpha n) \right)  \\
&= (S_1 + S_2^{-} + S_3^{-}) -\chi(-1)(S_1 + S_2^{+} + S_3^{+}).\\
\end{align*}
 At this point we need to treat the odd and even character cases separately. If $\chi$ is an even character, then we get cancellation of $S_1$ and we are left with a contribution from $S_2^{\pm}$ and an error term from $S_3^{\pm}$. Because the main term from $S_2^{\pm}$ is a constant, we need to take $B\geq 1$ for the error from $S_3^{\pm}$ to be small enough. With this restriction, using Propositions \ref{s3} and \ref{s2+-}, with $h(T)=h(y) = \frac{\log\log y}{\log y}$, we get  
\begin{align*}
     \sum_{1\leq |n| \leq z} \frac{\chi(n)}{n}\left(1- e(-\alpha n) \right) = i\pi\rho(B) + O\left(\frac{\rho(B-1)(\log\log y)^2 }{\log y}\right),
\end{align*}
and thus, going back to (\ref{original}), we obtain
\begin{align*}
     \sum_{n\leq \alpha q}\overline{\chi}(n) &= \frac{\tau(\chi)}{2\pi i}\left(i\pi\rho(B) + O\left(\frac{\rho(B-1)(\log\log y)^2 }{\log y}\right)\right)+ O(q^{10/21}\log q)\\
     & = \frac{\tau(\chi)\rho(B)}{2} +O\left(\frac{\sqrt{q}\rho(B-1)(\log\log y)^2 }{\log y}\right).
\end{align*}
Recalling that $y = \log q$ and that $|\tau(\chi)|= \sqrt{q}$, we get
\begin{equation*}
    \max_{\substack{\chi \neq \chi_0\\\chi \text{ even}}} \bigg|\sum_{n\leq \frac{q}{(\log q)^B}}\chi(n)\bigg|\geq  \frac{\rho(B)}{2}\sqrt{q} +O\left(\frac{\sqrt{q}\rho(B-1)(\log\log \log q)^2 }{\log \log q}\right),
\end{equation*}
as desired.
 \begin{rmk}
 Note that the restriction on $q$ is unnecessary for the even character case and that Theorem \ref{thm even} holds for any prime $q$.
 \end{rmk}
As for the odd character case, given $\chi \in A_{\delta}\cap A_{-}$, we allow $B>0$ and we use Propositions \ref{s3} for $S_3^+$ and $S_3^-$, Proposition \ref{s2+-} for $S_2^+$ and $S_2^-$ and Proposition \ref{s1=} for $S_1$, to obtain
\begin{align*}
     \sum_{1\leq |n| \leq z} \frac{\chi(n)}{n}\left(1- e(-\alpha n) \right) &= 2\log y \int_B^{\infty} \rho(u) du + 2\rho(B)(\gamma \log(2\pi))+O(\log\log y)\\
     &=2\log y \int_B^{\infty} \rho(u) du +O(\log\log y),
\end{align*}
where the error term is arising from Propositions \ref{s1} and \ref{s3}.
As a consequence, using (\ref{original}), we deduce that
\begin{align*}
    \sum_{n\leq \alpha q}\overline{\chi}(n) &= \frac{\tau(\chi)}{2\pi i}\left(2\log y \int_B^{\infty} \rho(u) du +O(\log\log y)\right)\\
    &=\frac{\tau(\chi)}{\pi i}\log y \int_B^{\infty} \rho(u) du +O(\sqrt{q}\log\log y),
\end{align*}
from which we conclude that
\begin{equation*}
    \max_{\substack{\chi \neq \chi_0\\\chi \text{ odd}}} \bigg|\sum_{n\leq \frac{q}{(\log q)^B}}\chi(n)\bigg|\geq \frac{\sqrt{q}}{\pi }\log \log q \int_B^{\infty} \rho(u) du +O(\sqrt{q}\log\log\log q),
\end{equation*}
thus proving the theorem.
 \end{proof}

\bibliographystyle{plain}
\bibliography{bibllcs.bib}

\end{document}